\newcommand{\amsprimary}[1]{{\footnotesize\noindent AMS 2010 \textit{Mathematics subject
classification:} Primary #1\vspace{1pc}}}
\newcommand{\keywordsnames}[1]{{\footnotesize\noindent\textit{Key words:} #1\vspace{1pc}}}
\title[Ricci flow on surfaces with boundary]{The Ricci flow on surfaces
with boundary}
\newtheorem{theorem}{Theorem}[section]
\newtheorem{definition}{Definition}[section]
\newtheorem{proposition}{Proposition}[section]
\newtheorem{lemma}{Lemma}[section]
\newtheorem{corollary}{Corollary}[section]
\author{Jean C. Cortissoz
\and Alexander Murcia}
\address{Departamento de Matem\'aticas, Universidad de los Andes, Bogot\'a DC  (Colombia);
\quad  Universidad Santo Tom\'as, Bogot\'a DC  (Colombia)}
\begin{document}
\maketitle
\begin{abstract}
We study a boundary value problem
for the Ricci flow on a surface with boundary, where the geodesic curvature
of the boundary is prescribed. 
\end{abstract}
{\keywordsnames {Ricci flow; surfaces with boundary.}}

{\amsprimary {53C44; 35K55.}}

\section{Introduction}

The Ricci flow on surfaces, compact and noncompact, has been an intense subject of study since the appearance of  
Hamilton's seminal work \cite{Hamilton0}, where the asymptotic 
behavior of the flow is studied on closed surfaces, and it is used as a tool towards giving a proof 
of the Uniformization Theorem via parabolic methods. In addition to its obvious geometric appeal, it is of note
that the study of the Ricci flow on surfaces is related to the study of the logarithmic diffusion equation, and hence, the
interest on this problem goes beyond its geometric applications (see \cite{Hsu}).

However,
not much is known about the behavior of the Ricci flow on manifolds with boundary. One of the main difficulties
in studying this problem
arises from the fact that even trying to impose meaningful boundary conditions
for the Ricci flow, for
which existence and uniqueness results can be proved so interesting geometric
applications can be hoped for, seems to be a challenging task.
For the reader to get an idea of the difficulty of the problem, we recommend the interesting works of Y. Shen 
\cite{Shen}, S. Brendle \cite{Brendle},
A. Pulemotov \cite{Pulemotov} and P. Gianniotis \cite{Gianniotis}. In the case of the
boundary conditions imposed by Shen \cite{Shen}, satisfactory convergence
results have given for manifolds of positive Ricci curvature
and totally geodesic boundary, and also when the boundary is convex
and the
metric is rotationally symmetric. In the case of surfaces, the Ricci flow is parabolic, and imposing
natural geometric boundary conditions is not difficult: one can for instance control the geodesic curvature
of the boundary. In this case, Brendle \cite{Brendle} has shown that when the boundary is totally 
geodesic, then the behavior is completely analogous to the behavior of the Ricci flow in closed 
surfaces (\cite{Hamilton0,Chow0}). In this case, also for non totally geodesic boundary, the first author has proved,
under the hypothesis of rotational symmetry of the metrics involved, 
results on the asymptotic behavior of the Ricci flow in the case of positive curvature
and convex boundary, and for certain families of metrics with non convex boundary (\cite{Cortissoz1}). 

The purpose of this paper 
is to contribute towards the understanding of the behavior of the Ricci flow 
on surfaces with boundary. To 
be more precise, let $M$ be a compact surface with boundary ($\partial M\neq \emptyset$),
endowed with a smooth metric $g_0$; 
we will study the equation
\begin{equation}
\label{flow}
\left\{
\begin{array}{l}
\frac{\partial g}{\partial t}=-R_g g \quad\mbox{in}\quad M\times\left(0,T\right)\\
k_g\left(\cdot,t\right)=\psi\left(\cdot,t\right) \quad \mbox{on} \quad \partial M \times\left(0,T\right)\\
g\left(\cdot,0\right)=g_{0}\left(\cdot\right)\quad \mbox{in}\quad M,
\end{array}
\right.
\end{equation}
where $R_g$ represents the scalar curvature of $M$ and $k_g$ the geodesic curvature of $\partial M$, both
with respect to the time evolving metric $g$, and $\psi$ is a smooth real valued function defined on
$\partial M\times \left[0,\infty\right)$, and which satifies the compatibility condition 
$\psi\left(\cdot,0\right)=k_{g_0}$.

The short-time existence theory of equation (\ref{flow}) is well understood. Indeed, since the 
deformation given by (\ref{flow}) is conformal, if we write $g\left(p,t\right)=e^{u\left(p,t\right)}g_0$,
problem (\ref{flow}) is equivalent to a nonlinear parabolic equation with Robin boundary conditions,
and initial condition $u\left(p,0\right)=1$. Hence, via the Inverse Function Theorem and standard
methods from the theory of parabolic equations \cite{Lady},
it can be shown that (\ref{flow}) has a unique solution for a short time, and that this solution
is in the parabolic
H{\"o}lder space  $H^{2+\alpha,1+\frac{\alpha}{2}}$, $0<\alpha<1$, on $\overline{M}\times \left[0,T\right)$, and smooth away from the corner.
 
Before 
we state the results we intend to prove in this paper, we must introduce a normalization of (\ref{flow}).
As it is well known, the solution to (\ref{flow}) can be normalized to keep the area of the surface constant. This is done as follows:
Let us assume without loss of generality that the area of $M$ with respect to $g_0$ is $2\pi$, and 
choose $\phi\left(t\right)$ such that $\phi\left(t\right) A_g\left(t\right)=2\pi$, 
where $A_g\left(t\right)$ is the area of the surface
at time $t$ with respect to the metric $g$. Then define
\begin{equation}
\label{normalization}
\tilde{t}\left(t\right)=\int_0^t \phi\left(\tau\right)\,d\tau \quad \mbox{and}
\quad \tilde{g}=\phi g.
\end{equation}
If the family of metrics $g\left(t\right)$ satisfies (\ref{flow}), then 
the family of metrics $\tilde{g}\left(\tilde{t}\right)$ satisfies the evolution equation
\begin{equation}
\label{normalizedflow}
\left\{
\begin{array}{l}
\frac{\partial \tilde{g}}{\partial \tilde{t}}=\left(\tilde{r}_{\tilde{g}}-\tilde{R}_{\tilde{g}}\right)\tilde{g} 
\quad\mbox{in}\quad M\times\left(0,\tilde{T}\right)\\
k_{\tilde{g}}\left(\cdot,t\right)=\tilde{\psi}\left(\cdot,\tilde{t}\right) 
\quad \mbox{on} \quad \partial M\times\left(0,\tilde{T}\right)\\
\tilde{g}\left(\cdot,0\right)=g_{0}\left(\cdot\right)\quad\mbox{on}\quad M,
\end{array}
\right.
\end{equation}
where $\tilde{\psi}$ is the normalization of the function $\psi$, $\tilde{R}_{\tilde{g}}$ is the scalar curvature of the metric $\tilde{g}$, and
\[
\tilde{r}_{\tilde{g}}=\frac{\int_{M}\tilde{R}_{\tilde{g}}\,dA_{\tilde{g}}}{\int_{M}\,dA_{\tilde{g}}}
=\frac{1}{2\pi}\int_M\tilde{R}_{\tilde{g}}\,d\tilde{A}_g.
\]
Here $dA_{\tilde{g}}$ denotes
the area element of $M$ with respect to the metric $\tilde{g}$.
We refer to (\ref{normalizedflow}) as the {\it normalized Ricci flow}.

We can now state our first result.
\begin{theorem}
\label{maintheorem}
Let $\left(M^2,g_0\right)$ be a compact surface with boundary with positive scalar curvature
($R_{g_0}>0$), and such that the geodesic curvature of its boundary 
is nonnegative ($k_{g_0}\geq 0$), and assume that $\psi$, as defined above,  is nonnegative and also 
satisfies that $\frac{\partial}{\partial t}\psi\leq 0$.
Let $g\left(t\right)$ be the solution to (\ref{flow}) with initial condition
$g_0$. Then the corresponding solution to the normalized flow, $\tilde{g}\left(\tilde{t}\right)$, exists for all time, and 
for any sequence $\tilde{t}_n\rightarrow \infty$, 
there is a subsequence $\tilde{t}_{n_k}\rightarrow \infty$ such that
the metrics $\tilde{g}\left(\tilde{t}_{n_k}\right)$
converge smoothly to a metric of constant curvature and totally geodesic boundary.
\end{theorem}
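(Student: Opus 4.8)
The plan is to follow the strategy Hamilton used for the $2$-sphere in \cite{Hamilton0} (refined by Chow \cite{Chow0}, and adapted to a totally geodesic boundary by Brendle \cite{Brendle}), carrying each step over to a surface with boundary while keeping careful track of the boundary contributions; the hypotheses $\psi\geq 0$ and $\frac{\partial}{\partial t}\psi\leq 0$ are precisely what makes these contributions have a favourable sign. Since $R_{g_0}>0$, Gauss--Bonnet ($\int_M R_{g_0}\,dA+2\int_{\partial M}k_{g_0}\,ds=4\pi\chi(M)$) together with $k_{g_0}\geq 0$ gives $\chi(M)>0$, so (in the connected case) $M$ is a disk and the target is a round hemisphere. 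Writing $g=e^{u}g_0$, equation \eqref{flow} becomes $\partial_t u=-R$ with the Robin condition $\partial_{\nu_0}u=2\psi e^{u/2}-2k_{g_0}$, and one computes the evolution equations
\[
\partial_t R=\Delta R+R^2\ \text{in }M,\qquad \partial_\nu R=R\psi-2\partial_t\psi\ \text{on }\partial M,
\]
the boundary identity coming from differentiating $k_g=\psi$ in time ($\nu$ the outward $g$-unit normal). Note $\partial_\nu R\geq 0$. For the normalized flow \eqref{normalizedflow} the analogues are $\partial_t R=\Delta R+R(R-r)$ and $\partial_\nu R=(R-r)\psi-2\partial_t\psi$, and the hypotheses on $\psi$ are inherited by $\tilde\psi$ (because $R>0$ forces the area to decrease under \eqref{flow}, so the normalization factor $\phi$ increases and $\tilde\psi=\phi^{-1/2}\psi$).

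First I would show $R>0$ is preserved along \eqref{flow}: at an interior spatial minimum Hamilton's trick gives $\frac{d}{dt}R_{\min}\geq R_{\min}^2$, while a spatial minimum on $\partial M$ is ruled out by the Hopf boundary point lemma, since $\partial_\nu R=R\psi-2\partial_t\psi\geq 0$ there. Hence \eqref{flow} has a finite maximal time $T\leq 1/R_{\min}(0)$. Next, $\frac{d}{dt}A=-\int_M R\,dA=-4\pi+2\int_{\partial M}\psi\,ds$ by Gauss--Bonnet, and the boundary length satisfies $\frac{d}{dt}L=-\tfrac12\int_{\partial M}R\,ds$; since $R_{\min}\to\infty$ as $t\uparrow T$, $L$ cannot stay bounded away from $0$, so $L\to 0$, whence $\int_{\partial M}\psi\,ds\to 0$ ($\psi$ being bounded), $A\sim 4\pi(T-t)$, and $\tilde t(t)=\int_0^t 2\pi/A\,d\tau\to\infty$. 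Thus the normalized flow exists for all $\tilde t\in[0,\infty)$. Finally, because $\phi\to\infty$ while $\psi$ stays bounded, $\tilde\psi=\phi^{-1/2}\psi\to 0$ uniformly: this is the reason any limit of $\tilde g(\tilde t_{n})$ has totally geodesic boundary.

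It remains to establish, for the normalized flow, a uniform bound $0<c\leq R\leq C$ and a non-collapsing bound (the area being fixed at $2\pi$, it suffices to bound the diameter, or the injectivity radius, below), and then convergence — which is where the real work lies, exactly as in the closed case. I would run one of two schemes: (i) Hamilton's potential-function/entropy argument — let $f$ solve $\Delta f=R-r$ with $\partial_\nu f=0$ and $\int_M f=0$ (solvable since $\int_M(R-r)\,dA=0$ by Gauss--Bonnet), form the trace-free Hessian $M_{ij}=\nabla_i\nabla_j f-\tfrac12(\Delta f)g_{ij}$ together with the associated Harnack quantity, and compute $\frac{d}{dt}\int_M|M|^2\,dA$ and the derivative of the entropy $\int_M R\log R\,dA$; or (ii) a direct analysis of the logarithmic-diffusion-type equation $\partial_t u=e^{-u}\Delta_0 u-e^{-u}R_0+r$ with its Robin condition, in the spirit of \cite{Hsu}. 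In either case the integrations by parts generate boundary integrals over $\partial M$, and with $\partial_\nu f=0$ and $\psi\geq0,\ \partial_t\psi\leq0$ these integrals have the right sign or can be absorbed; one then concludes that the monotone quantity converges, that $\int_M|M|^2\,dA\to0$, and — via a Harnack inequality whose proof must again be checked up to the boundary — that $R$ is uniformly bounded and $R-r\to0$. Given $\tilde t_n\to\infty$, Cheeger--Gromov compactness then extracts a subsequential smooth limit of area $2\pi$; it has constant curvature because $R-r\to0$ and $|\nabla R|\to0$ (equivalently, $M_{ij}\to0$ makes the limit a gradient soliton on a surface, hence of constant curvature), and totally geodesic boundary because $\tilde\psi\to0$. \textbf{The main obstacle} is precisely this a priori analysis of the normalized flow near the boundary: propagating the curvature and non-collapsing estimates up to $\partial M$ and controlling the boundary terms in the entropy and Harnack estimates. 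Once the interior machinery of \cite{Hamilton0,Chow0,Brendle} is seen to survive the presence of the (asymptotically geodesic) boundary, the theorem follows.
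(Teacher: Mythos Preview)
Your route is genuinely different from the paper's. You propose to push Hamilton's entropy/Harnack machinery (as in \cite{Hamilton0,Chow0,Brendle}) through the boundary; the paper instead uses Perelman's $\mathcal{W}$-functional (Theorem~\ref{monotonicity2}, whose boundary terms have the right sign precisely when $\psi\geq 0$ and $\psi'\leq 0$) together with a blow-up analysis at the singular time. The paper's main new ingredient is an \emph{extension procedure} (Theorem~\ref{extension}, Corollary~\ref{lengthshortest}) embedding $(M,g)$ into a closed surface with controlled curvature, which yields a Klingenberg-type lower bound on the length of the shortest geodesic chord orthogonal to $\partial M$ and hence injectivity-radius control $\iota_M\geq \kappa/\sqrt{R_{\max}}$ (Proposition~\ref{noncollapsing}). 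With this in hand one can take blow-up limits, classify them as a round hemisphere or a cigar, and use the $\mathcal{W}$-monotonicity to rule out the cigar. Your sketch never confronts the injectivity-radius issue; the ``non-collapsing bound'' you mention in passing is exactly what the authors isolate as the basic difficulty and resolve by a new argument.

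There is also a concrete gap earlier in your outline. You assert ``since $R_{\min}\to\infty$ as $t\uparrow T$'', but the differential inequality $\tfrac{d}{dt}R_{\min}\geq R_{\min}^2$ only gives $R_{\min}(t)\geq R_{\min}(0)/(1-R_{\min}(0)\,t)$, which forces $T\leq 1/R_{\min}(0)$ yet does \emph{not} force $R_{\min}(T^-)=\infty$: if $R_{\max}$ blows up at some $T<1/R_{\min}(0)$ the lower bound on $R_{\min}$ at $t=T$ is finite. Your chain $R_{\min}\to\infty\Rightarrow L\to 0\Rightarrow A(t)\sim 4\pi(T-t)\Rightarrow \tilde t\to\infty$ therefore breaks at the first link. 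In the paper these conclusions come only \emph{after} the blow-up analysis: compactness of every blow-up limit gives $A(t)\to 0$ (Proposition~\ref{areaestimate}), then $\tilde t\to\infty$, then $R_{\max}(t)\sim 1/(T-t)$ (Proposition~\ref{curvatureestimate}), and finally $R_{\max}/R_{\min}\to 1$. Your entropy program might ultimately be made to work---it would amount to redoing Brendle's analysis with a decaying rather than identically vanishing geodesic curvature---but the obstacle you yourself flag (controlling all the boundary integrals in the Harnack and entropy computations when $\psi\not\equiv 0$) is left open, and the paper sidesteps it entirely.
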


Theorem \ref{maintheorem} partially extends results on the asymptotic behavior of solutions
to the Ricci flow known for $\mathbb{S}^2$ (Hamilton \cite{Hamilton} and Chow \cite{Chow0}),
and for the case of surfaces with totally geodesic boundary (Brendle \cite{Brendle}) and
with rotational symmetry (\cite{Cortissoz1}).
 
Before going any further, let us give an outline of the proof of Theorem \ref{maintheorem}. First of all,
given an initial metric $g_0$ of positive scalar curvature and
convex boundary, it can be shown that the curvature of $g\left(t\right)$
blows up in finite time, say $T<\infty$; the idea then is to take a blow up limit 
of the solution $\left(M,g\left(t\right)\right)$ as $t\rightarrow T$, and to show that the only
possibility for this blow up limit is to be a round hemisphere: this would, essentially, give
a proof of Theorem \ref{maintheorem}. It remains then to remove one
technical difficulty: we must be able to produce this blow up limit, and hence
we will have to show that we can estimate the injectivity radius of the surface, and, because 
it has boundary, we are required to show that there are no geodesics hitting the boundary
orthogonally that are too short with respect
to the inverse of the square root
of the maximum of the curvature. This is the basic new ingredient in the proof of 
Theorem \ref{maintheorem}, and to prove it we have introduced  
  an extension procedure
for surfaces with boundary that allows some control over the maximum curvature and
size of the extension.  

Our second result is concerned with the behavior of the Ricci flow when the geodesic curvature of the boundary is nonpositive.
Again, using blow up analysis techniques, we prove the following theorem, which generalizes similar results from \cite{Cortissoz1} (notice that we make
no requirements on the sign of $R$).
\begin{theorem}
\label{maintheorem2}
Let $g_0$ be a rotationally symmetric metric on the two-ball. Assume that $k_{g_0}\leq 0$, and
that the boundary data is given by $\psi=k_{g_0}$. Then the normalized flow corresponding
to the solution to (\ref{flow}) with initial data $g_0$ and boundary data $\psi$ exists for 
all time.   
\end{theorem}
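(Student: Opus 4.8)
The plan is to reduce the theorem to two area estimates for the \emph{unnormalized} flow $g(t)$ on $[0,T)$, $T$ its maximal time, and to obtain the more delicate one by a blow-up argument. Since $\psi=k_{g_0}$ is constant on $\partial M$, rotational symmetry is preserved by (\ref{flow}), so we work with rotationally symmetric metrics on the two-ball throughout. By Gauss--Bonnet and $k_g=\psi\le 0$,
\[
\frac{d}{dt}A_g(t)=-\int_M R_g\,dA_g=-4\pi+2\int_{\partial M}k_g\,ds_g\le-4\pi,
\]
so $T\le A_g(0)/4\pi<\infty$, $A_g$ is decreasing, and $A_T:=\lim_{t\to T}A_g(t)\ge0$ exists. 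Writing $\tilde g=\phi g$ with $\phi=2\pi/A_g$ one has $\tilde t(t)=\int_0^t 2\pi/A_g(\tau)\,d\tau$ and $\tilde R_{\tilde g}=\phi^{-1}R_g$, so the normalized flow exists for all time exactly when $\int_0^T 2\pi/A_g(\tau)\,d\tau=\infty$. If $A_T>0$ then $\phi$ stays bounded, $\tilde t$ stays finite, and since the unnormalized curvature must blow up at $T<\infty$ so does the normalized one, so the normalized flow would stop. Hence it suffices to prove (i) $A_T=0$ and (ii) $A_g(t)\le C(T-t)$, for these give $\int_0^T 2\pi/A_g(\tau)\,d\tau\ge\int_0^T\frac{2\pi\,d\tau}{C(T-\tau)}=\infty$.

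For the soft ingredients behind (ii), I would first bound the scalar curvature from below. Differentiating the conformal formula for $k_g$ and using $\partial_t\psi\equiv0$ and $\partial_t u=-R_g$ (where $g=e^ug_0$) gives the boundary identity $\partial_{n_g}R_g=R_g\,\psi$ on $\partial M$; together with $\partial_tR_g=\Delta_gR_g+R_g^2$ and a Hopf-lemma argument at a would-be boundary minimum — this is exactly where $\psi\le0$ is used, since it forces $\partial_{n_g}R_g\ge0$ there — the minimum principle yields $R_g\ge-C_0$ for all $t<T$, with $C_0$ depending only on $g_0$. Consequently
\[
\frac{d}{dt}L_g(\partial M)=-\tfrac12\int_{\partial M}R_g\,ds_g\le\tfrac{C_0}{2}L_g(\partial M),
\]
so Gr\"onwall gives $L_g(\partial M)\le L_{g_0}(\partial M)e^{C_0T/2}=:L_1<\infty$ (here $T<\infty$ is essential), whence $\big|\tfrac{d}{dt}A_g\big|=4\pi+2\int_{\partial M}|\psi|\,ds_g\le4\pi+2\|\psi\|_\infty L_1=:C_1$. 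Granting (i), integrating this from $t$ to $T$ gives $A_g(t)\le C_1(T-t)$, which is (ii).

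The main obstacle is (i). Suppose $A_T>0$; then the curvature blows up at $T$ without the surface collapsing, and I would choose $(p_i,t_i)$ with $t_i\to T$ and $|R_g(p_i,t_i)|=\max_{\overline M\times[0,t_i]}|R_g|=:Q_i\to\infty$ and set $\tilde g_i(t)=Q_ig(t_i+Q_i^{-1}t)$. To extract a complete pointed limit one needs, in addition to the bound $|R_{\tilde g_i}|\le1$ on $M\times[-Q_it_i,0]$, a uniform injectivity-radius lower bound at $p_i$, and this is precisely what the extension procedure and the no-short-orthogonal-geodesic estimate developed for Theorem \ref{maintheorem} provide (together with a Cheeger--Gromov-type compactness for surfaces with boundary, or doubling across the asymptotically totally geodesic boundary). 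Since $k_{\tilde g_i}=Q_i^{-1/2}\psi\to0$ and $R_{\tilde g_i}\ge-C_0/Q_i\to0$, the limit is a complete rotationally symmetric ancient solution with $R\ge0$ and empty or totally geodesic boundary. If this limit is compact it must be a shrinking round hemisphere, and then $A_g(t_i)=Q_i^{-1}\mathrm{Area}_{\tilde g_i}(M)\to0$, forcing $A_T=0$ — a contradiction; if it is noncompact the only non-flat possibility is the cigar soliton (or the corresponding half-cigar). Thus everything comes down to excluding the cigar as a blow-up limit, and this is the hard part: Hamilton's surface entropy is unavailable here because we impose no sign condition on $R$, so instead I expect to use a Hamilton-type isoperimetric inequality for surfaces with boundary — which, unlike the entropy, does not need $R>0$ — or a $\kappa$-noncollapsing argument, to rule it out. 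With (i) in hand, the estimates of the previous paragraph yield (ii), and the theorem follows.
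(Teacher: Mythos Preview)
Your reduction to showing $A_T=0$ (together with the linear upper bound $A_g(t)\le C(T-t)$, which you correctly deduce from $A_T=0$, the lower bound on $R$, and the Gr\"onwall estimate for $L_g(\partial M)$) matches the paper's. The difficulty is indeed concentrated in proving $A_T=0$ via a blow-up argument and handling the cigar. But two of your key steps rest on tools that do not apply in this setting, and your argument never uses the hypothesis of rotational symmetry, which is essential.

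First, you invoke the extension procedure and the no-short-orthogonal-geodesic estimate from Section~\ref{Non-collapsing} to control the injectivity radius along the blow-up sequence. Those results (Theorem~\ref{extension} and Proposition~\ref{noncollapsing}) require \emph{strictly positive} Gaussian curvature and \emph{nonnegative} geodesic curvature of the boundary; here $k_g=\psi\le 0$ and $R$ has no sign, so neither applies. The paper instead exploits rotational symmetry: by comparison, the radius $\mathcal{R}$ of the disk satisfies $\mathcal{R}\ge \pi/(2\sqrt{R_{\max}})$, and Hamilton's interior argument gives a good injectivity-radius bound at points a fixed fraction of $\mathcal{R}$ away from the boundary --- in particular one may blow up at the center of the disk.

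Second, your plan to exclude the cigar via ``a Hamilton-type isoperimetric inequality or a $\kappa$-noncollapsing argument'' is not just vague but points in the wrong direction: the Perelman-type monotonicity in Section~\ref{Perelmanmonotonicity} needs $k_g\ge 0$, so $\kappa$-noncollapsing via $\mathcal{W}$ is unavailable here. The paper again uses rotational symmetry explicitly: it studies the quantity $I(r,t)=L_g(\partial D_r)/A_g(D_r)$ on concentric geodesic disks and derives an evolution equation at an interior minimum (Lemma~\ref{isoperimetricevolution}) showing $\overline{I}$ cannot tend to $0$ if the infimum stays in $(0,\mathcal{R})$, which already rules out the cigar in that case. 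The remaining case --- infimum at $r=\mathcal{R}$ along a sequence --- is dispatched not by excluding the cigar but by showing directly that $A\to 0$: if $A$ stayed bounded away from $0$ one would have $L_g(\partial D)\to 0$, and a comparison argument (Lemma~\ref{lengtharea}, using the uniform lower bound on $R$ and the resulting upper bound on $\mathcal{R}$) then forces $A\to 0$ anyway.

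In short, the missing idea is that rotational symmetry replaces both the injectivity-radius machinery and the entropy/$\kappa$-noncollapsing machinery you are trying to import.
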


The layout of this paper is as follows. In Section \ref{evolution} we prove the basic evolution 
equation for the scalar curvature when the metric evolves under (\ref{flow}), and show that
under certain conditions the curvature $R$ blows up in finite time; in Section
\ref{Perelmanmonotonicity} we prove a monotonicity formula for Perelman's functionals 
on surfaces with boundary; in Section \ref{Non-collapsing} it is shown that it is possible to take
blow up limits for solutions to (\ref{flow}), by proving that
we can control the injectivity radius of the surface in terms of the scalar curvature
and the geodesic curvature of the boundary, and proving a compactness theorem for sequences of Ricci
flows; 
in Section \ref{smoothconvergence} we use the results from the previous sections to give a proof of
Theorem \ref{maintheorem}. Finally, in Section \ref{neggeod} we give
a proof of Theorem \ref{maintheorem2}. This paper is complemented by an appendix where among
other things we discuss a procedure to obtain bounds on the derivatives of solutions to (\ref{flow}) -and hence to (\ref{normalizedflow})- in terms
of bounds on the curvature and the boundary data (and its derivatives).

\medskip
Parts of this paper are part of the PhD Thesis of the second author. He wants to thank his advisor 
(the first named author of this paper) and  his home institution (while completing his PhD), Universidad
de los Andes, for their support and encouragement during his studies.

\section{Evolution equations}
\label{evolution}

In the following proposition, which is stated in \cite{Cortissoz1} without proof, we compute the evolution of the curvature of 
a metric $g$ when it is evolving under (\ref{flow}).

\begin{proposition}
\label{evolutionscalarcurv}
Let $\left(M,g\left(t\right)\right)$ be a solution to (\ref{flow}).
The scalar curvature satisfies the evolution equation
\[
\left\{
\begin{array}{l}
\frac{\partial R_g}{\partial t}=\Delta_g R_g+R_g^2 \quad \mbox{in}\quad M\times\left(0,T\right)\\
\frac{\partial R}{\partial \eta_g}= k_g R_g-2k_g' = \psi R_g -2\psi' \quad \mbox{on}\quad \partial M\times\left(0,T\right)
\end{array}
\right.
\]
where $\eta_g$ is the outward pointing unit normal with respect to the metric $g$, and the
prime ($'$) represents differentiation with respect to time.
\end{proposition}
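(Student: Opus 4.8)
The plan is to derive the bulk equation and the boundary equation separately, using the conformal structure of the flow throughout. Write $g(t)=e^{u(p,t)}g_0$ with $u(\cdot,0)=0$ (after the normalization $\psi(\cdot,0)=k_{g_0}$ in the statement). Under a conformal change in dimension two we have the standard identities $R_g = e^{-u}\left(R_{g_0} - \Delta_{g_0} u\right)$ and $\Delta_g = e^{-u}\Delta_{g_0}$, and the flow equation $\partial_t g = -R_g g$ becomes $\partial_t u = -R_g$. Differentiating $R_g = e^{-u}\left(R_{g_0}-\Delta_{g_0}u\right)$ in $t$ and substituting $\partial_t u = -R_g$ gives, after a short computation, $\partial_t R_g = \Delta_g R_g + R_g^2$; this is the interior equation and is entirely routine, identical to the closed-surface case in \cite{Hamilton0}.

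For the boundary condition the plan is to differentiate the geodesic-curvature relation under a conformal change. In two dimensions the geodesic curvature of $\partial M$ transforms as $k_g = e^{-u/2}\left(k_{g_0} + \tfrac12\,\partial_{\eta_{g_0}} u\right)$, where $\eta_{g_0}$ is the $g_0$-outward unit normal; equivalently $e^{u/2}k_g = k_{g_0} + \tfrac12\,\partial_{\eta_{g_0}}u$, so $\tfrac12\,\partial_{\eta_{g_0}} u = e^{u/2}k_g - k_{g_0}$. I would relate the two normal derivatives by $\partial_{\eta_g} = e^{-u/2}\partial_{\eta_{g_0}}$ (since $\eta_g = e^{-u/2}\eta_{g_0}$). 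Now compute $\partial_{\eta_g}R_g$: starting from $R_g = e^{-u}(R_{g_0}-\Delta_{g_0}u)$ one gets $\partial_{\eta_g}R_g = e^{-u/2}\partial_{\eta_{g_0}}\!\left(e^{-u}(R_{g_0}-\Delta_{g_0}u)\right)$. The cleaner route, which I would actually carry out, is to differentiate the boundary identity $e^{u/2}k_g = k_{g_0}+\tfrac12\,\partial_{\eta_{g_0}}u$ in time: the left side gives $e^{u/2}\left(\tfrac12 (\partial_t u) k_g + \partial_t k_g\right) = e^{u/2}\left(-\tfrac12 R_g k_g + k_g'\right)$, while the right side gives $\tfrac12\,\partial_{\eta_{g_0}}(\partial_t u) = -\tfrac12\,\partial_{\eta_{g_0}}R_g = -\tfrac12 e^{u/2}\partial_{\eta_g}R_g$. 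Equating and cancelling $e^{u/2}$ yields $-\tfrac12 R_g k_g + k_g' = -\tfrac12 \partial_{\eta_g}R_g$, i.e. $\partial_{\eta_g}R_g = k_g R_g - 2k_g'$. Finally, on $\partial M$ the boundary condition of (\ref{flow}) gives $k_g = \psi$ and hence $k_g' = \psi'$, producing $\partial_{\eta_g}R_g = \psi R_g - 2\psi'$.

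I expect the only delicate point to be fixing the correct constants and signs in the conformal transformation law for the geodesic curvature and in the relation $\eta_g = e^{-u/2}\eta_{g_0}$ between the unit normals — in particular making sure the factor $\tfrac12$ and the outward orientation are consistent — since an error there propagates into the coefficient of $k_g$ and into the factor $2$ in front of $k_g'$. A safe way to pin this down is to check it on a model: take $M$ a round disk in $\mathbb{R}^2$ and a dilation $u\equiv c$ constant, where $k_{g_0}=1/\rho$ scales to $k_g = e^{-c/2}/\rho$, confirming the $e^{-u/2}$ factor and the vanishing of the $\partial_{\eta_{g_0}}u$ term. Once the transformation laws are in hand, both the interior and boundary computations are short and mechanical, so I would present them compactly rather than expanding every term.
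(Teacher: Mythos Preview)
Your argument is correct, but it proceeds along a different route from the paper. The paper works in adapted local coordinates at a boundary point, writes the geodesic curvature via the Christoffel symbol as $k_g g_{11} = -(g_{22})^{1/2}\Gamma_{11}^2$, and then differentiates this relation in time using the standard variation formula for Christoffel symbols under $\partial_t g_{ij}=-R_g g_{ij}$; the normal derivative of $R_g$ falls out of the $(\Gamma_{11}^2)'$ term. You instead exploit the conformal structure $g=e^{u}g_0$ directly: you differentiate the conformal transformation law $e^{u/2}k_g=k_{g_0}+\tfrac12\,\partial_{\eta_{g_0}}u$ in time and convert $\partial_{\eta_{g_0}}$ to $\partial_{\eta_g}$ via $\eta_g=e^{-u/2}\eta_{g_0}$. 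Your approach is shorter and coordinate-free, and it makes the origin of each term (the $k_gR_g$ from the conformal weight, the $-2k_g'$ from $\partial_t k_g$) transparent; the paper's approach, by contrast, does not reference a background metric and is the computation one would carry out for a general metric deformation or in higher dimensions where the flow is not conformal. Both are entirely valid for this proposition.
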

\begin{proof}
Since the evolution equation satisfied by $R$ in the interior of $M$ is known (\cite{Hamilton}), we will just compute
its normal derivative, with respect to the outward normal, at the boundary. To do so,
we choose local coordinates $\left(x^1,x^2\right)$ at $p\in \partial M$ such that $x^2=0$ is a defining
function for $\partial M$, so that the corresponding coordinate frame
$\left\{\partial_1,\partial_2\right\}$ is orthonormal at $p\in \partial M$ and time  $t=t_0$
(i.e., the point and instant when we want to compute the normal derivative), and so that  $\partial_2$ 
coincides with the outward unit normal to the boundary
in the whole coordinate patch (this also at time $t=t_0$). 
Since the deformation is conformal, $\partial_2$ remains normal to the boundary.
Therefore the geodesic curvature is given  (as long as the flow is defined for $t\geq t_0$) by
the formula
\[
k_g g_{11} =-\frac{\Gamma_{11}^2}{\left(g^{22}\right)^{\frac{1}{2}}} =-\left(g_{22}\right)^{\frac{1}{2}}\Gamma_{11}^2.
\]
Computing the time derivative, the previous identity yields
\begin{eqnarray*}
\left(k_g g_{11}\right)' &=&  -\frac{1}{2\left(g_{22}\right)^{\frac{1}{2}}}\left(g_{22}\right)'\Gamma_{11}^2
-\left(g_{22}\right)^{\frac{1}{2}}\left(\Gamma_{11}^2\right)'\\
&=&\frac{1}{2}R_g\left(g_{22}\right)^{\frac{1}{2}}\Gamma_{11}^2-\left(g_{22}\right)^{\frac{1}{2}}\left(\Gamma_{11}^2\right)'.
\end{eqnarray*}
Let us calculate $\left(\Gamma_{11}^2\right)'$ (as is customary $\nabla_j$ denotes covariant differentiation
with respect to $\partial_j$, and recall that $g_{12}=0$ and $g_{ii}=1$)
\begin{eqnarray*}
\left(\Gamma_{11}^2\right)'&=& \frac{1}{2}g^{2j}\left(\nabla_1 g_{1j}'+\nabla_1 g_{1j}'-\nabla_j g_{11}'\right)\\
&=&\frac{1}{2}g^{22}\left(-2\nabla_1 \left(R_gg_{12}\right)+\nabla_2\left(R_gg_{11}\right)\right)\\
&=&\frac{1}{2}g^{22}\left(\partial_2 R_g\right)g_{11}=\frac{1}{2}g^{22}\left(\partial_2 R_g\right).
\end{eqnarray*}
Therefore
\begin{equation*}
k_g'g_{11}-k_g R g_{11}=-\frac{1}{2}k_g R - \frac{1}{2\left(g_{22}\right)^{\frac{1}{2}}}\partial_2 R_g
=-\frac{1}{2}k_g R_g -\frac{1}{2}\frac{\partial R_g}{\partial \eta_g},
\end{equation*}
and the result follows.
\end{proof}

As a consequence from Hopf Maximum Principle, since $k_g'=\psi'\leq 0$ in the case
we are considering, we obtain the following result.
\begin{proposition}
\label{blowupprop}
Let $\left(M,g\left(t\right)\right)$, $M$ compact, be a solution to (\ref{flow}).
Assume that $\psi$, the boundary data, satisfies $\psi'\leq 0$.
Then, if $R_{g}\geq 0$ at time $t=0$, it remains so as long as the solution exists.
Furthermore, if the initial data has positive scalar curvature and
the boundary data $\psi$ is nonnegative, then  $R_g$ remains strictly positive and blows up in finite time.
\end{proposition}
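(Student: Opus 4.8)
The plan is to read off both conclusions directly from the evolution equations of Proposition~\ref{evolutionscalarcurv} by means of the scalar parabolic maximum principle together with suitable barriers. Recall that $R:=R_g$ solves $\partial_t R=\Delta_g R+R^2$ in $M\times(0,T)$ together with the Robin-type boundary condition $\partial_{\eta_g}R=\psi R-2\psi'$ on $\partial M\times(0,T)$, and that by hypothesis $-2\psi'\ge 0$.

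First I would show that $R\ge 0$ is preserved. The key observation is that the constant function $\underline R\equiv 0$ is a subsolution of the whole problem: it solves the interior equation (the reaction term $R^2$ vanishes), it satisfies $\underline R(\cdot,0)=0\le R_{g_0}$, and on the boundary $\partial_{\eta_g}\underline R-\psi\,\underline R=0\le -2\psi'$, precisely because $\psi'\le 0$. Hence the comparison principle for the oblique–derivative (Robin) problem gives $R\ge\underline R=0$ for as long as the flow is defined. The one point that must be handled with care here — and the place I expect the only real subtlety to lie — is that the boundary coefficient $\psi$ is not assumed to have a favourable (dissipative) sign, so the naive Hopf-lemma argument at a would-be negative boundary minimum does not close directly; this is fixed in the standard way, by conjugating $v=e^{-\mu d}R$ with $d=\mathrm{dist}_g(\cdot,\partial M)$ and $|\mu|$ large, which replaces the boundary condition by one with nonpositive boundary coefficient (at the cost of harmless lower-order terms in the interior equation, absorbed by a further factor $e^{-\lambda t}$) and reduces matters to the usual strong maximum principle and Hopf lemma.

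Next, assuming in addition $R_{g_0}>0$ and $\psi\ge 0$, I would obtain strict positivity and finite-time blow-up simultaneously via an ODE barrier. Set $c_0:=\min_M R_{g_0}>0$ and let $\phi(t):=(c_0^{-1}-t)^{-1}$, the solution of $\phi'=\phi^2$, $\phi(0)=c_0$, which is positive on $[0,c_0^{-1})$ with $\phi(t)\to\infty$ as $t\uparrow c_0^{-1}$. Then $\phi$, viewed as a spatially constant function, is a subsolution of the flow problem on $[0,c_0^{-1})$: it solves $\partial_t\phi=\Delta_g\phi+\phi^2$, it satisfies $\phi(0)=c_0\le R_{g_0}$, and on the boundary $\partial_{\eta_g}\phi-\psi\phi=-\psi\phi\le 0\le -2\psi'$, now using $\psi\ge 0$, $\phi>0$ and $\psi'\le 0$. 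The same comparison principle yields $R(\cdot,t)\ge\phi(t)>0$ on the common interval of existence; in particular $R$ stays strictly positive. Since $\phi(t)\to\infty$ at $t=c_0^{-1}$ while $R$ is continuous up to the boundary on $\overline M\times[0,t]$ for every $t<T$, the maximal existence time must satisfy $T\le c_0^{-1}<\infty$, and then $\max_M R(\cdot,t)\to\infty$ as $t\uparrow T$ (recalling that the maximal time is characterised by the blow-up of the curvature).

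In summary, once Proposition~\ref{evolutionscalarcurv} is available the argument is essentially the construction of the two barriers $0$ and $\phi$; the only ingredient that needs genuine justification is the maximum/comparison principle for the Robin boundary condition with a boundary coefficient of unrestricted sign, which I would dispatch by the exponential conjugation described above (or simply by invoking the general oblique-derivative maximum principle).
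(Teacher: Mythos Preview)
Your proposal is correct and follows exactly the approach the paper indicates---the maximum principle applied to the evolution equation of Proposition~\ref{evolutionscalarcurv}---only with full details, whereas the paper merely states ``As a consequence from the Maximum Principle'' and gives no further argument. Your attention to the sign of the Robin coefficient $\psi$ and the exponential-conjugation remedy is well placed: this is a genuine technical point (since in the relevant case $\psi\ge 0$ the boundary operator is not dissipative) that the paper elides entirely.
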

\begin{proof}
We leave the proof that $R$ remains strictly positive to the reader, and show that the solution
of (\ref{flow}) must blow-up in finite time. By Hopf Maximum Principle, since 
by the hypotheses at the boundary
we have 
\[
\dfrac{\partial R}{\partial \eta}\geq 0,
\]
the minimum of $R_{\mbox{min}}\left(t\right)$ of $R$ at time $t$ occurs in the interior of $M$. Hence, $R_{\mbox{min}}$
satisfies a differential inequality
\[
\dfrac{d}{dt}R_{\mbox{min}}\geq R_{\mbox{min}}^2.
\] 
Therefore, comparing with the solution of the ODE
\[
\dfrac{du}{dt}=u^2, \quad u\left(0\right)=R_{\mbox{min}}\left(0\right),
\]
we have that $R_{\mbox{min}}\geq u$, and since $u>0$, $u$ must blow up in finite time, and so must $R_{\mbox{min}}$. 
\end{proof}

We must point out that if $R\geq 0$ at time $t=0$, and it is strictly positive at a point, under the assumption
$\psi\geq 0$, $\psi'\leq 0$, it becomes strictly positive instantaneously, so the hypotheses in the
previous proposition may be relaxed a bit.
\subsection{}
In view of Proposition \ref{blowupprop}, this seems a good place to discuss the following fact.
Let $\left(0,T\right)$ be the maximal interval of existence of a solution to (\ref{flow}), with $0<T<\infty$, then
\[
\limsup_{t\rightarrow T}\left(\sup_{p\in M}R_g\left(p,t\right)\right)=\infty.
\]
First of all if $g_0$ is the initial metric, then as the Ricci flow preserves conformal structure, we have that 
the evolving metric can be represented as $g=e^{u}g_0$. Hence, if $R_{g_0}$ is the scalar curvature of the initial metric,
at a fixed (but arbitrary) time, we have that $u$ satisfies the elliptic boundary value problem
\begin{equation}
\label{conformaldefform}
\left\{
\begin{array}{l}
\Delta_{g_0}u+R_{g_0}=R_ge^{u} \quad \mbox{in}\quad M\\
\frac{\partial}{\partial \eta_{g_0}}u+2k_{g_0}=2k_{g_0} e^{\frac{u}{2}}
\quad \mbox{on}\quad \partial M.
\end{array}
\right.
\end{equation}
To reach a contradiction assume that  $R_g$ remains uniformly bounded on $\left(0,T\right)$. A
consequence of this assumption is
that $e^{u}$  remains bounded away from $0$ and uniformly bounded above
on $\left(0,T\right)$.
 Now, since
from bounds on the curvature and also on the geodesic curvature of the boundary
(i.e., on $\psi$)  and its derivatives, we can obtain 
bounds on the derivatives of $u$ (see Theorems \ref{firstderivative} and \ref{derivativesconformalfactor} in the Appendix),  $u$ and its derivatives 
(including those with respect to $t$) are
uniformly bounded on $\left(0,T\right)$, and consequently they converge as $t\rightarrow T$ to a smooth function, say $\hat{u}$.
If we start the Ricci flow at $t=T$ with initial data $e^{\hat{u}}g_0$ and the same boundary data, then we would be able to continue the original
solution past $T$, which contradicts the hypothesis. Therefore, if the Ricci flow (\ref{flow}) cannot be extended past $T<\infty$, the curvature
blows up. We invite the reader to consult the recent work of Gianniotis \cite{Gianniotis}, where this property of the Ricci flow on
manifolds with boundary is discussed in a more general context.

\subsection{} 
There are other interesting cases when solutions to (\ref{flow}) blow-up. We have for instance the following proposition.
\begin{proposition}
\label{otherblowups}
Assume that $\int_M R_{g_0}\, dA_{g_0}+\int_{\partial M}2k_{g_0}\,ds_{g_0}>0$, and assume that $\psi\leq 0$. Then the solution to (\ref{flow})
with initial condition $g_0$ and boundary data $\psi$, blows up in finite time.
\end{proposition}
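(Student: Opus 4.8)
The plan is to track the evolution of the total curvature integral $F(t) = \int_M R_g\, dA_g + \int_{\partial M} k_g\, ds_g$ under the (unnormalized) flow~(\ref{flow}) and show that, under the stated hypotheses, $F$ grows fast enough to force the area to zero in finite time, whence the curvature must blow up. First I would recall the Gauss--Bonnet theorem: for a compact surface with boundary, $F(t) = 2\pi\chi(M)$ is in fact a topological constant, independent of $t$. So the hypothesis $\int_M R_{g_0}\,dA_{g_0} + \int_{\partial M} k_{g_0}\,ds_{g_0} > 0$ is simply the statement $\chi(M) > 0$, i.e.\ $M$ is the disk (or sphere, but here $\partial M \neq \emptyset$), and $F(t) \equiv 2\pi\chi(M) > 0$ for all $t$ in the interval of existence.

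Next I would compute the evolution of the area $A_g(t) = \int_M dA_g$. Since $\frac{\partial}{\partial t}\, dA_g = -R_g\, dA_g$ under~(\ref{flow}), we get $A_g'(t) = -\int_M R_g\, dA_g$. Now I use Gauss--Bonnet to replace $\int_M R_g\, dA_g = 2\pi\chi(M) - \int_{\partial M} k_g\, ds_g = 2\pi\chi(M) - \int_{\partial M}\psi\, ds_g \geq 2\pi\chi(M)$, using the hypothesis $\psi \leq 0$ (so $-\int_{\partial M}\psi\, ds_g \geq 0$). Therefore
\[
A_g'(t) = -\int_M R_g\, dA_g \leq -2\pi\chi(M) < 0.
\]
Integrating, $A_g(t) \leq A_g(0) - 2\pi\chi(M)\, t$, so $A_g(t) \to 0$ at or before the finite time $t_* = A_g(0)/(2\pi\chi(M))$. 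Since the area stays positive as long as a smooth solution exists, the maximal existence time $T$ satisfies $T \leq t_* < \infty$. Finally, invoking the discussion following Proposition~\ref{blowupprop} (a finite-time singularity of~(\ref{flow}) must be a curvature blow-up, since otherwise the solution extends), we conclude $\limsup_{t\to T}\sup_M R_g = \infty$, which is the assertion.

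The one point requiring a little care — and the main (minor) obstacle — is ensuring that the geodesic-curvature boundary integral behaves correctly in the Gauss--Bonnet formula when the boundary has corners; here $M$ is a smooth surface with smooth boundary so there are no corner contributions, and the boundary condition $k_g = \psi$ makes $\int_{\partial M} k_g\, ds_g = \int_{\partial M}\psi\, ds_g$ literally an integral of the prescribed data, so the sign hypothesis $\psi \leq 0$ applies directly. One should also note that the argument only uses $\psi \leq 0$ pointwise on $\partial M \times (0,T)$ and the positivity of $\chi(M)$; no curvature sign assumption on the interior is needed, consistent with the remark in the introduction. I would also double-check the sign convention for $k_g$ used in the paper against Gauss--Bonnet, since the normalization in Proposition~\ref{evolutionscalarcurv} (where $k_g g_{11} = -(g_{22})^{1/2}\Gamma_{11}^2$) fixes it, and confirm that with this convention $\int_M R_g\,dA_g + 2\int_{\partial M} k_g\, ds_g$ versus $\int_M K_g\, dA_g + \int_{\partial M} k_g\, ds_g = 2\pi\chi$ — here $R_g = 2K_g$ is the scalar curvature, so $\int_M R_g\, dA_g + 2\int_{\partial M} k_g\, ds_g = 4\pi\chi(M)$, and the inequality $A_g'(t) \leq -2\pi\chi(M)$ should be adjusted to $A_g'(t) \leq -2\pi\chi(M)$ using $\int_M R_g\,dA_g = 4\pi\chi(M) - 2\int_{\partial M}\psi\,ds_g \geq 4\pi\chi(M) > 0$; either way the right-hand side is a negative constant and the conclusion is unchanged.
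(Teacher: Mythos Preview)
Your proof is correct and follows essentially the same route as the paper: compute $A'(t)=-\int_M R_g\,dA_g$, apply Gauss--Bonnet together with $k_g=\psi\le 0$ to obtain $A'(t)\le -4\pi\chi(M)<0$, and conclude that the area vanishes in finite time, forcing a curvature blow-up. The only cosmetic difference is that the paper closes with the inequality $A'\ge -R_{\max}A$ (so bounded $R$ would prevent $A\to 0$), whereas you invoke the general extension criterion discussed after Proposition~\ref{blowupprop}; these are equivalent. Your final paragraph is on point: with the paper's convention $R_g=2K_g$ one has $\int_M R_g\,dA_g+2\int_{\partial M}k_g\,ds_g=4\pi\chi(M)$, and the argument indeed uses $\chi(M)>0$, exactly as in the paper's own proof.
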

\begin{proof}
Let $g\left(t\right)$ be the solution to (\ref{flow}) with initial data $g_0$ and boundary data $\psi$.
If $A\left(t\right)$ represents the area of $M$ with respect to $g\left(t\right)$, we can calculate
\[
\frac{dA}{dt}=-\int_{M}R_g\,dA_{g}=-4\pi\chi\left(M\right)+2\int_{\partial M}k_{g}\,ds_{g}\leq -4\pi\chi\left(M\right).
\]
Therefore, the area cannot be positive for all time, hence 
 a singularity must occur in finite time.
\end{proof}
\section{Monotonicity of Perelman's Functionals on surfaces with boundary.}
\label{Perelmanmonotonicity}

The purpose of this section is to show a monotonicity formula for Perelman's celebrated $\mathcal{F}$
and $\mathcal{W}$ functionals (see \cite{Perelman})
in the case of surfaces with boundary. The results in this section are stated, although with no carefully crafted proofs,
more or less in the same way in
\cite{Cortissoz1}.
As usual, all curvature quantities, scalar products and operators depend on the time-varying metric $g$
(some of them will not bear a subindex to show that dependence). We  
will use the Einstein summation convention freely, and the raising and lowering of indices
is done, by means of the metric $g$, in the usual way. 

In order to proceed, recall the definition of  Perelman's  $\mathcal{F}$-functional:   
\begin{eqnarray*}
\label{perelman1}
\mathcal{F}\left(g_{ij},f\right)&=& \int_{M} \left(R_g+\left|\nabla f\right|^2\right)
\exp\left(-f\right)\,dV_g, 
\end{eqnarray*}
where $dV_g$ represents the volume (in the case of a surface, area) element of the manifold
$M$ with respect to the metric $g$. 
Let us compute the first variation of this functional on a manifold with boundary.

\begin{proposition}
\label{firstvariation1}
Let $\delta g_{ij}=v_{ij}, \delta f = h, g^{ij}v_{ij}=v$. Then we have,
\begin{eqnarray*}
\label{firstvariation}
\delta \mathcal{F}
&=&\int_M\exp\left(-f\right)\left[-v^{ij}\left(R_{ij}+\nabla_i\nabla_j f\right)
+\right(\frac{v}{2}-h\left)\left(2\Delta_g f-\left|\nabla f\right|^2+R_g\right)\right]\,dV_g \notag\\
&&-\int_{\partial M}\left[\frac{\partial v}{\partial \eta_g} +\left(v-2h\right)\frac{\partial f}{\partial \eta_g}\right]
\exp\left(-f\right)\,d\sigma_g+\\
&&\int_{\partial M}\exp\left(-f\right)\nabla_i v_{ij}\eta^j\,d\sigma_g
-\int_{\partial M} \nabla_j\exp\left(-f\right)v_{ij}\eta^i\,d\sigma_g .\notag
\end{eqnarray*}
Here, $R_{ij}$ represents the Ricci tensor of the metric $g$, 
$\displaystyle\frac{\partial}{\partial \eta_g}$ ($=\eta^i\partial_i$ in local coordinates) is the outward unit normal to $\partial M$ with respect to $g$, 
$\nabla$ represents
covariant differentiation
with respect to the metric $g$, and $d\sigma_g$ represents the volume element of $\partial M$.
\end{proposition}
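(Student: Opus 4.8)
The plan is to compute $\delta\mathcal{F}$ directly from its definition by taking variations term by term, and then to carefully integrate by parts, keeping track of all boundary contributions that would be discarded on a closed manifold. I would set $g(s)_{ij}=g_{ij}+sv_{ij}$, $f(s)=f+sh$, differentiate $\int_M(R_g+|\nabla f|^2)e^{-f}\,dV_g$ at $s=0$, and assemble the result from three standard ingredients: the first variation of the volume element, $\delta(dV_g)=\tfrac{v}{2}\,dV_g$; the first variation of the scalar curvature, which on a general manifold is $\delta R_g=-\Delta_g v+\nabla_i\nabla_j v^{ij}-R_{ij}v^{ij}$ (here I will \emph{not} drop the divergence terms); and the variation of $|\nabla f|^2=g^{ij}\nabla_i f\nabla_j f$, which produces $-v^{ij}\nabla_i f\nabla_j f+2\langle\nabla f,\nabla h\rangle$.

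The key steps, in order: (1) Expand $\delta\mathcal{F}=\int_M\big[(\delta R_g+\delta|\nabla f|^2)e^{-f}+(R_g+|\nabla f|^2)(\tfrac{v}{2}-h)e^{-f}\big]\,dV_g$. (2) Handle the term $\int_M(-\Delta_g v)e^{-f}\,dV_g$ by integrating by parts twice; each integration by parts on a manifold with boundary yields a boundary integral over $\partial M$ involving $\partial v/\partial\eta_g$ and $v\,\partial f/\partial\eta_g$ against $e^{-f}$. (3) Handle the term $\int_M(\nabla_i\nabla_j v^{ij})e^{-f}\,dV_g$ by integrating by parts, which produces the boundary terms $\int_{\partial M}e^{-f}\nabla_i v_{ij}\eta^j\,d\sigma_g$ and, after a further integration by parts moving the remaining derivative onto $e^{-f}$, the term $-\int_{\partial M}\nabla_j(e^{-f})v_{ij}\eta^i\,d\sigma_g$. (4) Handle $\int_M 2\langle\nabla f,\nabla h\rangle e^{-f}\,dV_g$ by integration by parts, noting $2\langle\nabla f,\nabla h\rangle e^{-f}=-2h\,\mathrm{div}(e^{-f}\nabla f)+2\,\mathrm{div}(he^{-f}\nabla f)$, which contributes interior terms proportional to $h(\Delta_g f-|\nabla f|^2)$ and a boundary term $\int_{\partial M}2h\,\tfrac{\partial f}{\partial\eta_g}e^{-f}\,d\sigma_g$. (5) Collect the interior integrand: the $v^{ij}$ terms combine into $-v^{ij}(R_{ij}+\nabla_i\nabla_j f)$ (the Hessian of $f$ appearing after integrating the $-\Delta_g v$ and cross terms by parts), and the remaining terms organize into $(\tfrac{v}{2}-h)(2\Delta_g f-|\nabla f|^2+R_g)$. (6) Collect the boundary integrals and check that they combine exactly into the three displayed boundary integrals, in particular that the $\partial v/\partial\eta_g$ and $(v-2h)\partial f/\partial\eta_g$ terms assemble with the correct sign as $-\int_{\partial M}\big[\tfrac{\partial v}{\partial\eta_g}+(v-2h)\tfrac{\partial f}{\partial\eta_g}\big]e^{-f}\,d\sigma_g$.

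The main obstacle is bookkeeping rather than any conceptual difficulty: one must integrate by parts several times without ever discarding a boundary term, and the divergence-type contributions to $\delta R_g$, which vanish on a closed manifold, now generate the last two boundary integrals, so the algebra of signs and the identification of which derivative lands on $e^{-f}$ versus on $v_{ij}$ must be done with care. A secondary subtlety is correctly extracting the Hessian term $\nabla_i\nabla_j f$ in the interior: it arises from integrating the cross terms between the variation of curvature and the factor $e^{-f}$, and one must commute derivatives and use the definition of $\Delta_g f = g^{ij}\nabla_i\nabla_j f$ consistently. Once the interior integrand is matched against the known closed-manifold first variation formula of Perelman, the only genuinely new content is the collection of boundary terms, and the proof concludes by verifying that they are precisely the three surface integrals in the statement.
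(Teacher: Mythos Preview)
Your proposal is correct and follows essentially the same route as the paper: start from the pointwise first-variation formula $\delta\mathcal{F}=\int_M e^{-f}\big[-\Delta_g v+\nabla_i\nabla_j v^{ij}-R_{ij}v^{ij}-v^{ij}\nabla_i f\nabla_j f+2\langle\nabla f,\nabla h\rangle+(R_g+|\nabla f|^2)(\tfrac{v}{2}-h)\big]\,dV_g$ (the paper cites Kleiner--Lott for this, you assemble it from the standard variations of $dV_g$, $R_g$ and $|\nabla f|^2$), then integrate by parts the three terms $-\Delta_g v$, $\nabla_i\nabla_j v^{ij}$, and $2\langle\nabla f,\nabla h\rangle$ against $e^{-f}$, retaining every boundary contribution. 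The only cosmetic difference is that for the cross term the paper moves derivatives onto $e^{-f}$ via $2e^{-f}\langle\nabla f,\nabla h\rangle=-2\langle\nabla e^{-f},\nabla h\rangle$ before integrating by parts, whereas you write it as a divergence plus a zero-order term; both yield the same interior and boundary pieces.
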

\begin{proof}
As in \cite{KleinerLott}, we have that
\begin{eqnarray*}
\delta\mathcal{F}\left(v_{ij},h\right)&=&
\int_{M}e^{-f}\left[-\Delta_g v+\nabla_i\nabla_jv^{ij}-R_{ij}v^{ij}\right.\\
&&\left.-v^{ij}\nabla_if\nabla_jf+2g\left(\nabla f,\nabla h\right)
+\left(R_g+\left|\nabla f\right|^2\right)\left(\frac{v}{2}-h\right)\right]\,dV_g.
\end{eqnarray*}
We must compute the integrals on the righthand side of the previous identity, using as our
main tool integration by parts.
We start by calculating
\begin{eqnarray*}
\int_M e^{-f}\left(-\Delta_g v\right)\,dV_g &=& 
-\int_M \Delta_g e^{-f}v\,dV_g+\int_{\partial M}v\frac{\partial e^{-f}}{\partial \eta_g}\,d\sigma_g
-\int_{\partial M}e^{-f}\frac{\partial v}{\partial \eta_g}\,d\sigma_g\\
&=&-\int_M \Delta_g e^{-f}v\,dV_g
-\int_{\partial M}\left(\frac{\partial v}{\partial \eta_g}+v\frac{\partial f}{\partial \eta_g}\right)\exp\left(-f\right)\,
d\sigma_g.
\end{eqnarray*}
Now we compute
\begin{eqnarray*}
\int_M e^{-f}\nabla_i\nabla_j v^{ij}\,dV_g &=& -\int_M \nabla_i e^{-f}\nabla_j v^{ij}\,dV_g
+\int_{\partial M} e^{-f}\nabla_j v^{ij} \eta^i\,d\sigma_g \\
&=& \int_M \nabla_i\nabla_j e^{-f} v^{ij}\,dV_g
-\int_{\partial M} \nabla_i e^{-f} v^{ij} \eta_j\,d\sigma_g\\
&&+\int_{\partial M} e^{-f}\nabla_j v^{ij} \eta_i\,d\sigma_g.
\end{eqnarray*}
Finally,
\begin{eqnarray*}
2\int_M e^{-f}g\left(\nabla f,\nabla h\right) \,dV_g&=& -2\int_M g\left(\nabla e^{-f},\nabla h\right)\,dV_g\\
&=&2\int_M \left(\Delta_g e^{-f}\right) h\,dV_g-\int_{\partial M}h\frac{\partial e^{-f}}{\partial \eta_g}\,d\sigma_g.
\end{eqnarray*}
Putting all these calculations together proves the result.
\end{proof}

Consider the evolution equations on a surface with boundary given by
\begin{equation}
\label{evolution1}
\left\{
\begin{array}{l}
\frac{\partial}{\partial t}g_{ij} = -R_gg_{ij}=-2R_{ij}\quad\mbox{in}\quad M\times\left(0,T\right)\\
k_g\left(\cdot,t\right)=\psi\left(\cdot\right) \quad\mbox{on}\quad \partial M\times\left(0,T\right)\\
\frac{\partial f}{\partial t}=-\Delta_g f+\left|\nabla f\right|^2 -R_g \quad\mbox{in}\quad M\times\left(0,T\right)\\
\frac{\partial}{\partial \eta_g}f=0\quad\mbox{on}\quad \partial M\times\left(0,T\right).
\end{array}
\right.
\end{equation}
A formula for $\dfrac{d}{dt}\mathcal{F}$ is given by the following result.
\begin{theorem}
\label{monotonicity1}
Under (\ref{evolution1}) the functional $\mathcal{F}$ satisfies
\begin{eqnarray*}
\frac{d}{dt}\mathcal{F}&=& 2\int_M \left|R_{ij}+\nabla_i\nabla_j f\right|^2\exp\left(-f\right)\,dA_g\\
&&+\int_{\partial M}\left(k_g R_g-2k_g'\right)\exp\left(-f\right)\,ds_g 
+2\int_{\partial M}k_g\left|\nabla^{\top}f\right|^2\exp\left(-f\right)\,ds_g,
\end{eqnarray*}
and here $\nabla^{\top} f$ represents the component of  $\nabla f$
tangent to $\partial M$, $dA_g$ the area element of the surface, and
$ds_g$ the length element of the boundary.
\end{theorem}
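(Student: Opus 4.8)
The plan is to differentiate $\mathcal{F}(g(t),f(t))$ in time by applying the first variation formula from Proposition \ref{firstvariation1} with the specific choices dictated by (\ref{evolution1}), namely $v_{ij}=-2R_{ij}=-R_gg_{ij}$ (so that $v=g^{ij}v_{ij}=-2R_g$) and $h=\partial_tf=-\Delta_gf+|\nabla f|^2-R_g$, and then to show that the resulting boundary terms collapse to exactly the two boundary integrals in the statement while the interior terms reproduce Perelman's classical interior integrand $2\int_M|R_{ij}+\nabla_i\nabla_jf|^2e^{-f}\,dA_g$. First I would handle the interior: with $v/2-h=-R_g-h=\Delta_gf-|\nabla f|^2$, the term $(\tfrac v2-h)(2\Delta_gf-|\nabla f|^2+R_g)$ together with $-v^{ij}(R_{ij}+\nabla_i\nabla_jf)=2R^{ij}(R_{ij}+\nabla_i\nabla_jf)$ must be massaged, exactly as in the closed case (cf. \cite{KleinerLott}, \cite{Perelman}), into $2|R_{ij}+\nabla_i\nabla_jf|^2$ up to a divergence; here one uses the contracted second Bianchi identity $\nabla^iR_{ij}=\tfrac12\nabla_jR_g$ and integration by parts, and on a surface $R_{ij}=\tfrac12R_gg_{ij}$ so these manipulations are in fact cleaner. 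The divergence terms produced in this step will contribute additional boundary integrals that must be carried along and combined with the explicit boundary terms of Proposition \ref{firstvariation1}.

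Next I would evaluate the boundary contributions. There are four sources: the term $-\int_{\partial M}[\partial_\eta v+(v-2h)\partial_\eta f]e^{-f}\,d\sigma$, the two terms $\int_{\partial M}e^{-f}\nabla_iv_{ij}\eta^j\,d\sigma-\int_{\partial M}\nabla_j(e^{-f})v_{ij}\eta^i\,d\sigma$, and whatever divergences arise from the interior integration by parts. The key simplifications are: (i) the boundary condition $\partial_\eta f=0$ from (\ref{evolution1}) kills every term containing $\partial f/\partial\eta$, in particular the $(v-2h)\partial_\eta f$ term and the $\nabla_j(e^{-f})v_{ij}\eta^i=-e^{-f}(\nabla_jf)v_{ij}\eta^i$ term once one decomposes $\nabla f$ into normal and tangential parts and notes $v_{ij}=-R_gg_{ij}$ is a multiple of the metric so $v_{ij}\eta^i$ is purely normal; (ii) since $v_{ij}=-R_gg_{ij}$, we get $\nabla_iv_{ij}=-\nabla_jR_g$ and $\partial_\eta v=-2\partial_\eta R_g$, so the surviving boundary terms are expressed purely through $\partial R_g/\partial\eta_g$ along $\partial M$; (iii) one then invokes Proposition \ref{evolutionscalarcurv}, i.e. $\partial R_g/\partial\eta_g=k_gR_g-2k_g'$, to turn these into the stated $\int_{\partial M}(k_gR_g-2k_g')e^{-f}\,ds_g$ term. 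The remaining $2\int_{\partial M}k_g|\nabla^\top f|^2e^{-f}\,ds_g$ term should emerge from the boundary remainder of the interior integration by parts (the piece involving $\nabla_i\nabla_jf$ contracted against $\eta$), using the standard identity relating the normal derivative of the tangential part of a gradient to the geodesic curvature, i.e. the second fundamental form of $\partial M$ which on a surface is $k_g$ times the tangential metric; the condition $\partial_\eta f=0$ is again essential to discard cross terms.

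The main obstacle I anticipate is bookkeeping the boundary integrals consistently: every integration by parts in the interior (there are several — for $\Delta_g v$, for $\nabla_i\nabla_jv^{ij}$, for the Bianchi manipulation, and for the Hamilton-type identity $\Delta_g|\nabla f|^2=2|\nabla_i\nabla_jf|^2+2\langle\nabla f,\nabla\Delta_gf\rangle+2R_{ij}\nabla^if\nabla^jf$) spills a boundary term, and these must be collected and shown to cancel against each other except for precisely the two advertised terms. The decomposition $\nabla f=\nabla^\top f+(\partial_\eta f)\eta$ with $\partial_\eta f=0$ reduces $\nabla f$ to $\nabla^\top f$ on $\partial M$, which is what makes $|\nabla^\top f|^2$ appear rather than $|\nabla f|^2$, but one must be careful that $\nabla_i\nabla_jf$ restricted to the boundary still has a normal-normal component governed by $\partial_\eta f$ via the boundary condition differentiated tangentially, and that component is what, when paired with $\eta^i\eta^j$ versus the tangential directions, produces the geodesic-curvature weight. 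I would treat the surface case directly (using $R_{ij}=\tfrac12Rg_{ij}$, $\nabla_iR_{jk}=\tfrac12(\nabla_iR)g_{jk}$) to keep these terms manageable, and cross-check the final identity against Brendle's totally geodesic case \cite{Brendle} (where $k_g\equiv0$ and $k_g'\equiv0$, so both boundary terms vanish and one recovers the closed-surface formula), which is a good consistency test.
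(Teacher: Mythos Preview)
Your approach is correct in principle but differs substantially from the paper's, and you have chosen the harder road. The paper does \emph{not} plug the raw variations $v_{ij}=-2R_{ij}$, $h=-\Delta_g f+|\nabla f|^2-R_g$ into Proposition~\ref{firstvariation1}. Instead it first pulls back by the one-parameter family of diffeomorphisms $\varphi_t$ generated by $-\nabla f$ (which preserves the boundary since $\partial_\eta f=0$), so that the effective variations become
\[
v_{ij}=-2\bigl(R_{ij}+\nabla_i\nabla_j f\bigr),\qquad h=-\Delta_g f-R_g.
\]
With these choices one has $v/2-h=0$ identically, so the second interior term in Proposition~\ref{firstvariation1} drops out and the first gives $2\int_M|R_{ij}+\nabla_i\nabla_j f|^2e^{-f}\,dA_g$ directly, with \emph{no} further interior integration by parts and hence no additional boundary debris. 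All that remains is to evaluate the three explicit boundary integrals of Proposition~\ref{firstvariation1} for these new $v_{ij}$, $h$: the paper shows the first vanishes, the second yields $\int_{\partial M}(k_gR_g-2k_g')e^{-f}\,ds_g$ via $\nabla_iv^i{}_n=\nabla_nR_g$ (using Bianchi, the Ricci identity, and the evolution of $f$), and the third yields $2\int_{\partial M}k_g|\nabla^\top f|^2e^{-f}\,ds_g$ because $\nabla^\alpha\nabla_n f\,\nabla_\alpha f=-H(\nabla^\top f,\nabla^\top f)$ when $\partial_\eta f=0$.

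Your direct route---keeping $v_{ij}=-R_gg_{ij}$ and massaging the interior into Perelman form by hand---does reach the same destination, and your computation of the explicit boundary terms of Proposition~\ref{firstvariation1} is fine (they combine to $\int_{\partial M}(k_gR_g-2k_g')e^{-f}\,ds_g$, as you found). But the $2k_g|\nabla^\top f|^2$ term must then emerge from the extra boundary pieces spun off by the Bochner/Bianchi integrations by parts in the interior, exactly as you anticipate in your ``main obstacle'' paragraph. That bookkeeping is doable but genuinely unpleasant; the diffeomorphism trick exists precisely to avoid it, and the paper takes full advantage.
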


\begin{proof}
Let us first introduce some notation and conventions. Since
parts of these computations apply to manifolds 
of higher dimensions, in this proof we will 
fix coordinates $x^1,x^2,\dots,x^{n-1},x^n$ at a boundary point and at 
fixed (but arbitrary) time $t$, so that $x^n=0$ is a defining function for $\partial M$. 
We will assume that on $\partial M$, $\displaystyle\frac{\partial}{\partial x^n}=\frac{\partial}{\partial\eta_g}$ represents
the outward unit normal, and hence we will
denote by a subscript or superscript $n$ quantities that 
are evaluated, at a boundary point, with respect to the outward unit normal. By a greek letter we
will represent indices running from $1,2,3,\dots, n-1$, and therefore at a boundary point 
the vector fields $\displaystyle\frac{\partial}{\partial x^{\alpha}}$ are tangent to the boundary. 
Let us transform the evolution equations given by (\ref{evolution1}) using the one-parameter family
of diffeomorphisms $\varphi_t$ generated by $-\nabla f$; notice that
the boundary is sent to itself via this family of diffeomorphisms due to the
fact that $\displaystyle \frac{\partial f}{\partial \eta_g}=0$. Now, by defining $f\left(\cdot,t\right)=f\left(\varphi_{t}\left(\cdot\right),t\right)$ and 
$g=\left(\varphi_t\right)_{*}g$ (forgive the abuse of notation),
instead of (\ref{evolution1}) we must take the variations given by
\[
v_{ij}=\delta g_{ij}=-2\left(R_{ij}+\nabla_i\nabla_j f\right), \quad h=\delta f=-\Delta_g f -R.
\]
For a moment let us denote with a subindex $\left(\varphi_t\right)_{*}g$ the quantities that
depend on the pullback metric. Observe that then we have
\[
\frac{\partial}{\partial \eta_{\left(\varphi_t\right)_{*}g}}R_{\left(\varphi_t\right)_{*}g}\left(\cdot,t\right)
=\left(k_g R_g-2k_g'\right)\left(\varphi_t\left(\cdot\right),t\right),
\] 
so keeping on with the abuse of notation, we will write, 
for the metric $g=\left(\varphi_t\right)_{*}g$,
\[
\frac{\partial R}{\partial \eta_g}=k_g R_g - 2k_g',
\]
where the prime ($'$) now means that we differentiate $k_g$ (or $\psi$) with respect to its second variable ($t$) and then 
it is evaluated at $\left(\varphi_t\left(\cdot\right),t\right)$.
Notice that for the pullback metric we still have $\displaystyle\frac{\partial f}{\partial \eta_g}=0$.

We will now compute each of the boundary integrals in the first variation of Perelman's
functional given by Proposition \ref{firstvariation1}, which will prove the theorem, since the computations 
for the integrals over $M$ are known from
the work of Perelman. We change the notation from the previous proposition as follows: $dV_g=dA_g$ and $d\sigma_g=ds_g$.
We start with
\[
\int_{\partial M}\left[\frac{\partial v}{\partial \eta_g} +\left(v-2h\right)\frac{\partial f}{\partial \eta_g}\right]
\exp\left(-f\right)\,ds_g, \quad
\int_{\partial M}\exp\left(-f\right)\nabla_i v^{ij}\eta^j\,ds_g.
\]
To compute these integrals, let us first calculate 
$\nabla_i v^{ij}\eta_j$. We have
\begin{eqnarray*}
\nabla_i v_{\,\,n}^i&=&-2\nabla_i R_{\,\,n}^i-2\nabla_i\nabla^i\nabla_n f\\
&=&-\nabla_n R_g-2\Delta_g \nabla_n f \quad\mbox{(by the contracted Bianchi identity)}.
\end{eqnarray*}
By the Ricci identity, using the fact that at the boundary $\nabla_n f=0$ 
and also $R_{\alpha n}=0$, we obtain
\[
\Delta_g \nabla_n f = \nabla_n\Delta_g f+R_{n}^{\,\,\,k}\nabla_k f= \nabla_n \Delta_g f,
\]
and therefore,
\[
\nabla_i v_{\,\,n}^i=-\nabla_n R_g - 2\nabla_n \Delta_g f.
\]
Using the evolution equation $f_t=-\Delta_g f - R$, we get, at the boundary,
\[
\nabla_n\Delta_g f = -\nabla_n R_g.
\]
This last identity has two consequences. 
On the one hand, it implies that
\[
\nabla_i v_{\,\,n}^i=-\nabla_n R_g+2\nabla_n R=\nabla_n R_g=k_g R_g-2k_g',
\]
which shows that
\[
\int_{\partial M}\exp\left(-f\right)\nabla_i v^{ij}\eta_j\,ds_g=\int_{\partial M}
\left(k_g R_g-2k_g'\right)\exp\left(-f\right)\,ds_g.
\]
On the other hand, it implies
that $\frac{\partial v}{\partial \eta_g}=0$, so
we obtain
\[
\int_{\partial M}\left[\frac{\partial v}{\partial \eta_g} +\left(v-2h\right)\frac{\partial f}{\partial \eta_g}\right]
\exp\left(-f\right)\,ds_g=0.
\]

Let us now compute the integral
\[
II=-\int_{\partial M} \nabla_i\exp\left(-f\right)v^{ij}\eta_j\,ds_g
=-\int_{\partial M}\nabla_i \exp\left(-f\right)v_{\,\,n}^i\,ds_g.
\]
Under the previous conventions,
\[
II=\int_{\partial M}\nabla_{\alpha}f\exp\left(-f\right)v_{\,\,n}^{\alpha}\,ds_g\\
+\int_{\partial M}\nabla_{n}f\exp\left(-f\right)v_{\,\,n}^n\,ds_g.
\]
Using the fact that $\nabla_n f=\partial_n f=0$ on $\partial M$, we can compute
\[
\nabla^{\alpha}\nabla_n f\nabla_{\alpha}f=-H\left(\nabla^{\top}f,\nabla^{\top}f\right),
\]
where $H$ denotes the second fundamental form of the boundary.
Hence, using the definition of $v_{\alpha n}$, we get
\begin{equation*}
II=\int_{\partial M}
2H\left(\nabla^{\top}f,\nabla^{\top}f\right)\exp\left(-f\right)\,ds_g
=\int_{\partial M}
2k_g\left|\nabla^{\top}f\right|^2\exp\left(-f\right)\,ds_g,
\end{equation*}
and the formula is proved.
\end{proof}

Next, we consider Perelman's $\mathcal{W}$-functional, namely,
\[
\mathcal{W}\left(g,f,\tau\right)=\int_{M}\left[\tau\left(\left|\nabla f\right|^2+R_g\right)+f-2\right]
\left(4\pi \tau\right)^{-1}\exp\left(-f\right)\,dA_g.
\]
Under the unnormalized Ricci flow (\ref{flow}), and the evolution equations
\begin{equation}
\label{normalperelman}
\left\{
\begin{array}{l}
\frac{\partial f}{\partial t}=-\Delta_g f+\left|\nabla f\right|^2-R_g+\frac{1}{\tau}
\quad\mbox{in}\quad M\times\left(0,T\right)\\
\frac{d\tau}{dt}=-1 \quad\mbox{in}\quad \left(0,T\right)\\
\frac{\partial f}{\partial \eta_g}=0 \quad\mbox{on}\quad \partial M\times\left(0,T\right),
\end{array}
\right.
\end{equation}
we have the following formula, which shows a monotonicity property
as long as $k_g\geq 0$ and $k_g'=\psi'\leq 0$, for the functional $\mathcal{W}$. 
\begin{theorem}
\label{monotonicity2}
\begin{eqnarray*}
\frac{d}{dt}\mathcal{W}&=&\int_M 2\tau\left|R_{ij}+\nabla_i\nabla_j f-\frac{1}{2\tau}g_{ij}\right|^2\left(4\pi\tau\right)^{-1}
\exp\left(-f\right)\,dA_g\\
&&+\frac{1}{4\pi}
\left(\int_{\partial M}\left(k_gR_g-2k_g'+2k_g\left|\nabla^{\top}f\right|^2\right)\exp\left(-f\right)\,ds_g\right).
\end{eqnarray*}
\end{theorem}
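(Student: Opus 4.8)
The plan is to follow the same route as in the proof of Theorem \ref{monotonicity1}, exploiting the fact that on a surface the $\mathcal{W}$-functional splits as
\[
\mathcal{W}\left(g,f,\tau\right)=\frac{1}{4\pi}\,\mathcal{F}\left(g,f\right)+\frac{1}{4\pi\tau}\int_M\left(f-2\right)\exp\left(-f\right)\,dA_g ,
\]
so that its first variation is governed by Proposition \ref{firstvariation1} together with the (lower order) first variation of $\int_M\left(f-2\right)\exp\left(-f\right)\,dA_g$ and the elementary derivative of the scalar factor $\left(4\pi\tau\right)^{-1}$. First I would, exactly as in the proof of Theorem \ref{monotonicity1}, replace the evolution equations (\ref{normalperelman}) by the pulled-back system under the one-parameter family of diffeomorphisms generated by $-\nabla f$; this family preserves $\partial M$ because $\frac{\partial f}{\partial\eta_g}=0$, and it converts (\ref{normalperelman}) into the variations
\[
v_{ij}=\delta g_{ij}=-2\left(R_{ij}+\nabla_i\nabla_j f\right),\qquad h=\delta f=-\Delta_g f-R_g+\frac{1}{\tau},\qquad \delta\tau=-1,
\]
with $\frac{\partial f}{\partial\eta_g}=0$ still holding for the pulled-back metric.

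The key observation is that the boundary integrals occurring in $\frac{d}{dt}\mathcal{W}$ are precisely $\frac{1}{4\pi}$ times the boundary integrals appearing in Proposition \ref{firstvariation1}. Indeed, the summand $\frac{1}{4\pi\tau}\int_M\left(f-2\right)\exp\left(-f\right)\,dA_g$ is zeroth order in the variations, so after differentiating in $t$ the only spatial derivatives that appear sit inside $f_t=-\Delta_g f-R_g+\frac{1}{\tau}$, and the single integration by parts needed to handle the $\Delta_g f$ term produces a boundary term proportional to $\frac{\partial f}{\partial\eta_g}=0$; the factor $\left(4\pi\tau\right)^{-1}$ is spatially constant; and in Proposition \ref{firstvariation1} the quantity $h$ enters the boundary terms only through the combination $\left(v-2h\right)\frac{\partial f}{\partial\eta_g}$, which vanishes, while the remaining boundary terms involve $v_{ij}=-2\left(R_{ij}+\nabla_i\nabla_j f\right)$, which is the very same tensor used in the proof of Theorem \ref{monotonicity1}. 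Consequently the boundary contribution to $\frac{d}{dt}\mathcal{W}$ equals $\frac{1}{4\pi}$ times the boundary quantity computed there. That computation relied only on the contracted Bianchi identity, the Ricci identity (together with $\nabla_n f=0$ and $R_{\alpha n}=0$ on $\partial M$), and the boundary identity $\nabla_n\Delta_g f=-\nabla_n R_g$; the last identity persists here because differentiating $\frac{\partial f}{\partial\eta_g}=0$ in $t$ gives $\nabla_n\left(-\Delta_g f-R_g+\frac{1}{\tau}\right)=0$ while $\nabla_n\left(\frac{1}{\tau}\right)=0$. Hence the boundary contribution is exactly
\[
\frac{1}{4\pi}\int_{\partial M}\left(k_gR_g-2k_g'+2k_g\left|\nabla^{\top}f\right|^2\right)\exp\left(-f\right)\,ds_g .
\]

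For the interior contribution I would invoke Perelman's original closed-manifold computation: substituting the variations above into the bulk part of $\delta\mathcal{W}$ and completing the square yields $\int_M 2\tau\left|R_{ij}+\nabla_i\nabla_j f-\frac{1}{2\tau}g_{ij}\right|^2\left(4\pi\tau\right)^{-1}\exp\left(-f\right)\,dA_g$. The one point that must be checked separately in the bounded setting is that $\int_M\left(4\pi\tau\right)^{-1}\exp\left(-f\right)\,dA_g$ is independent of $t$, since this is what makes the completion of the square legitimate; this holds because $\frac{d}{dt}\int_M\left(4\pi\tau\right)^{-1}\exp\left(-f\right)\,dA_g=-\int_{\partial M}\frac{\partial}{\partial\eta_g}\left(\left(4\pi\tau\right)^{-1}\exp\left(-f\right)\right)\,ds_g=0$, again by $\frac{\partial f}{\partial\eta_g}=0$. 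Adding the interior and boundary contributions gives the stated formula.

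I expect the only genuine obstacle to be the bookkeeping that confirms that none of the features distinguishing $\mathcal{W}$ from $\mathcal{F}$ — the weight $\left(4\pi\tau\right)^{-1}$, the extra $\frac{1}{\tau}$ in the evolution of $f$, the $\tau$-derivative, and the term $f-2$ — reintroduces a boundary integral; as indicated above, each of these is either spatially constant, zeroth order in the variations, or meets $\nabla_n f=0$, so the boundary analysis collapses onto the one already carried out for $\mathcal{F}$ in Theorem \ref{monotonicity1}.
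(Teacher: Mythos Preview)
Your proposal is correct and follows essentially the same route as the paper's own proof: both reduce the boundary analysis of $\mathcal{W}$ to the one already carried out for $\mathcal{F}$ in Theorem \ref{monotonicity1}, after checking that the extra ingredients distinguishing $\mathcal{W}$ from $\mathcal{F}$ (the weight $\left(4\pi\tau\right)^{-1}$, the term $f-2$, and the shift $\frac{1}{\tau}$ in $h$) produce no new boundary terms thanks to $\frac{\partial f}{\partial\eta_g}=0$. The paper's version is terser---it records the identity $\int_M\left(\left|\nabla f\right|^2-\Delta_g f\right)e^{-f}\,dA_g=0$ and the invariance of the measure $\left(4\pi\tau\right)^{-1}e^{-f}\,dA_g$, then defers the rest to Theorem \ref{monotonicity1}---but your more explicit bookkeeping (in particular, noting that $h$ enters the boundary terms of Proposition \ref{firstvariation1} only through $\left(v-2h\right)\frac{\partial f}{\partial\eta_g}$, and that $\nabla_n\Delta_g f=-\nabla_n R_g$ survives the extra $\frac{1}{\tau}$) fills in exactly the gaps the paper leaves implicit.
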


\begin{proof} 
Using the fact that
\[
0=\int_{\partial M}\frac{\partial e^{-f}}{\partial \eta_g}\,ds_g
=\int_M \Delta_g e^{-f}\,dA_g=\int_M \left(\left|\nabla f\right|^2-\Delta_g f\right)e^{-f}\,dA_g,
\]
and recalling that under (\ref{normalperelman}), $\delta\left(\frac{1}{4\pi\tau}e^{-f}dA_g\right)=0$ (\cite[Eq.~12.3]{KleinerLott}),
 we can compute the contribution to the formula due
to the variation of the term
\[
\frac{1}{4\pi\tau}\int_M \left(f-2\right)\exp\left(-f\right)\,dA_g.
\]
From this, and the computations in the proof of Theorem \ref{monotonicity1}, the theorem easily follows.
\end{proof}

\section{Controlling the injectivity radius of a surface with boundary}
\label{Non-collapsing}

\subsection{An extension procedure}
Here we show an extension procedure for surfaces
with boundary of positive scalar curvature and convex boundary that allows us to control easily the maximum of the curvature
of the extension (compare with the results in \cite{Kronwith}). 

\begin{theorem}
\label{extension}
Let $\left(M,g\right)$ be a compact surface with boundary, and assume that its Gaussian curvature and
the geodesic curvature of its boundary are strictly positive. Let $z_0>0$ be arbitrary. Then there exists a closed
surface $\left(\hat{M},\hat{g}\right)$, $\hat{g}$ a $C^2$ metric, such that $M$ is isometrically embedded in $\hat{M}$,
and the Gaussian curvature $\hat{K}$ of $\hat{M}$ is strictly positive and satisfies
\[
0<\hat{K}\leq K_{+}+\frac{2\alpha_+}{z_0},
\]
where $K_+$ is the maximum of the Gaussian curvature of $M$, and $\alpha_+$ is the maximum
of the geodesic curvature of $\partial M$. 
\end{theorem}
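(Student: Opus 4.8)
The plan is to build the cap $\hat M \setminus M$ as a rotationally symmetric collar glued along $\partial M$, chosen so that its Gaussian curvature stays below the prescribed bound and so that the metric matches to $C^1$ across the seam (which yields a $C^2$ metric overall once curvature is prescribed, since a continuous curvature determines the metric coefficient up to second order). First I would treat the simplest case where $\partial M$ is a single circle of length $L$; the general case follows by doing the construction independently on each boundary component. Near $\partial M$ write the metric in boundary normal (Fermi) coordinates $g = dr^2 + \varphi(r,\theta)^2\,d\theta^2$ with $r\in(-\epsilon,0]$ on the $M$ side, where $r=0$ is the boundary and $\varphi(0,\theta)$ parametrizes $\partial M$; convexity of $\partial M$ means $-\partial_r\varphi/\varphi = k_g > 0$ at $r=0$, i.e. $\partial_r\varphi(0,\theta) = -k_g(\theta)\varphi(0,\theta) \le -\alpha_-\,\varphi(0,\theta) < 0$ in the inward direction, equivalently $\varphi$ is increasing as we move outward into the cap.

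The core of the argument is the one-dimensional construction of the profile. On the cap side I introduce a coordinate $z \ge 0$ (with $z=0$ the seam, $z$ increasing into the cap) and look for $\hat g = dz^2 + w(z,\theta)^2\,d\theta^2$ whose Gaussian curvature is $\hat K = -w_{zz}/w$. The matching conditions at $z=0$ are $w(0,\theta)$ equal to the boundary length density and $w_z(0,\theta) = \alpha(\theta) w(0,\theta)$, where $\alpha = k_g > 0$ is the geodesic curvature of $\partial M$ measured from the cap side — note the sign flips relative to the $M$ side because the outward normal reverses. I then prescribe a curvature function $\hat K(z)$ depending only on $z$ with $0 < \hat K \le K_+ + 2\alpha_+/z_0$, solve the linear ODE $w_{zz} + \hat K(z) w = 0$ with the above initial data, and arrange that for some $z = z_* \le z_0$ we reach $w(z_*,\theta) > 0$, $w_z(z_*,\theta) = 0$ — so that the cap closes up smoothly at a "pole" — while keeping $w > 0$ on $[0,z_*)$. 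Concretely one can take $\hat K$ to be a suitable constant (or mildly varying) value in $(0, K_+ + 2\alpha_+/z_0]$: with $\hat K \equiv c$ the solution is $w = A\cos(\sqrt c\, z) + B\sin(\sqrt c\, z)$ with $A = w(0,\theta)>0$, $B = \alpha(\theta)w(0,\theta)/\sqrt c > 0$, which is positive until $\sqrt c\, z = \pi - \arctan(\sqrt c\,A/B)\cdot(\text{something})$ and has a unique critical point before that; choosing $c$ comparable to $\alpha_+/z_0$ forces that critical point to occur at some $z_* \le z_0$ with $w(z_*)>0$, and the bound $c \le K_+ + 2\alpha_+/z_0$ is then checked directly. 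The closure of the cap to a smooth surface is the standard fact that $dz^2 + w^2 d\theta^2$ extends smoothly across a point where $w(z_*)>0$, $w'(z_*)=0$, $w''(z_*) \ne 0$ after reparametrizing.

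The main obstacle is the $\theta$-dependence: since both the boundary length density $w(0,\theta)$ and the geodesic curvature $\alpha(\theta)$ vary along $\partial M$, the "time" $z_*(\theta)$ at which the cap wants to close varies with $\theta$, and one cannot naively close at a single point. The fix is to first deform within a short collar on the $M$ side (or equivalently absorb a $\theta$-dependent reparametrization into the cap) so that the effective initial data becomes $\theta$-independent: for instance, prescribe in an auxiliary thin collar a curvature profile that is still bounded by $K_+ + 2\alpha_+/z_0$ and that drives $(w, w_z)$ at the far end of the collar to prescribed $\theta$-independent values; this is again a linear ODE with controllable coefficients, and because the collar can be taken arbitrarily thin its contribution to the bound on $\hat K$ can be made negligible, or simply folded into the stated constant. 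Once the data are $\theta$-independent the cap is literally a surface of revolution and everything above goes through verbatim; the remaining checks — that $\hat g$ is $C^2$ (continuity of $w, w_z, w_{zz}$ across seams, the last because $\hat K$ and $w$ are continuous), that $M$ sits isometrically inside $\hat M$, and that $0 < \hat K \le K_+ + 2\alpha_+/z_0$ everywhere — are then routine verifications from the construction.
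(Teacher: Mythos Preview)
There are two genuine gaps.

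First, your closure condition is wrong. The conditions $w(z_*,\theta)>0$ and $w_z(z_*,\theta)=0$ do \emph{not} give a pole: the metric $dz^2+w^2\,d\theta^2$ at such a point has a boundary circle of positive length $2\pi w(z_*)$, and the vanishing of $w_z$ means precisely that this circle is totally geodesic. A smooth pole would require $w(z_*)=0$ with the cone-angle condition $|w_z(z_*)|=1$ (in the rotationally symmetric normalization). The paper in fact uses exactly the situation you stumble onto---a collar ending in a totally geodesic boundary---and then \emph{doubles} the resulting surface across that boundary to obtain a closed $C^2$ surface; there is no attempt to close to a point.

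Second, your fix for the $\theta$-dependence is not workable. You propose a thin auxiliary collar, with curvature bounded by $K_++2\alpha_+/z_0$, that drives the pair $(w,w_z)$ from its $\theta$-dependent boundary values to $\theta$-independent ones. But $w(0,\theta)$ is the boundary length element, which genuinely varies with $\theta$; in a collar of width $\delta$ with $|w_{zz}|\le C\,w$ one can only change $w$ by $O(\delta)$, so a thin collar cannot erase an $O(1)$ variation in $w(0,\theta)$ while keeping the curvature bound. The paper's key idea sidesteps this entirely: rather than making the data $\theta$-independent, it lets the prescribed curvature profile in the collar \emph{depend on} $\theta$. Concretely, for each $\theta$ it picks a piecewise-linear profile $K_{y(\alpha(\theta))}(\theta,\zeta)$ with $K_{y}(\theta,0)=K(\theta)$ (so the metric is $C^2$ across $\partial M$) and $\int_0^{z_0}K_{y(\alpha(\theta))}(\theta,\zeta)\,d\zeta=\alpha(\theta)$; the warping factor $f(\theta,z)=1+\alpha(\theta)z-\int_0^z\int_0^\zeta K_{y(\alpha(\theta))}\,d\xi\,d\zeta$ then satisfies $f\ge 1$, $f_z(\theta,z_0)=0$ for \emph{every} $\theta$ simultaneously, and the collar metric $dz^2+f^2 g_{\partial M}$ has Gaussian curvature $K_{y}/f\in\bigl(0,\,K_++2\alpha_+/z_0\bigr]$. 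This is the missing mechanism: absorb the $\theta$-variation into the curvature profile, not into the geometry of the endpoint.
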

\begin{proof}
Let $\theta\in \partial M$.
Given $K\left(\theta\right)>0$ the (Gaussian) curvature function of
$M$ restricted to $\partial M$, define the following family of functions. First for $y<0$:
\[
K_{y}\left(\theta,\zeta\right)=
\left\{
\begin{array}{l}
K\left(\theta\right)+\frac{y K\left(\theta\right)}{z_0}\zeta \quad \mbox{if}\quad 0\leq\zeta<\frac{z_0}{1-y}\\
\frac{K\left(\theta\right)}{1-y} \quad\mbox{if}\quad \frac{z_0}{1-y}\leq\zeta\leq z_0 ,
\end{array}
\right.
\]
and for $y\geq 0$:
\[
K_{y}\left(\theta,\zeta\right)=K\left(\theta\right)+y\zeta,\quad 0\leq\zeta\leq z_0.
\]

Observe that for a given $\alpha>0$ there exists exactly one member of the previously
defined family, say $K_{y\left(\alpha\right)}$, such that
\begin{equation}
\label{fixedpoint}
\alpha\left(\theta\right)= \int_0^{z_0}K_{y\left(\alpha\right)}\left(\theta,\zeta\right)\,d\zeta.
\end{equation}
Indeed, notice that
 for fixed $\theta$ we have that 
\begin{equation*}
K_{y_1}\left(\theta,\zeta\right)< K_{y_2}\left(\theta,\zeta\right), \quad \zeta>0,\quad \mbox{whenever}\quad y_1<y_2,
\end{equation*}
\[
\int_0^{z_0}K_{y}\left(\theta,\zeta\right)\,d\zeta\rightarrow 0 \quad
\mbox{as}\quad y\rightarrow -\infty,
\]
\[
\int_0^{z_0}K_{y}\left(\theta,\zeta\right)\,d\zeta\rightarrow \infty \quad
\mbox{as}\quad y\rightarrow \infty,
\]
and also, for $\theta$ fixed, if $\alpha'\leq\alpha$, the corresponding functions
$K_{y\left(\alpha'\right)}$ and $K_{y\left(\alpha\right)}$ (which satisfy (\ref{fixedpoint}) for
$\alpha'$ and $\alpha$ respectively) satisfy
\[
K_{y\left(\alpha'\right)}\left(\theta,\zeta\right)\leq K_{y\left(\alpha\right)}\left(\theta,\zeta\right).
\]

We are ready to extend the metric from a convex surface with boundary to a compact 
closed surface, keeping control over the maximum of the curvature. Define the warping function
\[
f\left(\theta,z\right)= 1+\alpha\left(\theta\right)z-
\int_0^{z}\int_0^{\zeta}K_{y\left(\alpha\left(\theta\right)\right)}\left(\theta,\xi\right)\,d\xi\,d\zeta,
\]
where $\alpha\left(\theta\right)$ is the geodesic curvature of $\partial M$ at the point
$\theta\in \partial M$.
Notice that $z_0>0$ can be chosen arbitrarily, and also that $\displaystyle \frac{\partial f}{\partial z}\geq 0$ 
on $0\leq z\leq z_0$ and hence
$f\geq 1$  on the same interval. 

If $g_{\partial M}$ is the metric of $M$ restricted to its boundary, we define a
metric $\hat{g}$ on $N=\partial M\times \left[0,z_0\right]$ by
\[
\hat{g}=dz^2+f^2 g_{\partial M}.
\]
This metric defines an extension of the metric
on the surface $M$ to the surface $\hat{M}_0=M\cup N$ where $\partial M\subset M$ is
identified with $\partial M\times \left\{0\right\}\subset N$. It is clear that this
metric is $C^2$, that
$\partial \hat{M}_0= M\times\left\{z_0\right\}$ and that it is totally geodesic.

Let us now 
estimate the maximum of the curvature in our extension. 
Define
\[
K_+\left(\zeta\right)= K_++\dfrac{2\alpha_+}{z_0^2}\zeta,
\]
i.e., take from the family of functions defined above, in the case when $y\geq 0$, $K\left(\theta\right)=K_+$
and
$y=\dfrac{2\alpha_+}{z_0^2}$.
Observe that
\[
\int_0^{z_0}K_+\left(\zeta\right)\,d\zeta=K_+z_0+\alpha_+>\alpha_+.
\]
Therefore, by the properties discussed above for the family of functions $K_y$, we have 
\begin{eqnarray*}
K_{y\left(\alpha\left(\theta\right)\right)}\left(\theta,\zeta\right)&\leq& K_++\dfrac{2\alpha_+}{z_0^2}\zeta.
\end{eqnarray*}
Notice now that $K_{y\left(\alpha\left(\theta\right)\right)}=-\dfrac{\partial^2}{\partial z^2}f\left(z\right)$
and hence, by taking $\zeta=z_0$, we obtain
\[
-\frac{\partial^2}{\partial z^2}f\left(z\right)\leq K_{+} + \frac{2\alpha_+}{z_0}.
\]
Since $f\geq 1$, it follows that the Gaussian curvature of $\hat{M}_0$, which is equal
to $-f_{zz}/f$, is at most 
$K_++\frac{2\alpha_+}{z_0}$.
Given the fact that the produced extension $\hat{M}_0$ is a surface with a $C^2$ metric and
with a totally geodesic boundary, so we can double it
to obtain a closed surface endowed with a $C^2$ metric of positive Gaussian curvature, which is bounded above
by $K_++\frac{2\alpha_+}{z_0}$. 
\end{proof}

From the proof of the previous theorem we can extract the following useful corollary.
\begin{corollary}
\label{lengthshortest}
If there is a geodesic in $M$ of length $l$ that hits the boundary orthogonally at both its endpoints, then
there is a closed geodesic (which is $C^3$) in the extension $\hat{M}$ of length $2l+2z_0$. 
\end{corollary}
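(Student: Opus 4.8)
The plan is to write the closed geodesic down explicitly: run along $\gamma$, cross the collar $N=\partial M\times[0,z_0]$ radially, pick up the mirror copy of $\gamma$ living in the other half of the double $\hat M$, cross the collar back, and close up; then check that the curve thus assembled has no corners and solves the geodesic equation throughout, so that it is a genuine closed geodesic (of class $C^3$, which is the best one can hope for since $\hat g$ is only $C^2$). Two elementary features of the warped collar metric $\hat g=dz^2+f^2 g_{\partial M}$, both already implicit in the proof of Theorem \ref{extension}, do the work: since $g_{zz}\equiv1$ and $g_{z\alpha}\equiv0$ in these coordinates, all the symbols $\Gamma^k_{zz}$ vanish, so each radial curve $z\mapsto(\theta,z)$ ($\theta\in\partial M$ fixed) is a unit-speed geodesic; and, again because $g_{z\alpha}\equiv0$, every such radial geodesic meets both $\partial M\times\{0\}$ and the totally geodesic equator $\partial M\times\{z_0\}$ orthogonally.

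Concretely, write $\gamma\colon[0,l]\to M$ with $\gamma(0)=q$, $\gamma(l)=p$ on $\partial M$ and $\gamma'(0),\gamma'(l)$ normal to $\partial M$. Let $\sigma$ be the reflection of the double $\hat M$ across the equator and set $\bar M=\sigma(M)$, $\bar\gamma=\sigma\circ\gamma$, $\bar p=\sigma(p)$, $\bar q=\sigma(q)$. I would then take the loop that runs: along $\gamma$ from $q$ to $p$; up the radial geodesic over $p$ to the equator and on (still radially) down the mirror collar to $\bar p\in\partial\bar M$; along $\bar\gamma$ from $\bar p$ to $\bar q$; and back along the radial geodesic over $q$, through the equator, to $q$, closing the loop. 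Tallying the lengths of these pieces gives the asserted length $2l+2z_0$.

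The step I expect to be the crux is verifying that at each junction the assembled curve is $C^1$ and a geodesic, so that it is one smooth geodesic rather than a broken one. At a junction on $\partial M$ (or $\partial\bar M$): orthogonality of $\gamma$ makes $\gamma'(l)$ the outward unit normal of $M$ at $p$, which is precisely the initial velocity $+\partial_z$ of the radial geodesic leaving $p$ — here the velocities agree, not just the directions, since both are unit — and $\hat g$ is $C^2$ across $\partial M\times\{0\}$, this being built into $f$ through $f(\cdot,0)=1$, $\partial_z f(\cdot,0)=\alpha(\cdot)$, and $\partial_z^2 f(\cdot,0)=-K(\cdot)$ (the last because $K_{y(\alpha)}(\cdot,0)=K(\cdot)$); across the equator $\hat g$ is $C^2$ because $\partial_z f(\cdot,z_0)=0$. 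Thus the Christoffel symbols are continuous, indeed $C^1$, on all of $\hat M$. For such a metric, a curve that is $C^1$ at a point and a geodesic on either side of it is automatically $C^2$ there — the geodesic equation $\ddot c=-\Gamma(c)(\dot c,\dot c)$ forces the two one-sided limits of $\ddot c$ to coincide — and then, bootstrapping, a $C^3$ solution of the geodesic equation through that point; uniqueness of geodesics excludes a corner. Applying this at all four junctions shows the loop is a single $C^3$ closed geodesic of the stated length. No new estimate is needed: the collar of Theorem \ref{extension} was constructed precisely so that these matchings hold, and the remaining work is the careful bookkeeping of fitting the pieces together.
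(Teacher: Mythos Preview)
Your construction is precisely what the paper leaves implicit --- the corollary carries no separate proof there, only the remark that it can be ``extracted'' from the collar construction of Theorem~\ref{extension} --- and assembling $\gamma$, the two radial crossings, and the reflected arc $\bar\gamma$ is the intended argument. Your checks that the velocities match across each seam, and your bootstrap from $C^1$ to $C^3$ via the geodesic equation with $C^1$ Christoffel symbols, are correct and more careful than anything the paper writes down.

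There is, however, a slip in your length tally. Each radial crossing runs from $\partial M$ through the collar $N=\partial M\times[0,z_0]$ to the equator (length $z_0$) \emph{and then} through the mirror collar $\bar N$ down to $\partial\bar M$ (another $z_0$), so each radial piece contributes $2z_0$, not $z_0$. With two such crossings the total length of your loop is $2l+4z_0$, not $2l+2z_0$; the stated length in the corollary appears to be a typo in the paper, and you have reproduced it rather than caught it. This discrepancy is harmless for the application in Proposition~\ref{noncollapsing}, where one only needs a closed geodesic of length $2l+c\,z_0$ for some fixed constant $c$: replacing $2$ by $4$ merely perturbs the constants in the Klingenberg-type estimate.
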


\subsection{}
Let $\left(M,g\right)$ be a compact surface with boundary. We will assume that its scalar curvature is
positive as well as the geodesic curvature of its boundary. We will assume that the bounds
$0<R\leq 2K_+$ and $0\leq k_g\leq \alpha_+$ hold, and also, without loss of generality, that $K_+\geq 1$.

Before we state the main result of this section, let us review the concept of injectivity 
radius of a surface with boundary. We shall need the following definitions:
\begin{definition}
Let $M$ be a manifold with boundary and $p$ a point in its interior
($p\in M\setminus \partial M$). Define
$\iota_{\mbox{int}}\left(p\right)$ as the supremum of $r>0$ such that if
\[
\gamma: \left[0,t_{\gamma}\right]\longrightarrow M
\]
is a normal geodesic with $\gamma\left(0\right)=p$, then it is minimizing from $0$
to $\min\left\{t_{\gamma},r\right\}$, where $t_{\gamma}$ is the first time
that $\gamma$ intersects $\partial M$.

\vspace{.1in}
\noindent
We define the interior injectivity radius of $M$ as
\[
\iota_{\mbox{int}}:= \inf\left\{\iota_{\mbox{int}}\left(p\right):\, p\in M\setminus \partial M\right\}.
\]
\end{definition}

\begin{definition}
For a Riemannian manifold with boundary $M$, and $p\in \partial M$, define
$\iota_{\partial}(p)$
as the supremum  $r>0$ 
such that any minimizing geodesic $\gamma$ issuing from $p$ normally to
$\partial M$ uniquely minimizes distance to $\partial M$ up to distance r (i.e., $\gamma(0)=p$ 
and $\mbox{dist}(\gamma(r),\partial M)=r$).

\vspace{.1in}
\noindent 
Define $i_{\partial}(M)$ the boundary injectivity 
radius of $M$ (as opposed to the injectivity radius of the boundary) as
\[
\iota_{\partial}\left(M\right)=\inf \{ i_{\partial}(p):\quad p\in \partial M \}.
\]
\end{definition}

The \emph{injectivity radius} $\iota_M$ of the surface is defined as 
\[
\iota_M=\min\left\{\iota_{\partial},\iota_{\mbox{int}}\right\}.
\]
From the definition of the injectivity radius for a surface with boundary,  and the Klingenberg estimates for
the injectivity radius of a compact surface of positive curvature
(see Theorem 1.114 in \cite{Chow3} and \cite[\S 6]{Kodani}), one can conclude that in the case of a surface with boundary,
we have an estimate
from below for the injectivity radius $\iota_M$ of a surface with boundary given by
\[
\iota_M \geq \min\left\{\mbox{Foc}\left(\partial M\right), \frac{1}{2}l,\frac{c}{\sqrt{K_+}}\right\},
\]
where $\mbox{Foc}\left(\partial M\right)$ is the focal distance of $\partial M$, $l$ is the lenght of the shortest geodesic meeting $\partial M$
at its two endpoints at a right angle, and $c>0$ is a universal constant.
For the benefit of the reader, let us recall the definition of the \emph{focal distance} of $\partial M$:

\vspace{.1in}
Let $\nu$ be the normal bundle of $\partial M$, and denote by $\nu^{-}$ denote the bundle of inward pointing
normal vectors. Then we can define the exponential map
\[
\exp: \nu^{-}\longrightarrow M,
\]
as
\[
\exp\left(p\right)=\gamma_p\left(1\right),
\]
where $\gamma_p$ is a geodesic starting at $p$ whose velocity vector is normal to $\partial M$ and
points inwards. For a compact surface, there is an $\delta>0$ for which this map is well defined when
restricted to
normal vectors to $\partial M$ of length at most $\delta$, so we will
think of this map as defined over this subset of $\nu^{-}$, and which we will denote by $\nu^{-}\left(\delta\right)$. 
We say that $p\in M$ is a focal point of $\partial M$ if $p$ is a critical point of the exponential map.
The \emph{focal distance} is the minimal distance of a focal point of $\partial M$ to $\partial M$. 

\vspace{.1in}
Since it is also well known from comparison geometry that (see \cite[\S 6]{Kodani})
\[
\mbox{Foc}\left(\partial M\right)\geq \frac{1}{\sqrt{K_+}}\arctan\left(\frac{\sqrt{K_+}}{\alpha_+}\right)
\quad\left(\geq\frac{\pi}{2\sqrt{K_+}}\quad \mbox{if}\quad \alpha_+=0\right),
\]
our estimate on the injectivity radius reduces to
\[
\iota_M \geq \min\left\{\frac{1}{2}l,\frac{c}{\sqrt{K_+}}\right\},
\]
for a new constant $c$.

Our wish now is to show that 
along the Ricci flow (\ref{flow}), with initial and boundary data satisfying the requirements of Theorem \ref{maintheorem},
 on any finite interval $\left(0,T\right)$ of time where it is defined there is a constant $\kappa>0$,
which may depend on $T$ but which is otherwise independent of 
time, such that at any time $t\in\left(0,T\right)$
\begin{equation}
\label{injectivitycontrol}
\iota_{\left(M,g\left(t\right)\right)}\geq \frac{\kappa}{\sqrt{R_{\max}\left(t\right)}}
\quad
\mbox{where}\quad 
R_{\max}\left(t\right)=\max_{p\in M}R_g\left(p,t\right).
\end{equation}
The desired estimate is then a consequence of the following estimate, which is an analogue of Klingenberg's Lemma
for surfaces of positive scalar curvature and convex boundary.
\begin{proposition}
\label{noncollapsing}
Let $\left(M,g\right)$ be a 
compact surface with boundary. Assume that the scalar curvature of $M$ satisfies $0<R\leq 2K_+$,
$K_+\geq 1$,
and that the geodesic curvature of the boundary satisfies $0\leq k_g\leq \alpha_+$.
Let $l$ be the length of the shortest geodesic in $M$ whose both endpoints
are orthogonal to the boundary. There is a constant $\kappa:=\kappa\left(\alpha_+\right)>0$
such that 
$\displaystyle
l\geq \frac{\kappa}{\sqrt{K_+}}$.
\end{proposition}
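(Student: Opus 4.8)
The plan is to adapt the classical Klingenberg argument to the setting of a surface with convex boundary by combining the extension procedure of Theorem~\ref{extension} with the standard Klingenberg lemma on the closed double $\hat M$. Suppose, for contradiction, that $l < \kappa/\sqrt{K_+}$ for a constant $\kappa$ to be chosen; let $\gamma$ be a shortest geodesic in $M$ meeting $\partial M$ orthogonally at both endpoints, so $\gamma$ has length $l$. First I would fix the parameter in the extension, choosing $z_0 = \kappa'/\sqrt{K_+}$ for a suitable small $\kappa'$ (depending only on $\alpha_+$), so that by Theorem~\ref{extension} the closed surface $(\hat M,\hat g)$ has Gaussian curvature bounded by $\hat K \le K_+ + 2\alpha_+/z_0 = K_+(1 + 2\alpha_+\kappa'^{-1}) =: C_0 K_+$, with $C_0 = C_0(\alpha_+)$. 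By Corollary~\ref{lengthshortest}, the orthogonal geodesic $\gamma$ closes up in $\hat M$ to a closed $C^3$ geodesic $\hat\gamma$ of length $L := 2l + 2z_0$.

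Next I would invoke Klingenberg's lemma for the closed surface $\hat M$: since $\hat M$ has positive curvature bounded above by $C_0 K_+$, its injectivity radius satisfies $\mathrm{inj}(\hat M) \ge \pi/\sqrt{C_0 K_+}$ (the conjugate-point estimate) unless there is a closed geodesic of length $2\,\mathrm{inj}(\hat M)$; in any case the shortest closed geodesic in $\hat M$ has length at least $2\pi/\sqrt{C_0 K_+}$. Since $\hat\gamma$ is a closed geodesic, this forces
\[
2l + 2z_0 = L \ge \frac{2\pi}{\sqrt{C_0 K_+}},
\]
and hence $l \ge \pi/(\sqrt{C_0 K_+}) - z_0 = \pi/(\sqrt{C_0 K_+}) - \kappa'/\sqrt{K_+}$. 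Choosing $\kappa'$ small enough (again in terms of $\alpha_+$ only) that $\kappa'/\sqrt{K_+} \le \tfrac12\,\pi/\sqrt{C_0 K_+}$ — note $C_0$ itself depends on $\kappa'$, so this is a fixed-point-type choice, easily made by first picking $\kappa'$ small relative to $\alpha_+$ and then checking the inequality — we obtain $l \ge \tfrac{\pi}{2\sqrt{C_0 K_+}} =: \kappa/\sqrt{K_+}$ with $\kappa = \kappa(\alpha_+) > 0$, contradicting the assumption and proving the claim.

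The main obstacle I anticipate is not the curvature bookkeeping but the \emph{regularity and geodesic} issue: the doubled metric $\hat g$ is only $C^2$ and the closed curve $\hat\gamma$ from Corollary~\ref{lengthshortest} is only $C^3$, so one must check that Klingenberg's lemma — classically stated for smooth metrics — still applies. This is handled by noting that the conjugate-point comparison (Rauch) and the Klingenberg long-homotopy argument require only a $C^2$ metric and $C^1$ (indeed $C^2$) Jacobi-field theory, which $\hat g$ supplies; alternatively one smooths $\hat g$ slightly while keeping the curvature bound $\hat K \le C_0 K_+ + \varepsilon$ and the existence of a nearby short closed geodesic, then lets $\varepsilon \to 0$. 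A secondary point to verify is that $\hat\gamma$ is genuinely a closed geodesic (not merely piecewise geodesic) in $\hat M$: the orthogonality of $\gamma$ to $\partial M$ at both endpoints, together with the product-with-totally-geodesic-boundary structure of the collar $N = \partial M \times [0,z_0]$ and the reflection symmetry used in the doubling, guarantees that the two copies of $\gamma$ join smoothly across $\partial M$ through the collar — this is precisely the content of Corollary~\ref{lengthshortest} — so no corner is created and the Klingenberg estimate is applicable to $\hat\gamma$.
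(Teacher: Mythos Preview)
Your proposal is correct and follows essentially the same route as the paper: extend $(M,g)$ via Theorem~\ref{extension} with $z_0$ of order $1/\sqrt{K_+}$, invoke Corollary~\ref{lengthshortest} to obtain a closed geodesic in $\hat M$ of length $2l+2z_0$, apply Klingenberg's lemma on the closed surface with curvature bounded by a multiple of $K_+$, and then balance the free constant (your $\kappa'$, the paper's $C$) against $\alpha_+$ to extract the lower bound on $l$. The one point you omit, which the paper does address, is that Theorem~\ref{extension} requires \emph{strictly} positive geodesic curvature while the proposition only assumes $k_g\ge 0$; the paper handles this by first passing to the slightly shrunken domain $M(\epsilon)=\{\rho\ge\epsilon\}$, whose boundary has $k_g>0$ since $R>0$, and then noting that an orthogonal geodesic in $M$ is at most $2\epsilon$ longer than the corresponding one in $M(\epsilon)$.
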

\begin{proof}
For this proof we may assume $k_g>0$, as we can deal with the case $k_g\geq 0$ by considering, instead of $M$,
\[
M\left(\epsilon\right)=\left\{p\in M:\, \rho\left(p\right)\geq \epsilon\right\},
\]
where $\rho$ is the distance function to $\partial M$; for any $\epsilon>0$ small enough, $R>0$ and $k_g\geq 0$, it is not difficult to show that
$M\left(\epsilon\right)$ has boundary with strictly positive geodesic curvature. Indeed, recall that
the geodesic curvature $k_g$ of a level curve of $\rho$ (see \cite[Proposition 6.2]{Cortissoz1}) satisfies 
\[
\dfrac{\partial k_g}{\partial \rho}=\dfrac{R}{2}+k_g^2.
\]
The proposition will be proved in this case 
then, by noticing that
a geodesic hitting the boundary of $M\left(\epsilon\right)$ orthogonally at both of its endpoints is at least $2\epsilon$ shorter than a geodesic with the same property
in $M$. 

Now, let $l$ be the length of the shortest geodesic that hits the boundary of $M$ orthogonally.
Let $z_0=\frac{\alpha_+}{2C\sqrt{K_+}}$, $C>0$ a constant to be chosen, in Theorem \ref{extension} and Corollary \ref{lengthshortest}.
By Corollary \ref{lengthshortest} and Klingenberg's injectivity
radius estimate applied to $\hat{M}$, the extension of $M$ given
by Theorem \ref{extension}, we have that
\[
2l+\frac{\alpha_+}{C\sqrt{K_+}}\geq \frac{c'}{\sqrt{K_+ + C\sqrt{K_+}}},
\]
where $c'$ is a universal constant. Hence we have an estimate for $l$:
\[
l\geq \frac{c'}{2\sqrt{K_+ + C\sqrt{K_+}}}-\frac{\alpha_+}{2C\sqrt{K_+}}\geq \frac{c'}{2\sqrt{1+C}\sqrt{K_+}}-\frac{\alpha_+}{2C\sqrt{K_+}},
\]
and to get the last inequality we have used that $K_+\geq 1$. If $\alpha_+\leq \frac{c'}{4}$, we can choose $C=1$.
If $\alpha_+>\frac{c'}{4}$, choose $C>0$ so that
\[
\frac{c'}{2\sqrt{1+C}\sqrt{K_+}}-\frac{\alpha_+}{2C\sqrt{K_+}}\geq \frac{\alpha_+}{2C\sqrt{K_+}}, 
\]
by taking, for instance, $\displaystyle C= \frac{4\alpha_+^2+\sqrt{16\alpha_+^4+16\left(c'\alpha_+\right)^2}}{2\left(c'\right)^2}$. This shows the proposition.
\end{proof}

\subsection{Compactness for Ricci flows}

Before we state and prove any theorem regarding compactness properties of the Ricci flow, 
let us review the definition of a Fermi chart.
Given $p\in \partial M$ pick normal coordinates around $p$ in $\partial M$ (with respect
to the metric restricted to the boundary) $\left(B_r\left(0\right),\psi\right)$
\[
\psi:B_{r}\left(0\right)\longrightarrow \partial M, \quad \psi\left(0\right)=p,
\]
$B_r\left(0\right)$ being a ball in euclidean space centered at $0$ and of radius $r$.
Now define a map
\[
\varphi: U=B_r\left(0\right)\times \left[0,\delta\right)\longrightarrow  M
\]
as follows
\[
\varphi\left(x,s\right)=\gamma_{\psi\left(x\right),e_2}\left(s\right),
\]
where $e_2$ is the inward pointing normal at $\psi\left(x\right)$, and 
$\gamma_{x,e_2}$ is the geodesic departing from $\psi\left(x\right)$ and velocity $e_2$.
For $\delta>0$ small enough
$\varphi$ is a homeomorphism onto its image. We call $\left(U,\varphi\right)$ a Fermi chart,
and we refer to $\left(r,\delta\right)$ as its size.

We shall sketch a proof of the following result: 
\begin{theorem}
\label{compactnessthm}
Let $\left(M_k,O_k,G_k\right)$, $G_k=e^{u_k}g_{0,k}$, be a sequence of solutions to the Ricci flow
defined on time interval $\left(A,\Omega\right)$ (we shall assume
that $0\in\left(A,B\right)$). Suppose that

(i) The absolute values of $u_k$ and the sectional curvatures of the $M_k$ 
are bounded independently of $k$ at all times $A<t<\Omega$;

(ii) The geodesic curvature $k_{g_k}$ of $\partial M_k$ and its covariant derivatives with respect
to the metric $g_{0,k}$ are bounded indepentenly of $k$
; 

(iii) There exists a $\delta>0$ such that the injectivity radii, $\iota_k$, of $M_k$, and
the injectivity radii, $\iota_{b,\kappa}$, 
of the boundaries $\partial M_k$ (with respect to the induced metrics), 
at $t=0$ satisfies
\[
\iota_{k}\geq \delta, \quad \iota_{b,\kappa}\geq \delta;
\]

(iv) There exists  a $\rho>0$ such that 
given any point $q_k\in M_k$, there is a Fermi chart $\left(U,\varphi\right)$ around $q_k$ with respect to
$g_{0,k}$ of size $\left(\rho,\rho\right)$, where the following holds:

(a) There is a $\Lambda>0$ such that in the chart
\[
\Lambda^{-2}\delta_{ij}\leq \left(g_{0,k}\right)_{ij}\leq \Lambda^2\delta_{ij},
\]

(b) for each integer $l>0$ there is a $C_l>0$ independent of $k$ such that
\[
\sup_{x\in U}\left|\partial^{\beta} \left(g_{0,k}\right)_{ij}\left(x\right)\right|\leq C_l, \quad \mbox{for}
\left|\beta\right|=l.
\]
where the symbol $\partial$ represents partial differentiation in the chart.

Then there exists a subsequence which converges to a solution to the Ricci flow. 

\end{theorem}

As pointed out in \cite{Hamilton},
it suffices to prve the previous theorem in the case when
$-\infty<A<\Omega<\infty$.
Another interesting observation is that hypothesis (i) in Theorem \ref{compactnessthm} can be weakened:
we only need uniform bounds on $u_k\left(\cdot,t_k\right)$ for
a sequence of times $t_k\in\left(A,B\right)$, as long as we have a bound on the curvature, since
\[
u_k\left(x,t\right)=u_k\left(x,t_k\right)-\int_{t_k}^tR\left(x,\tau\right)\,d\tau.
\]

Following the ideas from \cite{Hamilton}, the first step to take in the prove of the previous theorem is to show that 
the sequence $\left(M_k,O_k,G_k\left(0\right)\right)$ has 
a convergent subsequence in the Gromov-Hausdorff sense. 
Let $\left(U,\varphi\right)$ be a chart. In what follows we will use the notation
\[
\left\|g\right\|_{C^{k}\left(U,\varphi\right)}=\sup_{x\in U}\sum_{\left|\beta\right|\leq k}\left|\partial^{\beta}g_{ij}\left(x\right)\right|
\]
where the $\partial$ indicates partial diferentiation in the coordinates given by the chart.

The following theorem will be useful in this endeavour.

\begin{theorem}
\label{compactnessB}
Let $\left\{\left(M_k,O_k,g_{k}\right)\right\}_{k\in\mathbb{N}}$, a sequence of pointed Riemannian manifolds with boundary.
Assume that the following holds:

(i) For all $p\in\partial M_k$ there exists a Fermi chart $\left(U,\varphi\right)$ of size $\left(\epsilon,\delta\right)$  (the size independent
of $p$ and $k$), and so that for every integer $l>0$ there exists a $C_l$ which only depends on $\left(\epsilon,\delta\right)$ such that
\[
\left\|\left(g_k\right)_{ij}\right\|_{C^l\left(U,\varphi\right)}\leq C_l,
\]

(ii) For all points $p\in M_k\setminus \partial M_k$, there is a normal coordinate chart of radius $\rho\left(\eta\right)>0$, where
$\eta$ is the distance from $p$ to $\partial M_k$ with respect to the metric $g_k$,
in which for every integer $l>0$ we have
\[
\left|\nabla^l Rm_k\right|_k<C_l,
\]
with $C_l$ independent of $k$ but which may depend on $\rho$ (here $\left|\cdot\right|_k$ represents norm 
with respect to the metric $g_k$).

Then, for any $m>0$ there is a subsequence converging in the $C^{m}$ Hausdorff-Gromov topology to either
a manifold with boundary or a manifold without boundary. 
\end{theorem}

\subsection{Proof of Theorem \ref{compactnessB}}
Theorem \ref{compactnessB} is a consequence of Theorem \ref{fundamentalcompactness} in the appendix.
Indeed, from the hypotheses of Theorem \ref{compactnessB} we have the following.

\begin{lemma}
Under the hypothesis of Theorem \ref{compactnessB} we have the following:

Let $m>0$ and $Q\in\left(1,2\right)$. Then there exists a $\rho>0$ such that any point has 
either a Fermi chart of size at least $\left(\rho,\rho\right)$ or a normal coordinate
chart of radius at least $\rho$ so that in either chart holds:
\begin{enumerate}
\item
\[
Q^{-2}\delta_{ij}\leq g_{ij}\leq Q^2\delta_{ij},\quad \mbox{and}
\]
\item
\[
\left|\partial^{m+1}g_{ij}\right|\leq Q-1.
\]
\end{enumerate}
\end{lemma}
\begin{proof}
First we treat the case of $p\in \partial M$. By the hypotheses of the lemma, we can find a Fermi chart of 
size $\left(\epsilon,\epsilon\right)$ (this size independent of $p$), $\epsilon>0$, and $C_{m+2}$ such that
\[
\left|\partial^{m+2}g_{ij}\right|\leq C_{m+2}.
\]
But then we can make $\rho<1$ a bit smaller so that (2) holds. Also, using (i) in Theorem
\ref{compactnessB} with $m=0$, since $g_{ij}(x,0)=\delta_{ij}$ in the Fermi chart, we can make
(1) hold, again by reducing $\epsilon$, and noticing that how much we have to reduce it
only depends on $C_1$.

If $p\in M$, and is not covered by a Fermi chart, then the distance to the boundary 
is at least $\rho$, so 
 we can work with normal coordinate charts of radius
$\leq \dfrac{\rho}{2}$ where we can control the metric and its derivatives 
(in the chart) in terms of bounds on the curvature and its covariant derivatives (see Theorem 4.11 in \cite{Hamilton}),
bounds which can be given in terms of bounds on the curvature (which is assumed), via Shi's local interior derivative estimates
(Theorem 6.9 in \cite{Chow3}). 
\end{proof}

\subsection{Proof of Theorem \ref{compactnessthm}}
First we have the following result.
\begin{lemma}
\label{controllingderivatives}
Assume that we have a sequence of Ricci flows $\left(M,O_k,e^{u_k}g_{0,k}\right)$, on $t\in \left(A,B\right)$. Assume
that $u_k$ is uniformly bounded and  assume that there is
a $\epsilon,\delta>0$ such that every $p\in M_k$ has a Fermi chart with respect to $g_{0,k}$ of size $\left(\epsilon,\delta\right)$. 
If for all integer $l>0$ and every Fermi coordinate
chart $\left(U,\varphi\right)$ with respect to $g_{0,k}$ of size $\left(\epsilon,\delta\right)$ we have that 
\[
\left\|g_{0,k}\right\|_{C^l\left(U,\varphi\right)}\leq N_l,
\]
and
\[
\alpha\delta_{ij}\leq \left(g_{0,k}\right)_{ij}\leq \beta\delta_{ij},
\]
then we have uniform control (independent of $k$) over the metric $g_k=e^{u_k}g_{0,k}$ 
and its derivatives (with respect to the chart) on any compact subset of $U\times\left(A,B\right)$. Namely,
For any integer $l>0$, $U'\subset U$  
such that $\overline{U'}\subset U$, and $\left[A',B'\right]\subset \left(A,B\right)$ there exists a $C_l$
which may depend on $U',A',B',\alpha,\beta$ and $N_m$
such that
\[
\left\|g_{k}\left(t\right)\right\|_{C^l\left(U',\varphi\right)}\leq C_l,\quad t\in\left[A',B'\right].
\]
\end{lemma}
\begin{proof}
This is an immediate consequence of Lemma \ref{derivativesconformalfactor} in the appendix and Shi's local interior
derivative estimates.  
\end{proof}
\begin{lemma}
\label{Fermicoordinates}
Let $\left(x,s\right)$ be Fermi coordinates, let $k_g\left(\cdot,s\right)$
be the geodesic curvature of the curve at distance $s$ from the boundary. Then we have:
\begin{enumerate}
\item 
\[
g=ds^2+\exp\left(-2\int_0^sk_g\left(x,\sigma\right)\,d\sigma\right)dx^2,
\]
\item
\[
\dfrac{\partial k_g}{\partial s}=\dfrac{R}{2}+k_g^2,
\]
\item
\[
\sqrt{\left|g\right|}=\exp\left(-2\int_0^sk_g\left(x,\sigma\right)\,d\sigma\right).
\]
\end{enumerate}
\end{lemma}
\begin{proof}
See the proof of Proposition 6.2 in \cite{Cortissoz1}.
\end{proof}

\begin{lemma}
\label{fromcovariant}
Let $\left(M,g\right)$ a Riemannian surface with boundary.
Assume we have bounds on the covariant derivatives of the curvature and on the covariant derivatives
of the geodesic curvature, i.e.,
\[
\left|\nabla^m R\right|_g\leq C_m \quad \mbox{and}\quad \left|\overline{\nabla}^mk_g\right|_g\leq C_m,
\]
where $\overline{\nabla}$ indicates covariant differentiation in $\partial M$ with respect to the induced
metric, and $\left|\cdot\right|_g$ denotes norm with respect to $g$.
Then, in a Fermi chart of size $\left(\epsilon,\delta\right)$
there are $K_l$ which may depend on $\epsilon,\delta$ and $C_m$ such that
\[
\left\|R\right\|_{C^{l}\left(U,\varphi\right)}\leq K_l.
\]
\end{lemma}
\begin{proof}
from the fact that $g_{ij}\left(x,0\right)=\delta_{ij}$ and equation (2) in Lemma \ref{Fermicoordinates} we get that there exists $Q$ which depends
on bounds on the curvature, on the geodesic curvature and the size of the Fermi chart
such that
\[
Q^{-2}\delta_{ij}\leq g_{ij}\leq Q^2\delta_{ij}.
\]
Now notice that control over the gradient of the curvature and the previous estimate allow us to control the
derivatives of the curvature in the chart. 
Indeed,
\[
\left|\partial_k R\right|= \left|g_{jk}\nabla^j R\right|\leq Q^{2}\left|\nabla R\right|.
\]
On the other hand we can also obtain bounds on $\partial_x k_g\left(x,s\right)$.
Indeed,
we have an equation
\[
\partial_s\partial_x k_g=\dfrac{1}{2}\partial_x R +2k_g\partial_x k_g
\] 
with initial condition $\partial_x k_g\left(x,0\right)$ (a bound on which we are assuming: covariant derivatives of the geodesic curvature 
of the boundary coincide in this chart with usual derivatives). 
By Lemma \ref{Fermicoordinates} we can write
\[
g_{11}=1,\quad g_{22}=\exp\left(\int_0^s -2k\left(x,\sigma\right)\,d\sigma\right).
\]
Hence,
we have that 
\[
\dfrac{dg_{22}}{ds}=-2k\left(x,s\right)\exp\left(\int_0^s -2k\left(x,\sigma\right)\,d\sigma\right),
\]
and
\[
\dfrac{dg_{22}}{dx}=-2\int \partial_x k\left(x,s\right)\,d\sigma\exp\left(\int_0^s -2k\left(x,\sigma\right)\,d\sigma\right),
\]
But then we have control over the Christoffel symbols, and this gives us control over second derivatives of the curvature
from control over the second covariant derivatives (the Hessian)
of the curvature. From this control via Lemma \ref{Fermicoordinates}
(by taking derivatives of the formulas) we get control over second derivatives
of the metric; this gives us control over the derivatives of the Christoffel symbols, and hence over
third derivatives of the curvature from bounds on third covariant derivatives of the metric. 
Proceeding inductively we obtain the result.
\end{proof}

We are ready to start with a proof of Theorem \ref{compactnessthm}. 
First we will show that $\left(M_k,O_k,G_k\left(0\right)\right)$, and for that
we will show that this sequence of pointed surfaces satisfies the
assumptions of Theorem \ref{compactnessB}:

By (i) and (iv) in Theorem \ref{compactnessthm}, the hypotheses of Lemma 
\ref{controllingderivatives} hold, and hence for each $k$ 
there is a family of Fermi charts of uniform size say $(\epsilon,\delta)$ with respect to
$g_{0,k}$, this size independent of $k$, and
which cover $\partial M_k$, where 
we have uniform control over the curvature and its derivatives on any compact interval of time,
and hence on its covariant derivatives:
Lemma \ref{controllingderivatives}  also gives uniform control over the Christoffel symbols
and its derivatives in the Fermi charts.

As we have uniform control over the supremums of  $\left|u_k\right|$ (as assumed in (i)),
the arguments in the previous paragraph imply that there exists an $\eta>0$ so that we can bound the curvature and its covariant 
derivatives, independently of $k$, in the set
\[
\left\{p\in M_k:\, \mbox{dist}_{G_k\left(0\right)}\left(p,\partial M_k\right)\leq \eta\right\},
\]
where the distance is measured with respect to $G_k\left(0\right)$.
By (iii) (in Theorem \ref{compactnessthm}), at $t=0$, there is a $\rho>0$  (which we may
assume $\leq \eta$) so that any $p\in M_k$ has
a Fermi chart of size $\left(\rho,\rho\right)$ with respect to $G_k\left(0\right)$.
Therefore,  by Lemma \ref{fromcovariant}, for any $k$, 
at time $t=0$ we have that given $p\in \partial M_k$ there is a Fermi chart, whose size is independent of $k$ and $p$,
where we have control over the curvature and its derivatives. Hence, using Lemma \ref{Fermicoordinates},
we obtain control over the metric and its derivatives in this Fermi chart, so (i) 
for $\left(M_k,O_k,G_k\left(0\right)\right)$ in Theorem \ref{compactnessB} holds.

On the other hand, given a point whose distance from the boundary is at least $\rho/2$ (with respect to $g_k\left(0\right)$),
applying the local derivative estimates gives us a 
 normal coordinate chart (whose size is independent of $k$, again by the injectivity radius bound), 
 satisfying (ii) in Theorem \ref{compactnessB}. Indeed, take $\xi>0$ small, and let $U_k\subset M_k$
be a ball of radius $r>0$ with respect to $G_k\left(0\right)$ (the minimum between $\rho/2$ and the
injectivity radius). Then since we have control on the curvature, at time $t=-\xi$,
$U_k$ contains a ball of radius $r e^{-K\xi}$ (with respect to the metric $G_k\left(-\eta\right)$), where $K$ 
is a bound on the scalar curvature $R$, and hence by the local derivative estimates
(Theorem 6.9 in \cite{Chow3}),
at time $t=0$ we have control over  the norm, with respect to $G_k\left(0\right)$, of all the covariant derivatives of the curvature in $U_k$,
and these bounds are independent of $k$. 
Therefore at $t=0$, by Theorem \ref{compactnessB}, the sequence $\left(M_k,O_k,G_k\left(0\right)\right)$ has a convergent subsequence, say
to a manifold $\left(M,O,G\left(0\right)\right)$. 

Using again Shi's local derivative estimates and Lemma 
\ref{derivativesconformalfactor}, we have uniform bounds on $G_k$ and all its derivatives on any compact subset of $M_k\times \left(A,\Omega\right)$.
Then proceeding as in \cite[\S 2]{Hamilton} (in particular see the argument starting in the paragraph right before
Lemma 2.4) we can find a subsequence of the original sequence of Ricci flows which converges on 
$M\times\left(A,\Omega\right)$ towards a solution to the Ricci flow.

\subsection{Application to blow-up limits of the Ricci flow}
\label{blowuplimits}

Recall that 
a blow up limit is constructed as follows: if $\left(0,T\right)$, $0<T<\infty$, is the maximal
interval of existence for a solution to (\ref{flow}), we pick a sequence of
times $t_j\rightarrow T$ and a sequence of points such that
\[
\lambda_j:=R_g\left(p_j,t_j\right)=\max_{M\times\left[0,t_j\right]}R_g\left(x,t\right),
\]
and then we define the dilations
\[
g_j\left(t\right):=\lambda_j g\left(t_j+\frac{t}{\lambda_j}\right), \quad 
-\lambda_j t_j<t<\lambda_j\left(T-t_j\right).
\]
So assume that we have a solution to (\ref{flow}), with convex boundary, we will show that we can construct 
blow-up limits, via an application of Theorem \ref{compactnessthm}.

In order to be able to apply Theorem \ref{compactnessthm}, we must be able to control the derivatives of the
metric in certain Fermi charts (assumption (iv) in Theorem \ref{compactnessthm}). 
Since a solution to (\ref{flow}) can be written as $g=e^ug_0$, with $u\left(x,0\right)=1$,
in the case of a blow-up sequence, by the previous discussion, we can write $g_j=e^{u_j}g_{0,j}$, 
where $g_{0,j}=\lambda_j g_0$ ($\lambda_j\rightarrow \infty$) and
$u_j\left(t\right)=u\left(t_j+\frac{t}{\lambda_j}\right)$.
Notice then that at time  $t'_j=-\lambda_jt_j$, $u_j\left(x,t'_j\right)=0$.
We can get the control needed in Lemma \ref{controllingderivatives},
as the following two lemmas show.
\begin{lemma}
Let $\left(U,\varphi\right)$,
$U=\left(-\epsilon,\epsilon\right)\times\left[0,\delta\right)$, a Fermi chart for $g_1$. Then if $g_2=\lambda g_1$,
the chart $\left(U',\varphi'\right)$ given by
\[
U'=\left(-\sqrt{\lambda}\epsilon,\sqrt{\lambda}\epsilon\right)\times\left[0,\sqrt{\lambda}\delta\right),
\]
\[
\varphi'\left(x',s'\right)=\varphi\left(\dfrac{x'}{\sqrt{\lambda}},\dfrac{s'}{\sqrt{\lambda}}\right),
\]
is a Fermi chart for $g_2$. 
\end{lemma}
This previous lemma should be compared with the scaling properties of the harmonic radius (see \cite{Anderson}).

As a consequence, via the chain rule, we have that: 
\begin{lemma}
Let $\left(U,\varphi\right)$, $U=\left(-\epsilon,\epsilon\right)\times\left[0,\delta\right)$, be the
corresponding Fermi chart for $g_0$, and
$\left(U',\varphi'\right)$ be a Fermi chart for $g_2=\lambda g_1$. Then, if 
\[
c\delta_{ij}\leq \left(g_1\right)_{ij}\leq C\delta_{ij}
\]
holds for $U$, in $U'$ holds
\[
c\delta_{ij}\leq \left(g_2\right)_{ij}\leq C\delta_{ij}.
\]
Also, we have that
\[
\left\|\partial^l \left(g_2\right)_{ij}\right\|_{C^0\left(U',\varphi'\right)}=\dfrac{1}{\lambda^{\frac{l}{2}}}\left\|\partial^l \left(g_1\right)_{ij}\right\|_{C^0\left(U,\varphi\right)},
\]
and for the geodesic curvature
\[
\left\|\partial^l k_{g_2}\right\|_{C^0\left(\left(-\sqrt{\lambda}\epsilon,\sqrt{\lambda}\epsilon\right)\times\left\{0\right\},\varphi'\right)}
=\dfrac{1}{\lambda^{\frac{l+1}{2}}}\left\|\partial^l k_{g_1}\right\|_{C^0\left(\left(-\epsilon,\epsilon\right)\times\left\{0\right\},\varphi\right)}.
\]
\end{lemma} 

Also, in a blow-up sequence $u_k$ is uniformly bounded on any bounded interval of time, since as
we said before
there is a $t_k$ where $u_k\left(x,t_k\right)=0$, and as we have uniform bounds on the
curvature, we have a uniform bound on $u_k$.
The needed control over the injectivity radius of the surface and its boundary are provided by 
the results in Section \ref{noncollapsing}. On the other hand when the boundary is convex, 
a bound from below on $\iota_M$ gives a bound from below on the length of the boundary 
(and hence on its injectivity radius, since in this case $\partial M$ is a curve), therefore,
there exists $\epsilon>0$ and $\delta>0$ so that around any point $p\in\partial M$ we have a 
Fermi chart of size $\left(\epsilon,\delta\right)$ around $p$.
This shows that all the assumptions of Theorem \ref{compactnessthm} hold, and hence
we can construct blow-up limits.

\section{Proof of Theorem \ref{maintheorem}}
\label{smoothconvergence}

As shown in the previous section, given $\left(M,g\left(t\right)\right)$ a solution to (\ref{flow}) in a maximal time
interval $0<t<T<\infty$ of positive scalar curvature and convex boundary ($k_g\geq 0$) we can produce blow up limits. 

In our case, we can classify the possible blow up limits we may obtain.
We have the following result which, with minor modifications, is essentially proved in \cite{Cortissoz1}.
\begin{proposition}
\label{blowuplimits}
Let $\left(M,g\left(t\right)\right)$, $M$ a compact surface with boundary, be a solution to (\ref{flow}).
Let $\left(0,T\right)$, $T<\infty$, 
be the maximal interval of existence of $g\left(t\right)$. Assume that there is an $\epsilon>0$ such that for
all $0<t<T$, $R_{g}>-\epsilon$, and that $k_g$ is bounded. 
There are two possible blow up limits for $\left(M,g\left(t\right)\right)$ as $t\rightarrow T$.
If the blow up limit is compact, then it is a homotetically shrinking round hemisphere with
totally geodesic boundary. If the blow up limit is non compact then it is (or its double is) a cigar
soliton. 
\end{proposition}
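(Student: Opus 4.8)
\emph{Proof idea.} The plan is to produce a blow-up limit at a curvature maximum, read off its basic features — complete, ancient, bounded and nonnegative curvature, totally geodesic boundary — and then appeal to the classification of ancient solutions on surfaces to force the limit to be either a homothetically shrinking round hemisphere or a cigar. Concretely, I would choose $t_j\to T$ and $p_j$ realizing $\lambda_j:=R_g(p_j,t_j)=\max_{M\times[0,t_j]}R_g$, set $g_j(t):=\lambda_j g\!\left(t_j+\tfrac{t}{\lambda_j}\right)$, and use the injectivity-radius estimate of \S\ref{Non-collapsing}, the derivative bounds of Appendix \ref{curvaturebounds}, and the convergence of the boundaries discussed in the remark of \S\ref{Non-collapsing} to extract a subsequential pointed limit $(M_\infty,g_\infty(t),p_\infty)$, a complete ancient solution of (\ref{flow}) with $R_{g_\infty}(p_\infty,0)=1$ and $R_{g_\infty}\le 1$ for $t\le 0$. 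Since rescaling multiplies $R$ by $\lambda_j^{-1}$, the hypothesis $R_g>-\epsilon$ forces $R_{g_\infty}\ge 0$; since it multiplies $k_g$ by $\lambda_j^{-1/2}$, the boundedness of $k_g$ forces $k_{g_\infty}\equiv 0$, so $\partial M_\infty$ (if nonempty) is totally geodesic, and then Proposition \ref{evolutionscalarcurv} gives $\partial R_{g_\infty}/\partial\eta_{g_\infty}\equiv 0$ on $\partial M_\infty$. Note also that a compact limit is, as a manifold, the original $M$, hence carries a nonempty totally geodesic boundary.

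Next, when $\partial M_\infty\neq\emptyset$ I would reflect $(M_\infty,g_\infty(t))$ across its totally geodesic boundary to obtain a boundaryless complete ancient solution $(\widehat M_\infty,\widehat g_\infty(t))$. Checking that this doubled object is a genuine smooth Ricci flow is the step I expect to require the most care: one must verify that $k_{g_\infty}\equiv 0$, the vanishing of $\partial R_{g_\infty}/\partial\eta_{g_\infty}$, and — after differentiating the boundary relations of the Appendix — the vanishing of the relevant higher odd-order normal derivatives together kill all obstructions to smoothness of the reflection, and it is precisely here that the hypothesis $k_g$ bounded is used. On $\widehat M_\infty$ the curvature $\widehat R$ is bounded, nonnegative, not identically zero (it equals $1$ at the reflected base point), and satisfies $\partial_t\widehat R=\Delta\widehat R+\widehat R^2$; the strong maximum principle then upgrades this to $\widehat R>0$ everywhere.

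Finally I would invoke the classification of ancient solutions on surfaces with bounded positive curvature. If $\widehat M_\infty$ is compact, Gauss--Bonnet gives positive Euler characteristic, so $M_\infty$ is a disk; using the monotonicity of Perelman's $\mathcal{W}$-functional from \S\ref{Perelmanmonotonicity} (whose boundary terms vanish because $k_{g_\infty}\equiv 0$) one identifies $g_\infty$ with a homothetically shrinking gradient Ricci soliton with totally geodesic boundary and Neumann potential, which on a surface must have constant curvature; hence $g_\infty(0)$ is a round hemisphere contracting homothetically. If $\widehat M_\infty$ is non-compact, the singularity cannot be of Type I (there is no complete non-compact shrinking soliton on a surface with bounded positive curvature), so it is of Type II; performing the blow-up in the Type II fashion yields an \emph{eternal} limit attaining its space-time supremum of $\widehat R$, whence Hamilton's trace Harnack inequality forces it to be a steady gradient soliton, and the only complete non-compact such soliton with positive curvature is the cigar, so $M_\infty$ (or its double) is the cigar. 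The two genuine obstacles are therefore the smoothness of the doubled flow and, in the compact case, ruling out non-round compact ancient solutions such as the Rosenau solution — which is exactly what the $\mathcal{W}$-monotonicity of the previous section is designed to handle.
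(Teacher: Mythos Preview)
Your proposal follows the paper's strategy exactly: observe that any blow-up limit is an ancient solution with nonnegative curvature (strictly positive at the base point) and totally geodesic boundary (if any), double it to reduce to the boundaryless case, and then classify. The paper's proof is two sentences long and disposes of the classification step by citing Hamilton's Theorems~26.1 and~26.3 from \cite{Hamilton2}; you instead try to re-derive that classification, using the $\mathcal{W}$-monotonicity of \S\ref{Perelmanmonotonicity} in the compact case and a Type~I/II dichotomy plus Harnack in the non-compact case. So the approaches agree on the reduction and diverge only in whether the closed-surface classification is quoted or reproved.

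Two remarks on your elaboration. First, in the non-compact case you switch midway from the standard blow-up to a Type~II blow-up, concluding that a \emph{suitably chosen} rescaling yields the cigar; as written this does not show that \emph{every} non-compact blow-up limit arising from the procedure described before the proposition is the cigar. Hamilton's theorem is stronger---it classifies complete non-compact ancient solutions on surfaces with bounded positive curvature directly---and citing it, as the paper does, avoids this gap. Second, your use of $\mathcal{W}$ in the compact case is a legitimate alternative to Hamilton's surface entropy, but note that the paper deliberately reserves the $\mathcal{W}$-monotonicity of \S\ref{Perelmanmonotonicity} for the step \emph{after} this proposition (ruling out the cigar under the extra hypotheses $\psi\ge 0$, $\psi'\le 0$ of Theorem~\ref{maintheorem}); within Proposition~\ref{blowuplimits} itself the cigar is still allowed, so no entropy or $\mathcal{W}$ argument is needed here to exclude Rosenau-type solutions---Hamilton's Theorem~26.1 already does that for compact ancient solutions of bounded positive curvature. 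Finally, your appeal to the injectivity-radius estimate of \S\ref{Non-collapsing} requires $R>0$ and $k_g\ge 0$, which are not hypotheses of this proposition; but since the proposition classifies blow-up limits rather than asserts their existence, this does not affect the classification argument.
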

\begin{proof} Just notice that any blow up limit of $\left(M,g\left(t\right)\right)$ as $t\rightarrow T$, will have nonnegative scalar curvature, 
which is strictly positive at one point,
a totally geodesic boundary, or no boundary at all, and will be defined in an interval of time $\left(-\infty, \Omega\right)$ (and $\Omega$ could be $\infty$). Then, by
doubling the manifold if needed (which as discussed in Appendix \ref{regularityricciflow}, gives a smooth solution to the Ricci flow), 
everything reduces to the boundaryless case, and \cite[Thm. ~26.1~and ~Thm. ~26.3 ]{Hamilton2} can be applied to give the 
proposition.
\end{proof}
Now we proceed with the
proof of Theorem \ref{maintheorem}. In what follows, we let $\left(M,g_0\right)$ be a compact surface with boundary of positive scalar curvature, and such that
$\partial M$ has nonnegative geodesic curvature, $\psi$ be as described in the statement of Theorem
\ref{maintheorem}, and we let $\left(M,g\left(t\right)\right)$ be the solution to (\ref{flow}) associated to
the initial data $g_0$ and the boundary data $\psi$. 
By Proposition \ref{blowupprop} we have $R>0$
and $R$ blows up in finite time, say $T$, and as discussed in the last paragraph of the previous section,
 we are allowed to take blow up limits of 
$\left(M,g\left(t\right)\right)$ as $t\rightarrow T$.
Hence, from Proposition \ref{blowuplimits}, and the monotonicity formula in \S \ref{Perelmanmonotonicity} (Theorem \ref{monotonicity2}), 
it follows that along a sequence of times
$t_k\rightarrow T$, $T<\infty$ being the maximum time of existence of $g\left(t\right)$, it holds that
\begin{equation}
\label{uniformlimit}
\lim_{k\rightarrow\infty}\frac{R_{\max}\left(t_k\right)}{R_{\min}\left(t_k\right)}=1,
\end{equation}
where, obviously,
\[
R_{\max}\left(t\right)=\max_{p\in M}R_g\left(p,t\right)
\quad \mbox{and} \quad R_{\min}\left(p,t\right)=\min_{p\in M}R_g\left(p,t\right).
\]
Indeed, as is the case with closed surfaces, the monotonicity
formula provided by Theorem \ref{monotonicity2} (as long as $\psi\geq 0$ and $\psi'\leq 0$) precludes the cigar as a blow up limit 
(see \cite[\S\S ~7.1]{Cortissoz1} and \cite[Corollary ~D.48]{Chow2}).
To see this more clearly, double the blow-up limit: it is an ancient solution to the Ricci flow 
(now without boundary), and hence it is smooth (see the discussion in Appendix \ref{regularityricciflow}); so we have two possible scenarios.
The obtained ancient solution is a Type I solution, in which case it must be a sphere; or it is a Type II solution. In 
this case, as the curvature of the solution assumes
its maximum at an origin, it must be the cigar. But then, this origin must located at the boundary of the
original blow up limit (before doubling) for
otherwise the curvature would assume two maximums, which does not happen in the cigar;
so this blow-up type II limit is half a cigar (as cut along a radial geodesic) if it has a boundary,
or just a cigar (if the limit is boundaryless). Hence, we can apply the arguments
in \cite[\S~7.1, pp. 45-46]{Cortissoz1} and define a family of 
functions $\phi$ in half the cigar (restricting the definition of $\phi$ given
for the whole cigar in the obvious way) or the whole cigar (if the limit is boundaryless) 
 to show that the functional $\mathcal{W}$ has as its infimum $-\infty$, a contradiction
with the fact that in the original Ricci flow (the one we extracted the limit from) cannot have this infimum
to be $-\infty$ as $\mathcal{W}$ is monotone along the Ricci flow (Theorem \ref{monotonicity2}), and at time $t=0$ 
this infimum is finite. 
Therefore
the only possible blow up limit is the round hemisphere with totally geodesic boundary, which implies (\ref{uniformlimit}).

The following interesting estimate on the evolution of the area $A\left(t\right):=A_{g\left(t\right)}\left(M\right)$ of $M$
under the Ricci flow (\ref{flow}), with initial condition $g_0$, can now be proved. 
\begin{proposition}
\label{areaestimate}
There exists constants $c_1,c_2>0$ such that
\[
c_1\left(T-t\right)\leq A\left(t\right)\leq c_2\left(T-t\right).
\]
\end{proposition}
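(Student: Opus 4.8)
The plan is to squeeze $\frac{dA}{dt}$ between two time-independent constants and then integrate from $t$ up to the singular time $T$. First I would pin down the topology of $M$ and the evolution of its area. By Gauss--Bonnet applied to $\left(M,g_0\right)$,
\[
2\pi\chi\left(M\right)=\int_M K_{g_0}\,dA_{g_0}+\int_{\partial M}k_{g_0}\,ds_{g_0}>0
\]
since $K_{g_0}=\frac{1}{2}R_{g_0}>0$ and $k_{g_0}\geq 0$; as $\chi\left(M\right)\leq 1$ for any compact surface with nonempty boundary, this forces $\chi\left(M\right)=1$, i.e.\ $M$ is a disk. Then, exactly as in the proof of Proposition~\ref{otherblowups}, the area of $\left(M,g\left(t\right)\right)$ obeys
\[
\frac{dA}{dt}=-\int_M R_g\,dA_g=-4\pi\chi\left(M\right)+2\int_{\partial M}k_g\,ds_g=-4\pi+2\int_{\partial M}\psi\,ds_g .
\]

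Next I would bound the boundary term uniformly in $t$. Set $\beta\left(t\right):=\int_{\partial M}\psi\left(\cdot,t\right)\,ds_{g\left(t\right)}$. Since the line element along $\partial M$ evolves by $\frac{\partial}{\partial t}\,ds_g=-\frac{1}{2}R_g\,ds_g$,
\[
\beta'\left(t\right)=\int_{\partial M}\left(\frac{\partial\psi}{\partial t}-\frac{1}{2}R_g\,\psi\right)ds_g\leq 0,
\]
because $\frac{\partial\psi}{\partial t}\leq 0$, $R_g>0$ (Proposition~\ref{blowupprop}) and $\psi\geq 0$. Hence $0\leq\beta\left(t\right)\leq\beta\left(0\right)=\int_{\partial M}k_{g_0}\,ds_{g_0}=:\beta_0$, and a second application of Gauss--Bonnet on $\left(M,g_0\right)$ yields $\beta_0=2\pi-\frac{1}{2}\int_M R_{g_0}\,dA_{g_0}<2\pi$; this strict inequality, which is precisely where the hypothesis $R_{g_0}>0$ is used, is what makes the lower constant positive. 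Combining, for every $t\in\left(0,T\right)$,
\[
-4\pi\ \leq\ \frac{dA}{dt}\ \leq\ -\left(4\pi-2\beta_0\right)\ <\ 0 ,
\]
so I would take $c_2:=4\pi$ and $c_1:=4\pi-2\beta_0=\int_M R_{g_0}\,dA_{g_0}>0$.

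Finally I would check that $A\left(t\right)\to 0$ as $t\to T$ and then integrate. The identity above gives $\int_M R_g\,dA_g=4\pi-2\beta\left(t\right)\leq 4\pi$ at every time. Taking the blow-up sequence $t_k\to T$ of the discussion preceding the statement, for which $R_{\max}\left(t_k\right)\to\infty$ and $R_{\max}\left(t_k\right)/R_{\min}\left(t_k\right)\to 1$, hence $R_{\min}\left(t_k\right)\to\infty$, one gets
\[
A\left(t_k\right)\leq\frac{1}{R_{\min}\left(t_k\right)}\int_M R_g\,dA_g\leq\frac{4\pi}{R_{\min}\left(t_k\right)}\longrightarrow 0,
\]
and since $A$ is monotone decreasing ($\frac{dA}{dt}<0$), in fact $A\left(t\right)\to 0$. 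It then only remains to integrate the two differential inequalities on $\left[t,s\right]\subset\left(0,T\right)$: one obtains $A\left(t\right)\leq A\left(s\right)+4\pi\left(s-t\right)$ and $A\left(t\right)\geq A\left(s\right)+c_1\left(s-t\right)$, and letting $s\uparrow T$ while using $A\left(s\right)\to 0$ (respectively $A\left(s\right)\geq 0$) gives $c_1\left(T-t\right)\leq A\left(t\right)\leq c_2\left(T-t\right)$. The only non-routine ingredient is the fact that $A\left(t\right)\to 0$: it is there, and essentially only there, that the blow-up analysis of the previous sections---specifically the curvature pinching $R_{\max}/R_{\min}\to 1$ along a sequence---really enters; all the rest is Gauss--Bonnet, the monotonicity of $\beta$, and an elementary integration. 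As a consistency check, the estimate matches the round-hemisphere blow-up, for which $R_{\max}\sim\left(T-t\right)^{-1}$ and the area is comparable to $4\pi/R_{\max}$.
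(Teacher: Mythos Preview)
Your proof is correct and follows essentially the same approach as the paper's: both bound $\frac{dA}{dt}$ between two negative constants via Gauss--Bonnet and the monotonicity of $\int_{\partial M} k_g\,ds_g$, invoke the blow-up analysis to conclude $A(t)\to 0$ as $t\to T$, and then integrate. You supply more detail than the paper does---identifying $\chi(M)=1$, giving explicit constants $c_1=4\pi-2\beta_0$ and $c_2=4\pi$, and deducing $A\to 0$ from $R_{\min}(t_k)\to\infty$ along the pinching sequence rather than simply citing compactness of the blow-up limit---but the skeleton is identical.
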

\begin{proof}
Since in our case any blow up limit is compact, we must have 
\[
\lim_{t\rightarrow T}A\left(t\right)=0.
\]
Since $R>0$, and $\int_{\partial M}k_g \,ds_g$ is nonincreasing, by the Gauss-Bonnet Theorem we have
the inequalities
\[
-2\pi \leq \frac{dA}{dt}\leq -c,
\]
and the result follows by integration.
\end{proof}

As a consequence of the previous proposition and from the normalization (\ref{normalization}) we can immediatly conclude the following. 
\begin{corollary}
The normalized flow exists for all time.
\end{corollary}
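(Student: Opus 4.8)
The claim to prove is the Corollary: the normalized flow exists for all time, given Proposition~\ref{areaestimate}.

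\medskip
\noindent\textbf{Proof proposal.}
The plan is to unwind the definitions in the normalization~(\ref{normalization}) and show that the rescaled time variable $\tilde t$ ranges over all of $[0,\infty)$ as $t$ ranges over $[0,T)$; since the normalized flow~(\ref{normalizedflow}) is, up to the time reparametrization and the area rescaling $\tilde g=\phi g$, the same solution as the unnormalized flow~(\ref{flow}), which exists on $[0,T)$, this will immediately give eternal existence of $\tilde g(\tilde t)$. Concretely, recall $\phi(t)$ is chosen so that $\phi(t)A(t)=2\pi$, i.e. $\phi(t)=2\pi/A(t)$, and $\tilde t(t)=\int_0^t\phi(\tau)\,d\tau$. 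So the entire content of the corollary is the divergence of the integral $\int_0^T \phi(\tau)\,d\tau=\int_0^T \frac{2\pi}{A(\tau)}\,d\tau$.

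\medskip
First I would invoke Proposition~\ref{areaestimate}, which provides $c_1(T-t)\le A(t)\le c_2(T-t)$ for all $t\in[0,T)$ with $c_1,c_2>0$. Using the upper bound $A(\tau)\le c_2(T-\tau)$ we get
\[
\tilde t(t)=\int_0^t\frac{2\pi}{A(\tau)}\,d\tau\ \ge\ \int_0^t\frac{2\pi}{c_2(T-\tau)}\,d\tau\ =\ \frac{2\pi}{c_2}\,\bigl(\ln T-\ln(T-t)\bigr)\ \xrightarrow[t\to T]{}\ \infty .
\]
Hence $\tilde t$ is an increasing (since $\phi>0$) function mapping $[0,T)$ onto $[0,\infty)$, and the inverse reparametrization $t(\tilde t)$ is defined for all $\tilde t\ge 0$. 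Therefore $\tilde g(\tilde t)=\phi(t(\tilde t))\,g(t(\tilde t))$ is a well-defined solution of~(\ref{normalizedflow}) on $[0,\infty)$, which is the assertion. (If one also wants that the normalized solution does not itself develop a singularity, note that the area-normalized metrics have fixed area $2\pi$ and, by Proposition~\ref{areaestimate} together with the bound $R\le 2K_+$ obtained from the blow-up analysis, $\tilde R_{\tilde g}=R_g/\phi=A(t)R_g/(2\pi)$ stays bounded on each finite $\tilde t$-interval; but this is not needed for mere existence.)

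\medskip
I do not expect any real obstacle here: the corollary is a direct bookkeeping consequence of the two-sided area estimate, and the only inequality used is the upper bound $A(t)\le c_2(T-t)$, whose failure to be integrable against $1/A$ near $t=T$ is exactly what forces $\tilde t\to\infty$. The one point to state carefully is that $\phi>0$ so that $\tilde t(\cdot)$ is a genuine change of variables (a diffeomorphism of $[0,T)$ onto $[0,\infty)$), which is immediate from $A(t)>0$ on $[0,T)$.
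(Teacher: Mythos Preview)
Your proposal is correct and follows essentially the same approach as the paper: both argue that $\tilde t(t)=\int_0^t \phi(\tau)\,d\tau$ diverges as $t\to T^-$ because $\phi(\tau)\sim 1/A(\tau)$ and the upper bound $A(\tau)\le c_2(T-\tau)$ from Proposition~\ref{areaestimate} makes $1/A(\tau)$ nonintegrable near $T$. The paper's proof is one line to this effect; your version simply spells out the logarithmic divergence and the monotonicity of $\tilde t$ explicitly.
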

\begin{proof}
The normalized flow exists up to time
\[
\lim_{t\rightarrow T^-}\int_{0}^{t}\frac{1}{A\left(\tau\right)}\,d \tau=\infty,
\]
by Proposition \ref{areaestimate}.
\end{proof}

Also, an estimate for the maximum of the scalar curvature can be deduced.
\begin{proposition}
\label{curvatureestimate}
There are constants $c_1,c_2>0$ such that
\[
\frac{c_1}{T-t}\leq R_{\max}\left(t\right)\leq \frac{c_2}{T-t}.
\]
\end{proposition}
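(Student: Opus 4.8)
The plan is to combine the area estimate from Proposition \ref{areaestimate} with the observation, already established in this section, that along a sequence of times $t_k\to T$ the ratio $R_{\max}(t_k)/R_{\min}(t_k)\to 1$, together with the Gauss--Bonnet identity for the total curvature. First I would note that, since $R>0$ and $\int_{\partial M}k_g\,ds_g$ is nonincreasing and bounded below by $0$ (as $\psi\geq 0$ and $\psi'\leq 0$), the Gauss--Bonnet theorem gives
\[
c\leq \int_M R_g\,dA_g=4\pi\chi(M)-2\int_{\partial M}k_g\,ds_g\leq 2\pi
\]
for a positive constant $c$ (here I am using that the blow-up limit is a hemisphere, which forces $\chi(M)=1$ and, more to the point, that the total boundary curvature stays uniformly below $4\pi\chi(M)$; if one wants to avoid invoking the limit, the lower bound $c$ on $\int_M R_g\,dA_g$ can be taken as the value at $t=0$ since the integral is nondecreasing). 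Combining $\int_M R_g\,dA_g\leq 2\pi$ with $A(t)\geq c_1(T-t)$ from Proposition \ref{areaestimate} yields
\[
R_{\min}(t)\,A(t)\leq \int_M R_g\,dA_g\leq 2\pi
\quad\Longrightarrow\quad
R_{\min}(t)\leq \frac{2\pi}{c_1(T-t)},
\]
and similarly the lower bound $\int_M R_g\,dA_g\geq c$ together with $A(t)\leq c_2(T-t)$ gives $R_{\max}(t)\geq c/(c_2(T-t))$.

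The upper bound on $R_{\max}$ and the lower bound on $R_{\min}$ of matching order are the substantive part. For this I would use the maximum principle applied to the evolution equation $\frac{\partial R_g}{\partial t}=\Delta_g R_g+R_g^2$ from Proposition \ref{evolutionscalarcurv}, together with the Neumann-type boundary condition $\frac{\partial R}{\partial\eta_g}=\psi R_g-2\psi'\geq 0$ (since $\psi\geq 0$, $R_g>0$, and $\psi'\leq 0$). Because the inward normal derivative of $R$ is nonpositive at a boundary maximum, the boundary never creates a spurious interior-type maximum that would accelerate the blow-up, so the ODE comparison $\frac{d}{dt}R_{\max}\leq R_{\max}^2$ holds in the barrier sense, giving $R_{\max}(t)\leq 1/(T-t+C)$-type control — but this only shows $R_{\max}(t)\lesssim 1/(\hat T - t)$ for the blow-up time $\hat T$ of this ODE, and one must identify $\hat T$ with $T$. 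The standard way to close this is: from $R_{\min}(t)\leq 2\pi/(c_1(T-t))$ derived above, pick the sequence $t_k\to T$ along which $R_{\max}(t_k)/R_{\min}(t_k)\to 1$, so $R_{\max}(t_k)\leq \frac{2(2\pi)}{c_1(T-t_k)}$ for $k$ large; then propagate this bound forward in time using the ODE differential inequality $\frac{d}{dt}R_{\max}\leq R_{\max}^2$ on each interval $[t_k,t]$, which is exactly the integration that yields $R_{\max}(t)\leq \frac{c_2}{T-t}$ for all $t$ sufficiently close to $T$ (and hence, after adjusting constants, for all $t\in(0,T)$). Finally, the lower bound $R_{\min}(t)\geq c_1/(T-t)$ follows by the same device: the quantity $R_{\min}(t)$ satisfies $\frac{d}{dt}R_{\min}\geq R_{\min}^2$ (again the boundary condition has the right sign), so $R_{\min}$ cannot blow up before $T$, and since it does blow up at $T$ (as $A(t)\to 0$ forces the curvature to become unbounded), integrating $\frac{d}{dt}R_{\min}\geq R_{\min}^2$ backward from a time near $T$ gives $R_{\min}(t)\geq \frac{1}{T-t}-\text{const}$; combined with $R_{\max}\lesssim 1/(T-t)$ this pins $R_{\min}$ from below at the right rate as well, and then $R_{\max}\geq R_{\min}\geq c_1/(T-t)$.

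The main obstacle I expect is the rigorous justification of the differential inequalities $\frac{d}{dt}R_{\max}\leq R_{\max}^2$ and $\frac{d}{dt}R_{\min}\geq R_{\min}^2$ as genuine (one-sided, barrier-sense) statements in the presence of the boundary — one has to check carefully that a spatial maximum of $R$ occurring on $\partial M$ does not spoil the comparison, which is precisely where the sign $\frac{\partial R}{\partial\eta_g}=\psi R_g-2\psi'\geq 0$ (outward derivative nonnegative at a maximum would be a contradiction unless the maximum is interior, so the Hopf lemma forces the relevant extrema to behave as in the closed case) is used. A secondary, more bookkeeping-type difficulty is the matching of the blow-up time $\hat T$ of the comparison ODE with the actual singular time $T$; the clean route, as sketched above, is to feed in the already-proven area estimate to get $R_{\min}(t)(T-t)\leq 2\pi/c_1$, then invoke the ratio $R_{\max}(t_k)/R_{\min}(t_k)\to 1$ along a suitable sequence, and finally propagate by the ODE — this avoids any independent argument for the location of $\hat T$.
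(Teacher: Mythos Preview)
Your lower bound on $R_{\max}$ is correct and is exactly the paper's argument: Gauss--Bonnet gives $\int_M R_g\,dA_g\geq c>0$, and combining with the area upper bound $A(t)\leq c_2(T-t)$ from Proposition~\ref{areaestimate} yields $R_{\max}(t)\geq c_1/(T-t)$.

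The upper bound, however, has a genuine gap. Your propagation step rests on the differential inequality $\frac{d}{dt}R_{\max}\leq R_{\max}^2$, and you justify it by saying that the Hopf lemma forces the spatial maximum of $R$ to be interior. This is backwards. At a boundary maximum the Hopf lemma says the \emph{outward} normal derivative is \emph{nonnegative}; the boundary condition $\frac{\partial R}{\partial\eta_g}=\psi R_g-2\psi'\geq 0$ is \emph{consistent} with that, not in contradiction with it, so nothing rules out a boundary maximum. Once the maximum sits on $\partial M$ you have no control over the normal second derivative of $R$, hence no control over the sign of $\Delta R$, and the ODE comparison is unjustified. (The sign \emph{does} work for the minimum: at a boundary minimum Hopf would force $\frac{\partial R}{\partial\eta_g}\leq 0$, contradicting the boundary condition, so $R_{\min}'\geq R_{\min}^2$ is legitimate --- but integrating that only gives $R_{\min}(t)\leq 1/(T-t)$, which you already had and which does not bound $R_{\max}$ from above.) Without the ODE for $R_{\max}$, you cannot pass from the bound $R_{\max}(t_k)\leq C/(T-t_k)$ along the special sequence $t_k$ to a bound valid for all $t$.

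The paper sidesteps this completely with a two-line contradiction: if no $c_2$ works, pick $t_j\to T$ with $R_{\max}(t_j)(T-t_j)\to\infty$; after rescaling by $\lambda_j\geq R_{\max}(t_j)$ the area satisfies $\lambda_j A(t_j)\geq c_1\,R_{\max}(t_j)(T-t_j)\to\infty$ by Proposition~\ref{areaestimate}, so the blow-up limit along this sequence would be noncompact --- contradicting the fact, established immediately before this proposition, that every blow-up limit is a round hemisphere. No maximum-principle propagation is needed. (Also note that the proposition concerns only $R_{\max}$; your final paragraph on lower bounds for $R_{\min}$ is extraneous.)
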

\begin{proof}
By the Gauss-Bonnet theorem and the fact that, under the hypothesis of Theorem \ref{maintheorem},
$\displaystyle \int_{\partial M}k_g\,ds_g$ is nonincreasing, we have that
\begin{eqnarray*}
\int_{M}R_{\max}\left(t\right)\,dA_g \geq C,
\end{eqnarray*}
and from Proposition \ref{areaestimate}, there is a $c'>0$ such that 
\[
c' R_{\max}\left(t\right)\left(T-t\right)\geq C,
\]
so the left inequality follows.

To show the other inequality we proceed by contradiction. Assume that
there is no constant $c_2>0$ for which
\[
R_{\max}\left(t\right)\leq \frac{c_2}{T-t}
\]
holds. Then we can find a sequence of times $t_j\rightarrow T$
such that
\[
R_{\max}\left(t_j\right)\left(T-t_j\right)\rightarrow \infty,
\]
and hence along this sequence the blow up limit would not be compact, as 
it would have infinite area due to Proposition \ref{areaestimate}, and this would
contradict the arguments following Proposition \ref{blowuplimits}.
\end{proof}

Corollary \ref{curvatureestimate} shows that along any sequence of times we can take a blow up limit
since for any sequence of times, the curvature is blowing up at maximal rate 
(i.e. $\sim \frac{1}{T-t}$).
By the arguments following Proposition \ref{blowuplimits} this blow up limit is a round homotetically shrinking sphere. 
This proves the following theorem.
\begin{theorem}
Let $\left(M,g_0\right)$ be a compact surface with boundary, of positive
scalar curvature and such that the geodesic curvature of $\partial M$ is nonnegative, 
and let $\psi$ be as in the statement of Theorem \ref{maintheorem}.
Then, the solution to the Ricci flow (\ref{flow}) 
with initial condition $g_0$ blows up in finite time $T$, and for the scalar curvature $R$ we have that
\[
\lim_{t\rightarrow T}\frac{R_{\max}\left(t\right)}{R_{\min}\left(t\right)}=1.
\]
As a consequence, under the corresponding normalized flow we obtain that
\[
\tilde{R}_{\max}\left(\tilde{t}\right)-\tilde{R}_{\min}\left(\tilde{t}\right)\rightarrow 0 
\quad \mbox{as}\quad \tilde{t}\rightarrow \infty.
\] 
\end{theorem}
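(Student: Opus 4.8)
The plan is to collect the facts established above. The finite-time blow-up is immediate from Proposition~\ref{blowupprop}, since the hypotheses force $R_g>0$ for as long as the flow exists, and that proposition also gives $T<\infty$. For the ratio statement, recall the two-sided rate $\frac{c_1}{T-t}\le R_{\max}(t)\le\frac{c_2}{T-t}$ of Proposition~\ref{curvatureestimate}, so that along \emph{any} sequence $t_k\to T$ one may form the dilated flows $g_k(t)=\lambda_k\,g\!\left(t_k+\frac{t}{\lambda_k}\right)$ with $\lambda_k=\max_{M\times[0,t_k]}R_g$ and, using the injectivity-radius control \eqref{injectivitycontrol} (Proposition~\ref{noncollapsing}) together with the curvature-derivative bounds of Appendix~\ref{curvaturebounds}, extract a pointed Cheeger--Gromov subsequential limit. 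Because $\lambda_k\le c_2/(T-t_k)$ while $A(t_k)\le c_2(T-t_k)$ by Proposition~\ref{areaestimate}, the rescaled surfaces have uniformly bounded area, so the limit has finite area. By Proposition~\ref{blowuplimits} such a limit is a homothetically shrinking round hemisphere with totally geodesic boundary---the non-compact alternative, a cigar soliton (or its double), being ruled out both by its infinite area and, exactly as in the closed case, by the monotonicity of Perelman's $\mathcal{W}$-functional (Theorem~\ref{monotonicity2}, valid since $\psi\ge 0$ and $\psi'\le 0$). Since $R$ equals a constant $c>0$ on that hemisphere and $R_{g_k}(\cdot,0)=\lambda_k^{-1}R_g(\cdot,t_k)$ converges uniformly on the compact limit to $c$, both $\lambda_k^{-1}R_{\max}(t_k)$ and $\lambda_k^{-1}R_{\min}(t_k)$ tend to $c$; hence $R_{\max}(t_k)/R_{\min}(t_k)\to 1$, and as the sequence was arbitrary, $R_{\max}(t)/R_{\min}(t)\to 1$ as $t\to T$.

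For the normalized flow, note first that $\tilde t\to\infty$ as $t\to T$ (a consequence of Proposition~\ref{areaestimate} through the normalization \eqref{normalization}), and that \eqref{normalization} rescales the metric at each fixed time by the spatially constant factor $\phi(t)$, so that $\tilde R=\phi^{-1}R$ and the scale-invariant ratio is preserved: $\tilde R_{\max}/\tilde R_{\min}=R_{\max}/R_{\min}\to 1$. It remains to bound $\tilde R_{\min}$ from above. Since the normalized area equals $2\pi$ and $k_{\tilde g}=\tilde\psi\ge 0$, the Gauss--Bonnet theorem gives $\int_M\tilde R\,d\tilde A=4\pi\chi(M)-2\int_{\partial M}k_{\tilde g}\,ds_{\tilde g}\le 4\pi\chi(M)$, so that $0<\tilde R_{\min}\le\frac{1}{2\pi}\int_M\tilde R\,d\tilde A\le 2\chi(M)$. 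Writing $\tilde R_{\max}-\tilde R_{\min}=\tilde R_{\min}\bigl(\tilde R_{\max}/\tilde R_{\min}-1\bigr)$ and letting $\tilde t\to\infty$ then yields $\tilde R_{\max}(\tilde t)-\tilde R_{\min}(\tilde t)\to 0$.

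The substantive work---the non-collapsing estimate of Section~\ref{Non-collapsing}, the curvature-derivative bounds of the Appendix, the classification of blow-up limits, and the exclusion of the cigar via the $\mathcal{W}$-monotonicity of Section~\ref{Perelmanmonotonicity}---has all been done beforehand, so this proof is essentially bookkeeping. The one point requiring care, and the place where a loose argument could slip, is that the blow-up must be performed along an \emph{arbitrary} sequence $t_k\to T$ and must always produce a \emph{compact} limit; this is precisely what Proposition~\ref{curvatureestimate} (together with Proposition~\ref{areaestimate}) secures, and it is also what makes the ratio argument above independent of the particular choice of the dilation factors $\lambda_k$.
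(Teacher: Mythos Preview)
Your argument is correct and follows the paper's route essentially verbatim: cite Proposition~\ref{curvatureestimate} to get the Type~I rate, form the dilations along an arbitrary sequence, invoke Proposition~\ref{blowuplimits} together with the $\mathcal{W}$-monotonicity of Theorem~\ref{monotonicity2} to force the round hemisphere as the only possible limit, and read off $R_{\max}/R_{\min}\to 1$. Your treatment of the normalized consequence (bounding $\tilde R_{\min}$ via Gauss--Bonnet and writing $\tilde R_{\max}-\tilde R_{\min}=\tilde R_{\min}(\tilde R_{\max}/\tilde R_{\min}-1)$) is more explicit than the paper's, which simply records the statement as a consequence; the only cosmetic point is that Cheeger--Gromov compactness gives convergence along a \emph{subsequence} of your arbitrary $t_k$, so the clean way to phrase the conclusion is ``every sequence has a subsequence along which the ratio tends to $1$, hence the full limit is $1$''.
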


Since one can produce bounds on the derivatives of the curvature from bounds on the curvature,
and the curvature remains bounded along the normalized flow,
we have that along any sequence of times $\tilde{t}_n\rightarrow \infty$, there
is a subsequence of times $\tilde{t}_{n_k}\rightarrow\infty$ such that  $\tilde{g}\left(\tilde{t}_{n_k}\right)$ 
(recall that $\tilde{g}$ is the normalized flow corresponding to $g$) is converging to 
a metric of constant curvature  (these metrics may be different according to the sequence 
considered). Notice that the metrics $\tilde{g}\left(t\right)$ have its maximum 
curvature uniformed bounded from below, by Propositions \ref{areaestimate} and \ref{curvatureestimate}
(this precludes the flat cylinder as a possible limit), and hence
the curvature approaches a positive constant -along any sequence of times.
To be able to conclude that these limit metrics are isometric to that
of a standard hemisphere, we must also show that the geodesic curvature 
of the boundary approaches 0. Let us now conclude the proof of Theorem \ref{maintheorem}.

\medskip
\noindent
{\bf Finishing the proof of Theorem \ref{maintheorem}.}
 The only part of the statement that has not been proved in the previous discussion is that regarding 
the behavior of the geodesic curvature.
Notice that 
\[
\frac{c}{T-t}\leq \phi\left(t\right)\leq \frac{C}{T-t},
\]
where $\phi$ is the normalizing factor defined in the introduction, and hence
\[
T-t\leq Te^{-c\tilde{t}},
\]
which shows that
\[
k_{\tilde{g}}\leq C\sqrt{T-t}\psi\leq ce^{-c\tilde{t}}\psi.
\]
Since $\psi\geq 0$ and $\psi'\leq 0$, it remains bounded, and the theorem follows.
\hfill $\Box$

\medskip
\noindent

\section{Proof of Theorem \ref{maintheorem2}}
\label{neggeod}

In this section, we let $g\left(t\right)$ be the solution to (\ref{flow}) in the two-ball $D$, with initial and boundary data
as described in the hypotheses of Theorem \ref{maintheorem2}. It is clear, from uniqueness, that this solution
is also rotationally symmetric. Let $A\left(t\right):=A_{g\left(t\right)}\left(D\right)$ be the area of $D$ with respect to $g\left(t\right)$.
Given the fact that
\[
-c\leq\frac{d}{dt}A\leq -C<0
\]
to show that the normalized flow does exists for all time, all we must prove is that $A\left(t\right)\rightarrow 0$ as $t\rightarrow T$.

First notice that the scalar curvature of the solution to (\ref{flow}), with initial data $g_0$ and boundary data $\psi$ as in Theorem \ref{maintheorem2},
 blows up in finite time
by Proposition \ref{otherblowups}. Let us denote by $\mathcal{R}\left(t\right)$ the radius of $D$ with respect to $g\left(t\right)$; 
 by comparison geometry we have that at any time $\mathcal{R}\geq \frac{\pi}{2\sqrt{R_{\max}\left(t\right)}}$, and 
using Hamilton's arguments (see \cite[\S~5]{Chow0}), it can be shown that for points at distance at least $\frac{1}{4}\mathcal{R}$  from the
boundary, the injectivity radius is conveniently bounded from below.
To obtain Fermi charts of uniform size around any boundary point, notice that all we need is a bound
from below on the length of the boundary which conveniently rescales with the curvature: This is given 
by Corollary 1.2 (c) in \cite{AlexanderBishop0}

 Hence we can take a blow up limit of $\left(M,g\left(t\right)\right)$
as $t\rightarrow T$. This blow up
limit might be compact, and in this case 
it is a round hemisphere, and as we did before, it can be shown then that $A\left(t\right)\rightarrow 0$
as $t\rightarrow T$. If this blow up limit is non compact it must be the cigar.  In this case, it is not
difficult to prove that we can take as an origin for the blow up limit the
center of $D$. To study this case we define the following quantities which depend on $g\left(t\right)$:
\[
I\left(r,t\right)= \frac{L_g\left(\partial D_r\right)^2}{A_g\left(D_r\right)},
\quad \overline{I}\left(t\right)=\inf_{0\leq r\leq \mathcal{R}} I\left(r,t\right),
\]
where $D_r\subset D$ is the geodesic ball of radius $r$ centered at the center of $D$, $L_g\left(\partial D_r\right)$ is the length of $\partial D_r$ 
and $A_g\left(D_r\right)$ is the area of $D_r$ both with respect to the metric $g\left(t\right)$. 
There are two cases to be considered.
 The first case to consider is when there is a $\delta>0$ such that this infimum is attained at $r\left(t\right)\in \left[0,\mathcal{R}\right)$
on the time interval $\left(T-\delta, T\right)$.
If $r=0$, then $I=4\pi$;
if $r>0$, we have the following formula.
\begin{lemma}
\label{isoperimetricevolution}
If $\overline{I}$ at time $t$ is attained at $r\in\left(0,\mathcal{R}\right)$ then $I$ satisfies an evolution equation
\begin{equation}
\label{isoperimetric}
\frac{\partial}{\partial t}\log I\left(r,t\right) = \frac{\partial^2}{\partial r^2}\log I\left(r,t\right) +
\frac{1}{A_g\left(D_r\right)}\left(4\pi - I\left(r,t\right)\right).
\end{equation}
\end{lemma}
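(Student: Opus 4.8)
The plan is to use the rotational symmetry to reduce the problem to one spatial variable. Write $g(t)$ in geodesic polar coordinates centered at the center of $D$: $g(t)=dr^{2}+\varphi(r,t)^{2}\,d\theta^{2}$, with $r$ the $g(t)$-distance to the center and $\theta\in[0,2\pi)$. Then $L_{g}(\partial D_{r})=2\pi\varphi$, $A_{g}(D_{r})=2\pi\int_{0}^{r}\varphi\,dr'$ (so $I=L^{2}/A$, which is why $I\to 4\pi$ as $r\to 0$), the geodesic curvature of $\partial D_{r}$ is $k=\varphi_{r}/\varphi$, and the Gaussian curvature is $K=-\varphi_{rr}/\varphi$ (so $R=2K$); in particular $\partial_{r}A=L$, $\partial_{r}L=kL$ and $\partial_{r}k=-K-k^{2}$. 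The delicate point is that the coordinate $r$ is itself time-dependent, since $g(t)$-distances change along the flow, so the first thing to do is to express the time derivative at fixed geodesic radius in terms of the ``material'' one. Using a fixed background coordinate $\rho$ with $g_{\rho\rho}=e^{2\lambda}$, the equation $\partial_{t}g=-Rg$ gives $\partial_{t}\lambda|_{\rho}=-K$ and $\partial_{t}\varphi|_{\rho}=-K\varphi$, and from $r=\int_{0}^{\rho}e^{\lambda}\,d\rho'$ one finds $\partial_{t}r|_{\rho}=-\int_{0}^{r}K\,dr'=:-\Phi(r,t)$, whence
\[
\left.\frac{\partial}{\partial t}\right|_{r}=\left.\frac{\partial}{\partial t}\right|_{\rho}+\Phi\,\frac{\partial}{\partial r}.
\]

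Granting this, the rest is a computation. First, $\partial_{t}L|_{r}=2\pi\,\partial_{t}\varphi|_{r}=L(k\Phi-K)$. Next, since in the geodesic-radius variable $A=\int_{0}^{r}L\,dr'$ depends on $t$ only through the integrand, $\partial_{t}A|_{r}=\int_{0}^{r}L(k\Phi-K)\,dr'$; integrating by parts the term $\int_{0}^{r}(\partial_{r'}L)\,\Phi\,dr'$ (using $\partial_{r'}L=kL$ and $\partial_{r'}\Phi=K$) and then invoking the Gauss--Bonnet theorem for the disk $D_{r}$,
\[
\int_{D_{r}}K\,dA_{g}+\int_{\partial D_{r}}k_{g}\,ds_{g}=2\pi,\qquad\text{equivalently}\qquad 2\pi\int_{0}^{r}K\varphi\,dr'=2\pi-kL,
\]
reduces this to $\partial_{t}A|_{r}=L\Phi-4\pi+2kL$.

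Finally I would put the pieces together for $I=L^{2}/A$. From $\partial_{t}\log I=2\,\partial_{t}L/L-\partial_{t}A/A$, together with $\partial_{r}\log I=2k-L/A$ and $\partial_{r}^{2}\log I=-2K-2k^{2}-kL/A+L^{2}/A^{2}$, an elementary cancellation (the terms carrying $\Phi$ and the two occurrences of $L^{2}/A^{2}$ collect into a single multiple of $2k-L/A$) gives the identity, valid at every $r\in(0,\mathcal R)$,
\[
\frac{\partial}{\partial t}\log I=\frac{\partial^{2}}{\partial r^{2}}\log I+\frac{1}{A_{g}(D_{r})}\bigl(4\pi-I\bigr)+\Bigl(\frac{\partial}{\partial r}\log I\Bigr)(\Phi+k).
\]
By hypothesis $\overline I(t)$ is attained at an interior radius $r\in(0,\mathcal R)$; there $r$ is a critical point of $r\mapsto I(r,t)$, so $\partial_{r}\log I=0$, the last term drops, and (\ref{isoperimetric}) follows. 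I expect the genuine work to be confined to the middle steps: getting the factor $\Phi$ in the change of time-gauge right and carrying out the Gauss--Bonnet simplification of $\partial_{t}A$, and then recognizing that the leftover term in the assembled identity is proportional to $\partial_{r}\log I$ and therefore vanishes exactly at the minimizing radius. Everything else is bookkeeping.
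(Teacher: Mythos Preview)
Your proof is correct, and in one respect it is more careful than the paper's. The paper lists the formulas $\partial_r L=kL$, $\partial_r^2 L=-\int_{\partial D_r}K\,ds=\partial_t L$, $\partial_r A=L$, $\partial_r^2 A=\int_{\partial D_r}k\,ds$, $\partial_t A=-4\pi+2\int_{\partial D_r}k\,ds$, imposes the first-order condition $2L^{-1}\partial_r L=A^{-1}\partial_r A$ at the minimizing radius, and declares the result a straightforward calculation. The point of contrast is the meaning of $\partial_t$: the paper's formulas $\partial_t L=-KL$ and $\partial_t A=-4\pi+2kL$ are the time derivatives at a \emph{fixed curve} (Hamilton's convention for isoperimetric estimates), not at fixed geodesic radius; this is why no drift term $\Phi$ appears there. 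You instead differentiate at fixed geodesic radius, which forces you to introduce $\Phi=\int_0^r K\,dr'$ and leads to the pointwise identity
\[
\partial_t\log I=\partial_r^2\log I+\frac{4\pi-I}{A}+(\Phi+k)\,\partial_r\log I
\]
valid for all $r\in(0,\mathcal R)$, from which the lemma follows at the critical radius. Both routes land on the same equation at the minimum precisely because the two gauges for $\partial_t$ differ by a multiple of $\partial_r$, which annihilates $\log I$ there; your version makes this mechanism explicit, while the paper's is shorter. You also correctly read $I=L^2/A$ (the paper's displayed definition $I=L/A$ is a typo, as both the limit $I\to4\pi$ at $r=0$ and the formula in the lemma require the squared length).
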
 
\begin{proof}
We let $L=L_g\left(\partial D_r\right)$, $A=A_{g}\left(D_r\right)$, $k$ be the geodesic curvature of $\partial D_r$
and  $K$ be the Gaussian curvature of $D$.
We have the following set of formulas
\[
\frac{\partial L}{\partial r}=\int_{\partial D_r} k \,ds=kL,\quad \frac{\partial ^2 L}{\partial r^2}=-\int_{\partial D_r} K\, ds=\frac{\partial L}{\partial t},
\]
and
\[
\frac{\partial A}{\partial r}=L, \quad \frac{\partial^2 A}{\partial r^2}=\int_{\partial D_r} k\,ds, \quad
\frac{\partial A}{\partial t}=-4\pi+2\int_{\partial D_r} k\,ds.
\]
Notice that at the value of $r$ where the infimum is attained we have
\[
0=\frac{\partial}{\partial r}\log I=\frac{2}{L}\frac{\partial L}{\partial r}-\frac{1}{A}\frac{\partial A}{\partial r},
\quad \mbox{so we have,}\quad \frac{2}{L}\frac{\partial L}{\partial r}=\frac{1}{A}\frac{\partial A}{\partial r}.
\]
Formula (\ref{isoperimetric})
now follows from all these identities by a straightforward calculation.
\end{proof}
Clearly Lemma \ref{isoperimetricevolution} precludes the fact that $\overline{I}\rightarrow 0$, since it does imply that 
$\overline{I}$ increases if
$\overline{I}<4\pi$. Therefore, the blow up limit in this case cannot be the cigar, hence it is a round hemisphere, and we would be done.
We are left with one more possibility: the infimum is achieved  at $r=\mathcal{R}$ for a sequence of times
of times $t_k\rightarrow T$; if this is so, then since for the cigar $\overline{I} = 0$, if $A\left(t\right)\not\to 0$
as $t\rightarrow T$, we must have
$L\left(\partial D\right)\rightarrow 0$ as $t\rightarrow T$. Under the hypotheses of Theorem \ref{maintheorem2}, using the Maximum Principle, it is not difficult
to show that the scalar curvature remains uniformly bounded from below
on $\left(0,T\right)$, and therefore $\mathcal{R}$ is uniformly bounded  above 
in $0<t<T<\infty$. But then we have the following lemma (see \cite{Cortissoz1}).
\begin{lemma}
\label{lengtharea}
Let $g_k$ be a sequence of rotationally symmetric metrics on the two-ball $D$. Assume that there is a constant $\epsilon>0$ such that
$R_{g_k}\geq -\epsilon$ and
$k_{g_k}\geq -\epsilon$, and that the radius of $D$ with respect with this sequence of metrics is uniformly bounded from above
by $\rho>0$. 
Then if the length of the boudary of $D$, $L_{g_k}\left(\partial D\right)$, goes to $0$ as $k\rightarrow \infty$, then
$A_{g_k}\left(D\right)\rightarrow 0$.
\end{lemma}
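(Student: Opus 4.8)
The plan is to reduce the lemma to a one–variable ODE comparison. Since each $g_k$ is rotationally symmetric and (at least) $C^2$ on the disk, I would write it in geodesic polar coordinates about the center, $g_k=dr^2+\varphi_k(r)^2\,d\theta^2$ with $r\in[0,\mathcal R_k]$, where $\mathcal R_k\le\rho$ is the radius of $D$ with respect to $g_k$ and $\varphi_k>0$ on $(0,\mathcal R_k]$. In these coordinates everything becomes explicit: $L_{g_k}(\partial D)=2\pi\varphi_k(\mathcal R_k)$, $A_{g_k}(D)=2\pi\int_0^{\mathcal R_k}\varphi_k$, the Gaussian curvature is $-\varphi_k''/\varphi_k$ so $R_{g_k}=-2\varphi_k''/\varphi_k$, and (exactly as in the proof of Lemma \ref{isoperimetricevolution}, from $\partial_r L=\int k\,ds=kL$) the geodesic curvature of $\partial D$ is $\varphi_k'(\mathcal R_k)/\varphi_k(\mathcal R_k)$. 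Setting $\ell_k:=\varphi_k(\mathcal R_k)=\tfrac1{2\pi}L_{g_k}(\partial D)\to0$, the goal is to prove $\int_0^{\mathcal R_k}\varphi_k\to0$.

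Translating the hypotheses, $R_{g_k}\ge-\epsilon$ reads $\varphi_k''\le\tfrac{\epsilon}{2}\varphi_k$ on $[0,\mathcal R_k]$, and $k_{g_k}\ge-\epsilon$ reads $\varphi_k'(\mathcal R_k)\ge-\epsilon\ell_k$. The key point is that the lower curvature bound controls $\varphi_k''$ \emph{from above}, which is the correct sign to run a comparison (or maximum principle) \emph{inward from the boundary}, where one simultaneously controls the value $\varphi_k(\mathcal R_k)=\ell_k$ and the derivative $\varphi_k'(\mathcal R_k)\ge-\epsilon\ell_k$. Concretely, put $c=\sqrt{\epsilon/2}$, let $\psi_k(s):=\varphi_k(\mathcal R_k-s)$, and let $u(s)=\ell_k\cosh(cs)+\tfrac{\epsilon\ell_k}{c}\sinh(cs)$ be the solution of $u''=c^2u$ with $u(0)=\ell_k$, $u'(0)=\epsilon\ell_k$. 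Then $w:=\psi_k-u$ satisfies $w(0)=0$, $w'(0)\le0$, and $w''\le c^2w$. Factoring $\partial_s^2-c^2=(\partial_s+c)(\partial_s-c)$ (equivalently, applying the integrating factors $e^{\pm cs}$, using $w(0)=0$ so there is no sign ambiguity) gives $w\le0$, i.e. $\varphi_k(\mathcal R_k-s)\le\ell_k\big(\cosh(cs)+\tfrac{\epsilon}{c}\sinh(cs)\big)\le\ell_k\big(1+\tfrac{\epsilon}{c}\big)e^{cs}$; since $0\le s\le\mathcal R_k\le\rho$, this yields a uniform pointwise bound $\varphi_k(r)\le C\ell_k$ on $[0,\mathcal R_k]$ with $C=\big(1+\tfrac{\epsilon}{c}\big)e^{c\rho}$ depending only on $\epsilon$ and $\rho$.

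The conclusion is then immediate: $A_{g_k}(D)=2\pi\int_0^{\mathcal R_k}\varphi_k(r)\,dr\le2\pi\mathcal R_k\,C\ell_k\le2\pi\rho\,C\ell_k\to0$ as $\ell_k\to0$. The step I expect to require the most care is the bookkeeping of signs and the direction of the comparison: one must run it from the boundary toward the center and exploit that $R_{g_k}\ge-\epsilon$ controls $\varphi_k''$ from above — a comparison anchored at the center (via the usual $\varphi_k(r)\le c^{-1}\sinh(cr)$ Sturm estimate) would only reprove a \emph{uniform} area bound, not its vanishing. The geodesic–curvature hypothesis enters in exactly one place, preventing $\varphi_k'(\mathcal R_k)$ from being too negative, and the radius bound $\mathcal R_k\le\rho$ is used only at the end to absorb the factor $e^{cs}$ into the constant $C$.
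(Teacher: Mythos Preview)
Your argument is correct and is precisely the comparison argument the paper invokes in one line: after rescaling to $\epsilon=1$ the paper simply asserts $A_{g_k}(D)\le 2\pi\int_0^{\rho}L_{g_k}(\partial D)e^{r}\,dr\le 2\pi e^{\rho}\rho\,L_{g_k}(\partial D)$, which is exactly the inequality you obtain (with slightly different constants) by running the ODE comparison for $\varphi_k$ inward from $\partial D$. You have supplied the details the paper omits; the approach is the same.
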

\begin{proof}
By rescaling we may assume that $\epsilon=1$. Hence a comparison argument shows that,
\[
A_{g_k}\left(D\right)\leq 2\pi\int_0^{\rho}L_{g_k}\left(\partial D\right) e^{r}\,dr\leq 2\pi e^{\rho}\rho L_{g_k}\left(\partial D\right),
\]
and the conclusion of the lemma follows.
\end{proof}
The previous lemma shows that then we must have $A\left(t\right)\rightarrow 0$ as $t\rightarrow T$. This proves that we have, in any case,
$A\left(t\right)\rightarrow 0$ as $t\rightarrow T$ and the theorem follows. 

\appendix 
\section*{Appendix}
\appendix 

\section{Derivative estimates}
\label{curvaturebounds}
Let $\left(M,g\left(t\right)\right)$ be a solution to the Ricci flow (\ref{flow}).
In this appendix we will show how to produce bounds on the derivatives of the
curvature of $g\left(t\right)$ in terms of bounds on the curvature and on the geodesic curvature
of the boundary and its derivatives. The ideas we use are quite standard, as we produce certain quantities
involving derivatives of $R$ (clearly inspired
by the quantities used in the case of closed manifolds), 
and then we compute some differential inequalities; we will have to make computations
on the boundary of $M$ in order to apply the Maximum Principle to these 
differential inequalities, and even
though a bit tedious, these computations are certainly straightforward.

We now fix some notation.
Let $\rho\left(P,t\right)$ be the distance function to the boundary of $M$ with respect to the metric $g\left(t\right)$. We define the set
\[
M\left[0,\delta\right]=\rho^{-1}\left(\left[0,\delta\right],0\right),
\]
and we will refer to it as the collar of the boundary, or simply as the collar. We also will use
the notation,
\[
M_{c}=\rho^{-1}\left(\left\{c\right\},0\right)
\]
for the level curves of $\rho\left(\cdot,0\right)$.
All the quantities and symbols below (as for instance $\nabla$) depend on the time varying metric $g$, however
we will not use any subindex to indicate such dependence. The symbol
$\overline{\nabla}$ represents covariant differentiation in the boundary with respect to the induced metric.

Also, we must point out that
similar estimates can be obtained for solutions to the normalized Ricci flow (\ref{normalizedflow}),
either by working with it directly or using the normalization to pass back to the unnormalized flow,
but we will leave that to the reader.

Finally, recall that
 $\nu$ is the normal bundle of $\partial M$, and we denote by $\nu^{-}$ denote the bundle of inward pointing
normal vectors. We define the exponential map
\[
\exp: \nu^{-}\longrightarrow M,
\]
as
\[
\exp\left(p\right)=\gamma_p\left(1\right),
\]
where $\gamma_p$ is a geodesic starting at $p$ whose velocity vector is normal to $\partial M$ and
points inwards. For a compact surface, there is an $\delta>0$ for which this map is well defined when
restricted to
normal vectors to $\partial M$ of length at most $\delta$, so we will denote by $\nu^{-}\left(\delta\right)$
this subset of $\nu^-$.
Without much further ado, let us start with our estimates.
 
\subsection{First order derivative estimates}
Our purpose now is to show an estimate
on the first derivative of the curvature near the boundary
of $M$. We shall assume for simplicity that $k_g'=0$, but
it will be clear from the proof that if we have bounds
on $k_g'$ and its derivatives on any interval of time, with a judicious modification,
the following results are still valid. Notice the similarity with the analogous local interior estimate (see, for instance,
\cite[Theorem ~13.1]{Hamilton2} and \cite[Theorem ~14.14]{Chow2}).
\begin{theorem}
\label{firstderivative}
Let $\left(M,g\left(t\right)\right)$, $M$ compact, be a solution to the Ricci flow (\ref{flow})
on $\left[0,T^*\right]$. 
Let $\epsilon>0$ be such that at $t=0$, 
\[
\exp: \nu^{-}\left(\delta\right)\longrightarrow M\left[0,\delta\right]
\]
is a diffeomorphism. Let $K,\alpha>0$ be such that $\left|R\right|\leq K$ on $M\times\left[0,T^*\right]$ and
 $\left|k_g\right|,\left|\overline{\nabla}k_g\right|\leq \alpha$ on $\partial M\times\left[0,T^*\right]$. Then there is a 
$\tau:=\tau\left(K,\alpha\right)$ so that  
we can estimate
\[
\left|\nabla R\right|^2 \leq \frac{C\left(K,\alpha, \delta, T^*\right)}{t},
\quad
\mbox{on}\quad M\left[0,\delta\right]\times\left(0,\tau\right],
\]
where $C\left(K,\alpha, \delta, T^*\right)$ is a constant that depends on the given parameters.
\end{theorem}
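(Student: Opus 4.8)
The plan is to adapt Hamilton's interior Bernstein--Bando--Shi estimate to the collar $M[0,\epsilon]$, introducing a cutoff that both localizes in space and, crucially, kills the boundary term in the maximum principle by exploiting the Robin condition from Proposition~\ref{evolutionscalarcurv}. First I would recall the evolution equation $\partial_t R=\Delta R+R^2$ and compute, in the usual way, the evolution of $|\nabla R|^2$, obtaining
\[
\frac{\partial}{\partial t}|\nabla R|^2=\Delta |\nabla R|^2-2|\nabla^2 R|^2+2R|\nabla R|^2+(\text{lower order}),
\]
where on a surface the Ricci/curvature terms arising from commuting derivatives are all controlled by $K=\sup|R|$. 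Then, following the classical recipe, I would consider the quantity
\[
F=t|\nabla R|^2+\Lambda R^2
\]
for a constant $\Lambda=\Lambda(K)$ chosen large enough that the good term $-2t|\nabla^2 R|^2$ absorbs the cross terms and $F$ satisfies a differential inequality of the form $\partial_t F\le \Delta F+C(K)F+C(K)$ in the interior.

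The main new ingredient, and the step I expect to be the principal obstacle, is handling the boundary $\partial M$. On $\partial M$ one has $\partial R/\partial\eta_g=k_g R-2k_g'=\psi R$ (using $k_g'=0$ as assumed), so $\partial_{\eta}(R^2)=2R\,\partial_\eta R=2\psi R^2\ge 0$ when $\psi\ge 0$ — but in general $\psi$ need only be bounded, so this term must be dominated, not signed away. Worse, $\partial_\eta|\nabla R|^2$ involves $\nabla^2 R$ evaluated at the boundary and has no a priori sign. The remedy is the standard one of using a spatial cutoff $\varphi=\varphi(\rho)$ supported in $M[0,\epsilon]$ with $\varphi\equiv 1$ on $M[0,\epsilon/2]$ and, to handle the boundary term, building into $F$ a boundary-flattening correction: replace $F$ by
\[
G=\varphi^2\bigl(t|\nabla R|^2+\Lambda R^2\bigr)+N\,\rho\,R^2
\]
or, more robustly, work with the tangential and normal derivatives of $R$ separately — writing $|\nabla R|^2=(\partial_\eta R)^2+|\nabla^\top R|^2$ near $\partial M$ — since $\partial_\eta R=\psi R$ is already controlled by $K$ and $\sup|\psi|\le\alpha$ directly, so only the tangential part $|\nabla^\top R|^2$ needs a Bernstein argument, and its Neumann-type boundary derivative can be computed from differentiating the Robin condition along $\partial M$, producing terms bounded by $\alpha$, $K$, and the geometry of the fixed collar (this is where $\epsilon$ and the initial collar diffeomorphism enter). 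The normal geodesic coordinates provided by $\exp:\nu^-(\epsilon)\to M[0,\epsilon]$ give uniformly controlled metric coefficients on the short time interval $(0,\tau]$, $\tau=\tau(K,\alpha)$, so all the geometric background quantities (Hessian of $\rho$, the functions $\varphi'$, $\varphi''$) stay bounded.

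With these choices, the maximum principle applies: either the maximum of $G$ on $\overline{M[0,\epsilon]}\times[0,\tau]$ is attained at an interior point or at an initial time, in which cases the differential inequality $\partial_t G\le\Delta G+C(K,\alpha,\epsilon)G+C(K,\alpha,\epsilon)$ (valid wherever $\varphi>0$) together with $G|_{t=0}=0$ on the support yields $G\le C(K,\alpha,\epsilon,T^*)$ on $(0,\tau]$; or it is attained on $\partial M\times(0,\tau]$, where by construction $\partial_\eta G\le 0$ (the correction term $N\rho R^2$ contributes $-N R^2$, and $N$ is chosen to beat the bounded contribution $2\alpha$ from $\partial_\eta(\varphi^2\Lambda R^2)$ and the controlled contribution from the tangential part), contradicting the Hopf boundary point lemma unless $G$ is constant, which again gives the bound. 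Restricting to $M[0,\epsilon/2]$ where $\varphi\equiv 1$ and dividing by $t$ gives
\[
|\nabla R|^2\le\frac{C(K,\alpha,\epsilon,T^*)}{t}\quad\text{on }M[0,\epsilon/2]\times(0,\tau],
\]
which after relabeling $\epsilon/2\mapsto\epsilon$ is the claimed estimate. The only genuinely delicate bookkeeping is verifying the sign of $\partial_\eta G$ on $\partial M$, which rests entirely on differentiating the Robin boundary condition $\partial_\eta R=\psi R$ tangentially and normally and keeping track of the second fundamental form of $\partial M$ — exactly the kind of straightforward-but-tedious boundary computation the appendix is designed to carry out.
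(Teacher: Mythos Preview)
Your overall strategy is close to the paper's --- split $\nabla R$ into its tangential and normal parts near $\partial M$, run a Bernstein argument on each, and arrange the test function so that the Hopf lemma applies on $\partial M$ --- but the boundary correction you propose is not strong enough, and this is a genuine gap. The normal derivative of $|\nabla^{\top}R|^2$ on $\partial M$ is \emph{not} bounded purely in terms of $\alpha$ and $K$: commuting $\partial_{\eta}$ with the tangential derivative brings in the second fundamental form, producing a term $c\,k_g\,|\nabla^{\top}R|^2$ which is proportional to the very quantity you are trying to estimate and, in the convex case $k_g\geq 0$, has the wrong sign. Your correction $N\rho R^2$ contributes only $-NR^2$ to $\partial_{\eta}G$ at $\partial M$, which cannot absorb a term scaled by $|\nabla^{\top}R|^2$. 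The paper's device is to attach an exponential weight $e^{\alpha\rho}$ to the tangential gradient, setting $F=t\,e^{\alpha\rho}|\nabla^{\top}R|^2+AR^2+B\rho K^2$. Since $\rho$ increases inward, $\partial_{\eta}(e^{\alpha\rho})=-\alpha e^{\alpha\rho}$ on $\partial M$, which supplies exactly the term $-\alpha\,t\,|\nabla^{\top}R|^2$ needed to dominate $k_g\,t\,|\nabla^{\top}R|^2$; the constant $B$ then kills the remaining bounded terms, giving $\partial_{\eta}F\leq 0$.

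Two further points. First, the Robin identity $\partial_{\eta}R=\psi R$ holds only on $\partial M$, not throughout the collar, so $|\nabla^{\perp}R|$ at interior collar points is not ``directly controlled''; the paper runs a separate Bernstein argument for $G=t|\nabla^{\perp}R|^2+AR^2$, using the Robin condition to bound $G$ itself on $\partial M$ (as Dirichlet data) rather than to control $\partial_{\eta}G$. Second, you localize with a cutoff $\varphi$ vanishing on $M_{\epsilon}$, whereas the paper dispenses with a cutoff altogether: it treats $M[0,\epsilon]$ as a domain with two boundary components, uses the Hopf argument on $\partial M$, and on the inner component $M_{\epsilon}$ bounds $F$ directly via Shi's interior estimates. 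Either localization scheme can be made to work, but the exponential weight is the essential missing ingredient in your proposal.
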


\begin{proof}
Define on $M\left[0,\delta\right]$ for $t\in \left[0,T^*\right]$ the function
\[
F=te^{\alpha \rho}\left|\nabla^{\top}R\right|^2 + AR^2+B\rho K^2,
\]
where $\rho$ is the (time dependent) distance to the boundary function, $\nabla^{\top} R$ is the component of the gradient of $R$ 
with respect to $g\left(t\right)$ which is tangent 
to the
level surfaces of $\rho\left(\cdot,0\right)$ (at time $t$ with respect to $g\left(t\right)$), and $A$ and $B$ are positive constants.
 $F$ satisfies a differential inequality in $M$,
namely
\begin{eqnarray*}
\frac{\partial F}{\partial t}&\leq& \Delta_g F + 
\left(cte^{\alpha \rho}R+2\alpha^2te^{\alpha\rho} +e^{\alpha\rho}+t\left|\frac{\partial e^{\alpha \rho}}{\partial t}\right|
+t\left|\Delta_g e^{\alpha \rho}\right|-2A\right)\left|\nabla R\right|^2\\
&&-2\alpha\nabla \rho \nabla F +2\alpha\nabla\rho\left(2AR\nabla R + 2AK^2\nabla \rho\right)\\
&&+2AR^3
+\left|\frac{\partial \rho}{\partial t}\right| B K^2+\left|\Delta_g \rho\right| B K^2.
\end{eqnarray*}
where $c$ is a constant that can be computed, but whose actual value is
irrelevant. Now we need to control a few quantities from the previous expression.
The term $AR\nabla R$ can be dealt with via the inequality
\[
\left|AR\nabla R\right|\leq \frac{1}{2}\left(A^2R^2+\left|\nabla R\right|^2\right).
\] 
To control the term $\Delta_g \rho$ we use the identity 
\[
\Delta_g \rho\left(P\right)= -k_g\left(P\right),
\]
where $k_g\left(P\right)$ is the geodesic curvature of the level surface of $\rho$ that passes through
$P$. On the other hand the geodesic curvature of the level
curves of $\rho$ can be controled in terms of the curvature, a bound on the
geodesic curvature of $\partial M$, and $\epsilon>0$, since we have an equation
\[
\frac{\partial k_g}{\partial \rho}=\frac{R}{2}+k_g^2.
\]
Also, it is not difficult to obtain the estimate $\left|\frac{\partial \rho}{\partial t}\right|\leq K\rho$,
 and we have $\left|\nabla \rho\right|=1$. Now, by a convenient choice of $\tau\left(K,\alpha\right)>0$ and $A>0$, 
we obtain a differential inequality for $F$, valid on $0<t \leq \tau\left(\alpha,K\right)$,  namely
\[
\frac{\partial F}{\partial t}\leq \Delta_g F -2\alpha\nabla\rho\nabla F+C\left(A,B,\alpha\right)K^3.
\]
From now on, we will use the convention that $C$ is a constant (that may change from estimate to estimate)
that depends on, or any subset of, $A, B, \epsilon, K, \alpha, T^*$.

On the other hand, on $\partial M$, we have the identity (here $e_1$ is a unit
vector tangent to $\partial M$, and $e_2$ is the unit outward normal)
\[
\frac{\partial F}{\partial \eta_g}=-2t\alpha e^{\alpha\rho}\left|\nabla^{\top} R\right|^2+
2te^{\alpha\rho}k_g \left|\nabla^{\top}R\right|^2+
2te^{\alpha\rho}\left|\overline{\nabla}k_g\right|\left|\nabla^{\top} R\right|
+2Ak_gR^2-BK^2,
\]
where we have used the fact that 
\begin{eqnarray*}
\dfrac{\partial}{\partial \eta_g}\left|\nabla^{\top}R\right|^2&=& \nabla_{e_2}\left<\nabla_{e_1}R,\nabla_{e_1}R\right>\\
&=&2\left<\nabla_{e_2}\nabla_{e_1}R,\nabla_{e_1}R\right>\\
&=&2\left<\nabla_{e_1}\nabla_{e_2}R,\nabla_{e_1}R\right>\\
&=&2\left<\nabla_{e_1}\left(k_gR\right),\nabla_{e_1}R\right>\\
&=&2k_g\left|\nabla^{\top}R\right|^2+2\nabla_{e_1}k_g\cdot\nabla_{e_1}R,
\end{eqnarray*}
which gives an estimate
\[
\dfrac{\partial}{\partial \eta_g}\left|\nabla^{\top}R\right|^2\leq 
2k_g\left|\nabla^{\top}R\right|^2+2\left|\overline{\nabla}k_g\right|\cdot\left|\nabla^{\top}R\right|.
\]
So by taking $B\geq 2A\alpha+\dfrac{2\alpha T^*}{K}$ we have that
$
\displaystyle
\frac{\partial F}{\partial \eta_g}\leq 0.
$

In the part of the boundary of the collar that lies in the interior of the manifold, i.e. $M_{\epsilon}$, by Shi's interior estimates
(\cite[Theorem ~6.9]{Chow3}), we have,
\begin{eqnarray*}
F&\leq& C\left(A,B,\alpha\right)K^3 +C te^{C\left(K,\alpha,\epsilon\right)}K^2\left(\frac{1}{\delta^2}+\frac{1}{t}+K\right)\\
&\leq& C\left(A,B,\alpha\right)K^3+Ce^{C\left(K,\alpha,\delta\right)}K^2\left(\frac{T^*}{\delta^2}+1+KT^*\right).
\end{eqnarray*}
Applying the Maximum Principle yields
\[
F\leq \max_{t=0} F+tC\left(A,B,\alpha\right)K^3+
C\left(A,B,\alpha\right)K^3 + Ce^{C\left(K,\alpha,\delta\right)}K^2\left(\frac{T^*}{\delta^2}+1+KT^*\right),
\]
from which we obtain
\[
\left|\nabla^{\top}R\right|^2\leq 
\frac{C}{t}\left[K^3+K^2\left(\frac{T^*}{\delta^2}+1+K\right)\right].
\]

Now, let $\nabla^{\perp} R$ be the part of the gradient of $R$ perpendicular at time $t$ 
to the level curves of $\rho\left(\cdot,0\right)$. Define
\[
G=t\left|\nabla^{\perp}R\right|^2+AR^2.
\]
Again we have that $G$ satisfies a differential inequality
\[
\frac{\partial G}{\partial t}\leq \Delta_g G+2AR^3;
\]
on the other hand using the expresion for $\frac{\partial R}{\partial \hat{\eta}_g}$ on
$\partial M$ and Shi's interior derivative estimates on $M_{\delta}$,
in both components of the boundary of the collar we have an estimate
\[
G\leq T^*\alpha^2 K^2+AK^2 + CK^2\left(\frac{T^*}{\delta^2}+1+K\right)+2AK^3.
\]
This gives an estimate for $G$ similar to the estimate obtained for $F$. This proves the theorem.
\end{proof}

{\bf Remark.} The reader must notice the following. As the Ricci flow on surfaces can be written as
$e^ug_0$, if we assume bounds on the curvature, this implies bounds on the conformal factor. Therefore,
for the purposes of this paper, a bound on $\overline{\nabla}k_g$ can be safely changed
for a bound on the (tangential component of the)  gradient of $k_g$ with respect to 
the fixed metric $g_0$.

\subsection{Higher derivative estimates}

Here the previous procedure does not seem to work. The reason for this is that we do not know how to compute
higher order covariant derivatives of quantities on the boundary with just the knowledge of the
value of these quantities at the boundary.

To overcome this difficulty, we shall work using local coordinates. Let us write as usual $g=e^{u}g_{0}$ for the solution on the Ricci flow on the surface:
We shall show how to bound derivatives of $u$ when restricted to a Fermi chart. 
So, let $\left(U,\varphi\right)$ be a Fermi chart, with $U=\left(-\epsilon,\epsilon\right)\times\left[0,\delta\right)$
\[
\varphi:\,U\longrightarrow M,
\]
where if $p=\varphi\left(x,s\right)$, then $s$ is the distance from $p$ to the boundary.
In Fermi coordinates the metric is written as
\[
g=dx^2+f\left(x,s\right)^2\,ds^2.
\]
Since the Laplace operator is written as
\[
\Delta = e^{-u}\dfrac{1}{\sqrt{\left|g_0\right|}}\partial_i\left(g_0^{ij}\sqrt{\left|g_0\right|}\partial_j\right),
\]
in Fermi coordinates we have
\[
\Delta_{g_0} = \dfrac{\partial^2}{\partial s^2}-k_{g_0}\left(x,s\right)\frac{\partial}{\partial s}
+\dfrac{f_x}{f}\dfrac{\partial}{\partial x}+\dfrac{1}{f^2}\dfrac{\partial^2}{\partial x^2},
\]
where $k_{g_0}$ represents the geodesic curvature 
of the curve at distance $s$ from the boundary with respect to $\dfrac{\partial}{\partial s}$.

The main idea we shall employ to obtain estimates on $u$ is to use classical regularity results (Schauder estimates) for parabolic equations
with oblique boundary conditions (\cite[Chapter IV]{Lady}).
To be able to use these results and start a bootstrapping argument, we need to have uniform control in time
over the H{\"o}lder norm of the partial derivatives of $u$ in the Fermi chart.  Our main tool to obtain
this control will be Theorem 4.1 in \cite{Lieberman}, so we must verify the structure conditions
imposed as assumptions in this theorem
for the following elliptic operator
\[
\Delta_{g_0}u-R_{g_0}-Re^{u}, 
\]
with boundary condition
\[
-\dfrac{\partial u}{\partial s}-2k_{g}e^{\frac{u}{2}}-2k_{g_0}.
\]
In order to verify these structure conditions, we must have control over $R$, $R_{g_0}$, and 
their first derivative in the chart, and on $k_{g_0}$, 
$k_g$, and certain H{\"o}lder norms of their first derivatives in the chart. We
will assume bounds on the $C^{k}$ seminorms of $g_0$
in a Fermi chart $\left(U,\varphi\right)$ (this
takes care of the needed control on $R_{g_0}$
and $k_{g_0}$), and we also are assuming bounds on $k_g$ and its derivatives on $\partial M\times\left(0,T\right)$, and
on $R$ and $\nabla R$ (via Theorem \ref{firstderivative}) on $M\times\left(0,T\right)$. Notice that the bound given for $\nabla R$, once we have control over $u$
and $g_0$,
gives control over $\partial R$ in the chart. Indeed,
If in the chart we have that 
\[
\Lambda^{-2}\delta_{ij}\leq {g_0}_{ij}\leq \Lambda^2\delta_{ij}, 
\]
and $\left|u\right|<M$, then
\[
\left|\partial_i R \right|\leq \left|e^{u}\left(g_0\right)_{ij}\nabla^j R\right|
\leq \Lambda^2e^M\left|\nabla R\right|.
\]
These assumptions are enough to verify the structure conditions in Theorem 4.1 in \cite{Lieberman}. We leave the verification of these structure conditions 
to the interested reader.

As before, we shall use the notation
\[
\left\|g\right\|_{C^{k}\left(U,\varphi\right)}=\sup_{x\in U}\sum_{\left|\beta\right|\leq k}\left|\partial^{\beta} g\left(x\right)\right|
\]
to denote the $C^k$-norm of the metric $g$ written in local coordinates, and a few times for a function $u$
\[
\left\|u\right\|_{C^k_{\tau}\left(U,\varphi\right)}
\]
to denote the same $C^{k}$-norm with $t=\tau$ fixed (as $u$ depends also on $t$).
The H{\"o}lder spaces $C^{k,\alpha}\left(U,\varphi\right)$ are defined as usual.

We will also make use of the parabolic H{\"o}lder spaces $H^{\left(l\right)}=H^{l,\frac{l}{2}}$ as defined in pages 6-9 in \cite{Lady}:

\vspace{.1in}
Let $Q_T=\Omega\times\left(0,T\right)$, $\rho_0>0$ and $0<\alpha<1$ fixed. Define
\[
\left<u\right>^{\left(\alpha\right)}_{x,Q_T}=\sup_{
\tiny{\begin{array}{c}\left(x,t\right),\left(x',t\right)\in \overline{Q}_T,\\
\left|x-x'\right|<\rho_0\end{array}}}
\dfrac{\left|u\left(x,t\right)-u\left(x',t\right)\right|}{\left|x-x'\right|^{\alpha}},
\]
\[
\left<u\right>^{\left(\alpha\right)}_{t,Q_T}=\sup_{
\tiny{\begin{array}{c}\left(x,t\right),\left(x',t\right)\in \overline{Q}_T,\\
\left|x-x'\right|<\rho_0\end{array}}}
\dfrac{\left|u\left(x,t\right)-u\left(x,t'\right)\right|}{\left|t-t'\right|^{\alpha}}.
\]
For an integer $j$ define
\[
\left<u\right>_{Q_T}^{\left(j\right)}=\sum_{2r+\left|\beta\right|= j}\sup_{Q_T}\left|\partial_t^{r}\partial_x^{\beta}u\left(x,t\right)\right|.
\] 
Given $l+\alpha$, $l$ a positive integer as before and $0<\alpha<1$, define
\begin{eqnarray}
\label{holdernorm}
&\left\|u\right\|_{H^{\left(l+\alpha\right)}\left(\overline{Q}_T\right)}& \notag\\
&=&\\
&\sum_{2r+\left|\beta\right|=l}\left<\partial_t^r\partial_x^{\beta}u\right>^{\left(\alpha\right)}_{x,Q_T}+
\sum_{0<l+\alpha-2r-\left|\beta\right|<2}\left<\partial_t^r\partial_x^{\beta}u\right>^{\left(\frac{l+\alpha-2r-\left|\beta\right|}{2}\right)}_{t,Q_T}+
\sum_{j=0}^{l}\left<u\right>_{Q_T}^{\left(j\right)}.&\notag
\end{eqnarray}
The dependence of these norms
on $\rho_0>0$ is not important in our case, as different choices
produce equivalent norms.
We can then define the space $H^{\left(l+\alpha\right)}\left(\overline{Q}_T\right)$ as the space of all functions
with finite norm (\ref{holdernorm}). Finally the set $H^{\left(l+\alpha\right)}\left(Q_T\right)$ as the space of functions
belonging to $H^{\left(l+\alpha\right)}\left(\overline{Q'}\right)$ for any subdomain $Q'$ such that $\overline{Q'}\subset Q$.
In our notation (for these parabolic spaces) we will suppress the dependence on $\varphi$.

We are ready to state an proof the following:
\begin{theorem}
\label{derivativesconformalfactor}
Write $g=e^{u}g_0$ on $M\times\left[0,T\right)$. 
Let $\left(U,\varphi\right)$ be a Fermi chart with respect to $g_0$ of size $\left(\epsilon,\delta\right)$.
Assume that $u$ and $R$ are uniformly bounded on
$U\times\left(0,T\right)$, and that we have bounds for all integers $l>0$ and $0<\alpha<1$ on 
$\left\|k_g\right\|_{H^{l+\alpha}\left(\left(\left(-\epsilon,\epsilon\right)\times\left\{0\right\}\right)\times\left(0,T\right)\right)}$ 
(recall $k_g=\psi$ in (\ref{flow})). 
Let $\tau$ be such that $0<\tau<T$, and $U'$ be an open subset of $U$ such that $\overline{U'}\subset U$.
Then
for each integer $k>0$ there is a $C_k$ which depends only on the bound on $R$, on the bounds on $k_g$ and its
derivatives,
on bounds on $\left\|g_0\right\|_{C^{l}\left(U,\varphi\right)}$, on $U'$ and on $\tau$ and $T$, such that
\[
\left|\partial^k u\left(x,t\right)\right|\leq C_k \quad \mbox{on}\quad U'\times\left[\tau,T\right),
\]
where $U'=\left(-\frac{\epsilon}{2},\frac{\epsilon}{2}\right)\times \left[0,\frac{\delta}{2}\right)$ and $\partial$ 
represents partial differentiation in the chart.
\end{theorem}

\begin{proof}

Using the chart, we shall work now in the set $U\subset \mathbb{R}_+^2$. Define the following sets
\[
U^{k}=\left(-\epsilon\left(\dfrac{1}{2}+\dfrac{1}{2^{k+1}}\right),\epsilon\left(\dfrac{1}{2}+\dfrac{1}{2^{k+1}}\right)\right)\times 
\left[0,\delta\left(\dfrac{1}{2}+\dfrac{1}{2^{k+1}}\right)\right),
\]
\[
\partial'U^{k}=
\left(-\epsilon\left(\dfrac{1}{2}+\dfrac{1}{2^{k+1}}\right),\epsilon\left(\dfrac{1}{2}+\dfrac{1}{2^{k+1}}\right)\right)
\times \left\{0\right\},
\]
and 
\[
W^k=U^k\times\left(\tau_k,T\right),
\quad \tau_k=\tau\left(\dfrac{1}{2}-\dfrac{1}{2^{k+1}}\right).
\]
Fix $\tau_1>0$. Then for any $t\geq \tau_1$,
having control over $R_{g_0}$ and $\partial R_{g_0}$ (as we are assuming this control over the metric $g_0$),
and over $R$ and $\nabla R$ (and this control is uniform in $t$ for $0<\tau_1\leq t<T$ once we have fixed $\tau_1$)
from the elliptic equation
\[
\Delta_{g_0}u-R_{g_0}=Re^u
\]
with oblique boundary condition
\[
\dfrac{\partial u}{\partial \eta_{g_0}}=2k_{g}e^{\frac{u}{2}}-2k_{g_0},
\]
we can obtain an estimate on the $C^{1,\alpha}$ norm of $u$ at any time $t>0$ 
(see Theorem 4.1 in \cite{Lieberman} and the paragraph right after the statement of
the theorem). 
To be more precise, we obtain, for any $t\geq\tau_1 >0$, a bound $C_{1,\alpha}$ (uniform in $t$)
\begin{eqnarray*}
&\left\|\partial_k u\right\|_{C_t^{\alpha}\left(U^{1},\varphi\right)}&\\
&\leq&\\
& C_{1,\alpha}\left(U,U^{1},\mu_0,\mu_1,
\left\|k_g\right\|_{H^{\left(1+\alpha\right)}\left(\left(\left(-\epsilon,\epsilon\right)\times\left\{0\right\}\right)\times\left(0,T\right)\right)}
,\left\|g_0\right\|_{C^{2,\alpha}\left(U,\varphi\right)}
, \tau_1,T\right),&
\end{eqnarray*}
were $\mu_0$ is a bound on $R$ and $\mu_1$ is a bound on $\partial R$ on $U\times\left(\tau_1,T\right)$,
From Theorem \ref{firstderivative}, we can even suppress the dependence on $\mu_1$ as we can give it in terms of $\mu_0$, and 
on $\tau_1$ and $T$.
Also, from the bounds on $R$, and equation
\[
\dfrac{\partial u}{\partial t}=-R,
\]
we find a bound on $\dfrac{\partial u}{\partial t}$. This shows that
$u\in H^{\left(1+\alpha\right)}\left(W^1\right)$, and that bounds in this parabolic H{\"o}lder space are controlled 
by bounds on $R$ and on $g_0$ and its derivatives. 

Now we use standard parabolic estimates for the problem (in particular Theorem 10.1 in \cite[Chapter IV]{Lady}: 
For the relevant definitions please consult also \cite[Chapter I-\S 1]{Lady})
\[
\left\{
\begin{array}{l}
\dfrac{\partial u}{\partial t}=e^{-u}\left(\Delta_{g_0} u-R_{g_0}\right) \quad \mbox{in}\quad U\times\left(0,T\right)\\
\dfrac{\partial u}{\partial \eta}=2k_{g}e^{\frac{u}{2}}-2k_{g_0}\quad \mbox{on}\quad \left(\left(-\epsilon,\epsilon\right)\times\left\{0\right\} 
\right)\times\left(0,T\right).
\end{array}
\right.
\]
As we have $H^{\left(1+\alpha\right)}$ bounds on the boundary terms (here we use again our assumptions on $k_g$),
this gives us a bound on $\left\|u\right\|_{H^{\left(2+\alpha\right)}\left(W^2\right)}$ and of course this bound depends on all
the previous bounds, namely 
\begin{eqnarray*}
&\left\|u\right\|_{H^{\left(2+\alpha\right)}\left(W^2\right)}&\\
&\leq&\\
& C\left(W^1,W^2,\left\|u\right\|_{H^{\left(1+\alpha\right)}\left(W^1\right)},\left\|k_{g}\right\|_{H^{\left(1+\alpha\right)}\left(\left(\partial'U^1\right)\times\left(\tau_1,T\right)\right)},
\left\|g_0\right\|_{C^{2,\alpha}\left(U,\varphi\right)}\right)& 
\end{eqnarray*}
From this bound on $\left\|u\right\|_{H^{\left(2+\alpha\right)}\left(W^2\right)}$ we can obtain 
 bounds on $\left\|u\right\|_{H^{3+\alpha}\left(W^3\right)}$, and this bounds also depend only on $R$. 
 We can continue this process (bootstrapping), by repeated applications of Theorem 10.1 in in \cite[Chapter IV]{Lady}, to obtain bounds
 on higher H{\"o}lder norms of $u$. 
 Indeed, given a bound on $\left\|u\right\|_{H^{\left(n+\alpha\right)}\left(W^n\right)}$, \cite[Theorem 10.1, Chapter IV]{Lady} gives us a bound
 \begin{eqnarray*}
&\left\|u\right\|_{H^{\left(n+2+\alpha\right)}\left(W^{n+1}\right)}&\\
&\leq&\\ 
&C\left(W^n,W^{n+1},\left\|u\right\|_{H^{\left(n+\alpha\right)}\left(W^n\right)},
\left\|k_{g}\right\|_{H^{\left(n+1+\alpha\right)}\left(\left(\partial' U^n\right)\times\left(\tau_n,T\right)\right)},
\left\|g_0\right\|_{C^{n+2,\alpha}\left(U,\varphi\right)}\right).&
\end{eqnarray*}
So we finally can bound any number 
of derivatives of $u$ in terms of bounds on $R$, on $k_g$ and its derivatives, and on $g_0$ and its derivatives in the chart $\left(U,\varphi\right)$
(in ever smaller domains, but this is not a problem, as all this domains
contain $U'\times\left[\tau,T\right)$). This proves the Theorem.
\end{proof}

\section{On the double of a surface}

Here we discuss the regularity of the metric of the double of a manifold.
Given $M$ a manifold with boundary, we define its double as follows:

We let $M_j=M\times\left\{j\right\}$, $j=0,1$, and define an equivalence relation
\[
\left(m,i\right)\sim \left(m,j\right)
\]
if $i=j$, or if $i\neq j$ and $m\in \partial M$.
Then the double is the set
\[
\tilde{M}= \left(M_0\cup M_1\right)/\sim
\]
endowed with the quotient topology. To give a smooth structure to $\tilde{M}$, given a chart
\[
\psi: U\subset \mathbb{R}_{+}^n\longrightarrow M
\]
we define a chart on $\tilde{M}$ as follows.
If $U\cap \left\{x^n=0\right\}=\emptyset$, we obtain two charts
by defining
\[
\tilde{\psi}_j=\left(\psi\left(x\right),j\right).
\]
If $U\cap \left\{x^n=0\right\}\neq\emptyset$, let $U^*\subset \mathbb{R}_-^n$ be defined as
\[
U^*=\left\{x:\, \left(x^1,\dots,x^{n-1},-x^n\right)\in U\right\},
\]
and define
\[
\tilde{\psi}:\,U\cup U^*\longrightarrow \tilde{M}
\]
as 

\[
\tilde{\psi}\left(x^1,\dots,x^n\right)=
\left\{
\begin{array}{l}
\left(\psi\left(x\right),0\right) \quad \mbox{if}\quad x^n\geq 0\\
\left(\psi\left(x\right),1\right) \quad \mbox{if}\quad x^n\leq 0
\end{array}
\right.
\]
This gives a smooth structure to $\tilde{M}$. We call $\tilde{M}$ the double of $M$

However, when $M$ is also endowed with a metric, 
the regularity of the double metric is a different matter, and in general it might not be smooth. 
Let us first review how to double the metric. We shall explain the
construction near the boundary, being relatively obvious the construction away from the boundary.

Pick a Fermi chart on $M$,
\[
\varphi:\left(-\epsilon,\epsilon\right)\times \left[0,\delta\right)\longrightarrow M,
\]
and as explained above, construct the double chart. In this new chart the metric 
is written as
\[
g=ds^2+f\left(x,s\right)^2 dx^2, \quad \mbox{if}\quad s\geq 0,
\]
and
\[
g=ds^2+f\left(x,-s\right)^2 dx^2, \quad \mbox{if}\quad s\leq 0.
\]
We shall write
\[
g_{11}=1,\quad g_{22}= f^2, \quad g_{12}=0=g_{21}.
\]
Notice that at $\partial M$, $g$ is the euclidean metric (i.e. $f\left(x,0\right)=1$). So we have obtained at least 
a continuous metric on the double. However, if we assume that the original metric is smooth
we can show the following:

\begin{proposition}
\label{metricdouble}
Assume that $k_g=0$, then the metric on the double is at least $C^{2,1}$. Here
$C^{2,1}$ is the space of functions twice differentiable and whose second
derivatives
are locally (i.e., in the chart) Lipschitz continuous.
\end{proposition}

\begin{proof}
Let $\left(U,\varphi\right)$ be a Fermi chart
around a point $p\in \partial M$. Then the metric is written as a $2\times 2$ 
positive definite matrix $g_{ij}$, as shown above. All we must prove is that when doubled, each of these
functions is $C^{2,1}$. 
The fact that $k_g=0$ implies that the metric is $C^2$
(see \cite{Escobar}, the argument in pp 43-44), and as we already know that the $g_{ij}$'s are smooth away 
from $\partial M$ (being
the original metric smooth),
the proposition will follow from Lemma \ref{Lipschitz}.
\end{proof}

Before stating and proving Lemma \ref{Lipschitz}, let us introduce some notation. Given a point $x=\left(x^1,x^2\right)$,
define
\[
\left\|x\right\|=\sqrt{\left(x^1\right)^2+\left(x^2\right)^2},
\]
and for a fixed $r>0$ we let
\[
B_{+}=\left\{x\,| \left\|x\right\|<r, \quad x^2\geq 0\right\},
\] 
and in the same way define $B_-$. Given $x=\left(x^1,x^2\right)$ define $x^{*}=\left(x^1,-x^2\right)$.
For a pair of functions
$f_+:B_{+}\longrightarrow \mathbb{R}$ and $f_-:B_{-}\longrightarrow \mathbb{R}$ which coincide
over $\left\{x^2=0\right\}$ define
\[
f_+\cup f_-:B_{+}\cup B_{-}\longrightarrow \mathbb{R}
\]
as 
\[
f_+\cup f_-\left(x\right)= f_{\pm}\left(x\right)\quad
\mbox{if}\quad x\in B_{\pm}.
\]
Then we have the following elementary lemma.
\begin{lemma}
\label{Lipschitz}
Let $f_+:B_+\longrightarrow \mathbb{R}$ and $f_- :B_-\longrightarrow \mathbb{R}$ be Lipschitz .
Asume that $f_{+}=f_{-}$ on $B_{+}\cap\left\{x^2=0\right\}$. Then $f_+\cup f_{-}$ is also Lipschitz.
\end{lemma}
\begin{proof}
Let $x=\left(x^1,x^2\right)\in B_-$ and $y=\left(y^1,y^2\right)\in B_+$. Then we have the following inequalities
\begin{eqnarray*}
\left|\tilde{f}\left(x\right)-\tilde{f}\left(y\right)\right|&=&\left|\tilde{f}\left(x^1,x^2\right)-\tilde{f}\left(y^1,y^2\right)\right|\\
&=& \left|f\left(x^1,0\right)-f\left(x^1,x^2\right)\right|+\left|f\left(x^1,0\right)-f\left(y^1,0\right)\right|\\
&&
+\left|f\left(y^1,0\right)-f\left(y^1,y^2\right)\right|\\
&\leq& C\left|x^2\right|+C\left|y^2\right|+C\left|y^1-x^1\right|\\
&=& C\left(-x^2+y^2\right)+C\left|x^1-y^1\right|\\
&=& C\left|y^2-x^2\right|+C\left|x^1-y^1\right|\\
&\leq& 2C\left\|y-x\right\|.
\end{eqnarray*} 
\end{proof}

\subsection{An application: doubling and regularity for the Ricci flow} 
\label{regularityricciflow}
Let $g$ be a solution to the Ricci flow on $M\times\left(A,B\right)$ with $k_g\equiv 0$. Let 
$\tilde{M}$ be the double of $M$, then from the solution $g$ we obtain in an obvious way
a solution $\tilde{g}$ of the Ricci flow on $\tilde{M}\times\left(A,B\right)$. Let us show that 
$\tilde{g}$ is smooth. Pick $t_0\in\left(A,B\right)$; by Proposition \ref{metricdouble} we know 
that $\tilde{g}\left(t_0\right)$ is $C^{2,1}$, and smooth away from $\partial M$ (in this case we shall
refer as $\partial M$ to
the subset of $\tilde{M}$ corresponding to the equivalence class of $\partial M\times\left\{0\right\}$).

Consider $p\in \partial M \subset \tilde{M}$. Let $R_{\tilde{g}\left(t_0\right)}$ be the curvature of the double
at time $t=t_0$. By
our considerations, $R_{\tilde{g}\left(t_0\right)}$ is Lipschitz. Then the equation
\[
\Delta_{\tilde{g}\left(t_0\right)}u_0=R_{\tilde{g}\left(t_0\right)},
\]
has a $C^{2,\alpha}$ solution on a perhaps even smaller neighbourhood of $p$
(see \cite[Theorem 2.3]{DeTurck}). 
Notice that $e^{u_0}\tilde{g}\left(0\right)$ is flat, and hence in a coordinate system around $p$ 
we have that $e^{u_0}\tilde{g}\left(0\right)$ can be written as the euclidean metric, that we will denote again 
by $g_E$
 (another way of proving this is by using the existence of
isothermal coordinates see \cite{Chern}).
Hence, we have a solution to the Ricci flow $\tilde{g}$ with initial condition $\tilde{g}\left(t_0\right)$, 
so we have a solution, in a small neighbourhood of $p$,
to (writing $\tilde{g}=e^ug_E$)
\[
\dfrac{\partial u}{\partial t}=e^{u}\Delta_{g_E}u, \quad u\left(\cdot, t_0\right)=u_0,
\]
and $u_0$ is at least $C^{2,\alpha}$, for any $0<\alpha<1$. 
Therefore by parabolic regularity (\cite[Theorem 10.1, Chapter IV]{Lady}), $u$ is smooth, and so is the metric
$
\tilde{g}= e^{u}g_{E},
$
since $g_E$ is smooth. This means that the solution to the Ricci flow becomes smooth for $t>t_0$. 
This justifies, at least in the case of surfaces the doubling procedure: the solution
to the Ricci flow becomes smooth instantaneously, so all the results on 
long time behaviour proved for smooth solutions apply. In particular, if 
we double
an ancient solution along its totally geodesic boundary, we obtain a smooth ancient solution to the Ricci flow.

\section{A compactness result}

The results in this appendix can be found in \cite{Anderson}, Section 3.1 (see the references therein). We rewrite
them here for the convenience of the reader.

Given $s=l+\sigma$ ($l\in \mathbb{Z}^+$ and $0<\sigma<1$), $\rho>0$ and $Q\in\left(1,2\right)$, let $\mathcal{N}$
denote the class of connected Riemannian manifolds $\left(M,g\right)$ with boundary with the following properties.

(A) If $dist\left(p,\partial M\right)>\rho$, there is a neighbourhood $V$ of $p$ contained in the interior of $M$
and a coordinate chart 
\[
\varphi:B_{\frac{\rho}{2}}\left(0\right)\longrightarrow U, \quad \varphi\left(0\right)=p,
\]
such that in these coordinates
\begin{equation}
\label{condition1}
Q^{-2}\delta_{ij}\leq g_{ij}\leq Q^{2}\delta_{ij},
\end{equation}
and
\begin{equation}
\label{condition2}
\rho^s\sum_{\left|\beta\right|=l}\sup\left|x-y\right|^{-\sigma}\left|g_{ij}\left(x\right)-g_{ij}\left(y\right)\right|\leq Q-1
\end{equation}

(B) If $dist\left(p,\partial M\right)\leq \rho$ there is a neighbourhood $U$ of $p$ and a coordinate chart
\[
\varphi:B^+_{4\rho}\left(0\right)\longrightarrow U, \quad \varphi\left(0\right)=p,
\]
such that $\left\{x_2=0\right\}$ maps to $\partial M$ and (\ref{condition1})-(\ref{condition2}) hold in these coordinates.

Let $\mathcal{N}_{*}\left(s,\rho,Q\right)$ denote the class of pointed manifolds $\left(M,g,p\right)$ with $p\in M$,
satisfying these properties. Then we have the following compactness result.
\begin{theorem}
\label{fundamentalcompactness}
Given $s,\rho\in\left(0,\infty\right)$, the class $\mathcal{N}_{*}\left(s,\rho,Q\right)$ is compact
in $\mathcal{N}_{*}\left(s',\rho,Q\right)$ in the pointed $C^{s'}$-topology for all $s'<s$.
\end{theorem}
 

\begin{thebibliography}{}
\bibitem[1]{AlexanderBishop0}
 Alexander, S. B.; Bishop, R. L. 
Comparison theorems for curves of bounded geodesic curvature in metric spaces of curvature bounded above. Differential Geom. Appl. 6 (1996), no. 1, 
67--86.

\bibitem[2]
{AlexanderBishop}
 Alexander, S. B.; Bishop, R. L. 
 Gauss equation and injectivity radii for subspaces in spaces of curvature bounded above. Geom. Dedicata 117 (2006), 65--84.

\bibitem[3]{Anderson}
Anderson, M.; Katsuda, A.; Kurylev, Y.; Lassas, M.; Taylor, M.
Boundary regularity for the Ricci equation, geometric convergence, and Gelfand's inverse boundary problem. 
Invent. Math. 158 (2004), no. 2, 261?321. 

\bibitem[4]
{Brendle} Brendle, S. Curvature flows on surfaces with boundary. Math. Ann. 324 (2002), no. 3, 491--519. 

\bibitem[5]{CheegerGromoll}
Cheeger, J.; Gromoll, D. On the structure of complete manifolds of nonnegative curvature. 
Ann. of Math. (2) 96 (1972), 413--443.

\bibitem[6]
{Chern} Chern, S. S. An elementary proof of the existence of isothermal parameters on a surface. Proc. Amer. Math. Soc. 6 (1955), 771--782

\bibitem[7]{Cherrier}
 Cherrier, P. Probl{\`e}mes de Neumann non lin{\'e}aires sur les vari{\'e}t{\'e}s riemanniennes. (French) [Nonlinear Neumann problems on Riemannian manifolds]
 J. Funct. Anal. 57 (1984), no. 2, 154--206.

\bibitem[8]{Chow0}  Chow, B. The Ricci flow on the 2-sphere. J. Differential Geom. 33 (1991), no. 2, 325--334.

\bibitem[9]{Chow1}
 Chow, B.; Knopf, D. The Ricci flow: an introduction. Mathematical Surveys and Monographs, 110. American Mathematical Society, Providence, RI, 2004. xii+325 pp. 

\bibitem[10]{Chow2}
 Chow, B.; Chu, S-C.; Glickenstein, D.; Guenther, Ch.; Isenberg, J.; Ivey, T.; Knopf, D.; Lu, P.; Luo, F.; Ni, L. 
The Ricci flow: techniques and applications. Part II. Analytic aspects. Mathematical Surveys and Monographs, 144. American Mathematical Society, 
Providence, RI, 2008. xxvi+458 pp.

\bibitem[11]{Chow3}  Chow, B.; Lu, P.; Ni, L. Hamilton's Ricci flow.
 Graduate Studies in Mathematics, 77. American Mathematical Society, Providence, RI; Science Press, New York, 2006. xxxvi+608 pp.

\bibitem[12]{Cortissoz1} 
Cortissoz, J. C. The Ricci flow on the two-ball with a rotationally symmetric metric.Russian Math. (Iz. VUZ) 51 (2007), no. 12, 30--51.
 
\bibitem[13]{Cortissoz2}
Cortissoz, J. C. Three-manifolds of positive curvature and convex weakly umbilic boundary. Geom. Dedicata 138 (2009), 83--98.

\bibitem[14]{PhD} Cortissoz, J. C. On the Ricci flow in rotationally symmetric manifolds with boundary.
PhD Thesis. Cornell University, 2004.

\bibitem[15]{DeTurck}  DeTurck, Dennis M. Existence of metrics with prescribed Ricci curvature: local theory. Invent. Math. 65 (1981/82), no. 1, 
179--207.

\bibitem[16]{Escobar}  Escobar, Jos\'e F. Conformal deformation of a Riemannian metric to a scalar flat metric with constant mean curvature on the boundary. 
Ann. of Math. (2) 136 (1992), no. 1, 1--50.

\bibitem[17]{Gianniotis} Gianniotis, P. The Ricci flow on manifolds with boundary. arXiv:1210.0813 [math.DG]. 

\bibitem[18]{Hamilton0} Hamilton, R. S.
The Ricci flow on surfaces. Mathematics and general relativity (Santa Cruz, CA, 1986), 237--262, Contemp. Math., 71, Amer. Math. Soc., Providence, RI, 1988.
 
\bibitem[19]{Hamilton}
 Hamilton, R. S. A compactness property for solutions of the Ricci flow. Amer. J. Math. 117 (1995), no. 3, 545--572.

\bibitem[20]{Hamilton2} Hamilton, R. S. The formation of singularities in the Ricci flow. 
Surveys in differential geometry, Vol. II (Cambridge, MA, 1993), 7-- 136, Int. Press, Cambridge, MA, 1995.

\bibitem[21]{Hsu}
 Hsu, S.-u. Large time behaviour of solutions of the Ricci flow equation on $R^2$. Pacific J. Math. 197 (2001), no. 1, 25--41.
 
\bibitem[22]{KleinerLott}
Kleiner, B. ; Lott, J. Notes on Perelman's papers. Geom. Topol. 12 (2008), no. 5, 2587--2855.

\bibitem[23]{Kodani} Kodani, S. Convergence theorem for Riemannian manifolds with boundary. 
Compositio Math. 75 (1990), no. 2, 171--192.

\bibitem[24]{Kronwith}
Kronwith, S. Convex manifolds of nonnegative curvature. J. Differential Geom. 14 (1979), no. 4, 621--628.

\bibitem[25]{Lady}
 Ladyzenskaya,  O. A., Solonnikov V.A. and Uraltceva N.N., Linear and quasilinear equations of parabolic type, Transl. Math. Mono., 
23, Amer. Math. Soc., Providence, R.I., 1968.

\bibitem[26]{Lieberman}
 Lieberman, Gary M.; Trudinger, Neil S. Nonlinear oblique boundary value problems for nonlinear elliptic equations. Trans. Amer. Math. Soc. 295 (1986), 
no. 2, 509--546.

\bibitem[27]{Perelman} Perelman, G. The entropy formula for the Ricci flow and its geometric applications. 
 arXiv:math/0211159v1.

\bibitem[28]{Pulemotov} Pulemotov, A. Quasilinear Parabolic equations and the Ricci flow
on manifolds with boundary. arXiv:1012.2941v3. 

\bibitem[29]{Shen} Shen, Y. On Ricci deformation of a Riemannian metric on manifold with boundary. Pacific J. Math. 173 (1996), no. 1, 203--221.


\end{thebibliography}
\end{document}